\newcommand{\dbar}{\ensuremath{\overline\partial}}
\newcommand{\C}{\ensuremath{\mathbb{C}}}
\newcommand{\sumprime}{\if@display\sideset{}{'}\sum%
            \else\sum'\fi}
\begin{document}

\numberwithin{equation}{section}

\newtheorem{theorem}{Theorem}[section]
\newtheorem{proposition}[theorem]{Proposition}
\newtheorem{conjecture}[theorem]{Conjecture}
\def\theconjecture{\unskip}
\newtheorem{corollary}[theorem]{Corollary}
\newtheorem{lemma}[theorem]{Lemma}
\newtheorem{observation}[theorem]{Observation}
\newtheorem{definition}{Definition}
\numberwithin{definition}{section} 
\newtheorem{remark}{Remark}
\def\theremark{\unskip}
\newtheorem{kl}{Key Lemma}
\def\thekl{\unskip}
\newtheorem{question}{Question}
\def\thequestion{\unskip}
\newtheorem{example}{Example}
\def\theexample{\unskip}
\newtheorem{problem}{Problem}


\address{DEPARTMENT OF MATHEMATICAL SCIENCES, NORWEGIAN UNIVERSITY OF SCIENCE AND TECHNOLOGY, NO-7491 TRONDHEIM, NORWAY}
\email{xu.wang@ntnu.no}

\title[Hodge theory]{Notes on variation of Lefschetz star operator and $T$-Hodge theory}

 \author{Xu Wang}
\date{\today}

\begin{abstract} These notes were written to serve as an easy reference for \cite{Wang-AF}. All the results in this presentation are well-known (or quasi-well-known) theorems in Hodge theory. Our main purpose was to give a unified approach based on a variation formula of  the Lefschetz star operator, following \cite{Wang-k}. It fits quite well with Timorin's $T$-Hodge theory, i.e. the Hodge theory on the space of differential forms divided by $T$ (i.e. forms like $T\wedge u$), where $T$ is a finite wedge product of K\"ahler forms.
\bigskip



\end{abstract}
\maketitle



\tableofcontents

\section{Preliminaries}

\subsection{Primitivity: linear setting} Let $V$ be an $N$-dimensional real vector space. Let $\omega$ be a bilinear form on $V$. We call $\omega$ a \emph{symplectic form} if $\omega$ is non-degenerate and $\omega\in \wedge^2V^*$, 
i.e. $\omega(u,v)=-\omega(v,u), \ \forall  \ u, v\in V$.

\begin{proposition} Assume that there is a symplectic form $\omega$ on $V$. Then $N=2n$ for some integer $n$ and there exists a base, say $\{e^*_1, f^*_1; \cdots; e^*_n, f^*_n\}$, of $V^*$ such that 
$$
\omega=\sum_{j=1}^n e^*_j\wedge f^*_j.
$$
\end{proposition}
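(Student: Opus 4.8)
The plan is to argue by induction on $N$, using the standard symplectic Gram--Schmidt (Darboux) procedure. If $N=0$ there is nothing to prove, so assume $N\geq 1$ and pick any nonzero $e_1\in V$. Since $\omega$ is non-degenerate there is some $v\in V$ with $\omega(e_1,v)\neq 0$; after rescaling we obtain $f_1\in V$ with $\omega(e_1,f_1)=1$. Because $\omega$ is alternating, $\omega(e_1,e_1)=0\neq 1=\omega(e_1,f_1)$, so $e_1$ and $f_1$ are linearly independent; set $W=\mathrm{span}\{e_1,f_1\}$.

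Next I would establish the splitting $V=W\oplus W^{\perp}$, where $W^{\perp}:=\{v\in V:\omega(v,e_1)=\omega(v,f_1)=0\}$ is the $\omega$-orthogonal complement. The point is that $v\mapsto \omega(v,f_1)\,e_1-\omega(v,e_1)\,f_1$ is a projection of $V$ onto $W$ whose kernel is exactly $W^{\perp}$: using the alternating property one checks it restricts to the identity on $W$, and by construction $v$ minus its image lies in $W^{\perp}$, while $W\cap W^{\perp}=0$ because $\omega|_{W}$ is non-degenerate. Moreover $\omega$ restricts to a symplectic form on $W^{\perp}$: if $w\in W^{\perp}$ satisfies $\omega(w,W^{\perp})=0$, then, since also $\omega(w,W)=0$, we get $\omega(w,V)=0$, hence $w=0$ by non-degeneracy of $\omega$ on $V$.

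Since $\dim W^{\perp}=N-2$, the induction hypothesis applies to $(W^{\perp},\omega|_{W^{\perp}})$, giving $N-2=2(n-1)$, hence $N=2n$, together with vectors $e_2,f_2,\dots,e_n,f_n\in W^{\perp}$ forming a symplectic basis there. Concatenating with $e_1,f_1$ yields a basis $\{e_1,f_1,\dots,e_n,f_n\}$ of $V$ with $\omega(e_i,e_j)=\omega(f_i,f_j)=0$ and $\omega(e_i,f_j)=\delta_{ij}$. Finally I would pass to the dual basis $\{e^*_1,f^*_1,\dots,e^*_n,f^*_n\}$ of $V^*$ and verify the claimed identity $\omega=\sum_{j=1}^n e^*_j\wedge f^*_j$ by evaluating both bilinear forms on all pairs of basis vectors of $V$; they agree on every such pair, so they coincide.

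I expect no serious obstacle: the only step requiring a little care is checking that $\omega$ remains non-degenerate after restriction to $W^{\perp}$ (equivalently, that $V=W\oplus W^{\perp}$ is a genuine direct sum), which is precisely what makes the induction close. Everything else is bookkeeping with the alternating property and the dual basis, and the evenness of $N$ falls out of the induction for free.
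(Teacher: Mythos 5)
Your proof is correct and follows essentially the same route as the paper: induct on $N$, split off a two-dimensional subspace $\mathrm{Span}\{e_1,f_1\}$ with $\omega(e_1,f_1)=1$ via the projection $u\mapsto \omega(u,f_1)e_1-\omega(u,e_1)f_1$, and check that $\omega$ restricts to a symplectic form on the $\omega$-orthogonal complement. The only differences are cosmetic (base case $N=0$ versus $N=2$, and a contrapositive phrasing of the non-degeneracy of the restriction).
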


\begin{proof} Since $\omega$ is non-degenerate, we know that $N\geq 2$. If $N=2$ and $\omega(e,f)=1$ then
$$
\omega=e^*\wedge f^*, 
$$
where $\{e^*, f^*\}$ is the dual base of $\{e, f\}$. Assume that $N\geq 3$, consider
$$
V':=\{u\in V: \omega(u, e)=\omega(u, f)=0\}.
$$
Then for every $u\in V$, we have
$$
u':=u-\omega(u,f) e+\omega(u,e)f\in V',
$$
and 
$$
ae+bf\in  V' \ \text{iff} \ a=b=0,
$$
thus 
$$
V=V'\oplus {\rm Span}\{e,f\}.
$$
Since $\omega$ is non-degenerate, we know that for every $v\in V'$, there exists $u\in V$ such that $\omega(u, v)\neq 0$. Thus
$$
\omega(u', v)=\omega(u,v)\neq 0,
$$
which implies that $\omega|_{V'}$ is a symplectic form on $V'$. Thus the theorem follows by induction on $N$.
\end{proof}

One may use $\omega$ to define a bilinear form, say $\omega^{-1}$, on $V^*$ such that
$$
\omega^{-1}(f_j^*, e^*_k)=-\omega^{-1}(e^*_k, f_j^*)=\delta_{jk}, \ \omega^{-1}(f_j^*, f^*_k)=\omega^{-1}(e^*_j, e^*_k)=0.
$$
Let $T_\omega: V\to V^*$ be the linear isomorphism defined by
$$
T_\omega(u) (v)=\omega(v,u), \ \forall \ u,v\in V.
$$ 
Then we have
$$
T^{-1}_\omega=T_{\omega^{-1}},
$$
thus the definition of $\omega^{-1}$ does not depend on the choice of bases in the above proposition. We shall also use $\omega^{-1}$ to denote the following bilinear form on $\wedge^p V^*$:
\begin{equation}
\omega^{-1}(\mu, \nu):=\det(\omega^{-1}(\alpha_i, \beta_j)), \ \mu=\alpha_1\wedge \cdots \wedge \alpha_p, \ \nu=\beta_1\wedge \cdots \wedge \beta_p.
\end{equation} 

\begin{definition}[By Guillemin \cite{Guillemin-notes}] The symplectic star operator
$ *_s:  \wedge ^p V^* \to  \wedge ^{2n-p} V^*$ is defined by 
\begin{equation}
\mu\wedge *_s \nu=\omega^{-1}(\mu, \nu) \frac{\omega^n}{n!}.
\end{equation}
\end{definition}

The following theorem is the key to decode the structure of $*_s$.

\begin{theorem}[Hard Lefschetz theorem]\label{th:hlt-1} For each $0\leq k\leq n$, 
$$
u \mapsto \omega^{n-k} \wedge u, \ u\in \wedge^k V^*, 
$$
defines an isomorphism between $\wedge^k V^*$ and $\wedge^{2n-k} V^*$. 
\end{theorem}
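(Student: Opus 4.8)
The plan is to upgrade the whole exterior algebra $\wedge^\bullet V^*=\bigoplus_{p=0}^{2n}\wedge^p V^*$ to a finite-dimensional representation of $\mathfrak{sl}_2$ and then read the statement off from the representation theory of $\mathfrak{sl}_2$. I would introduce three operators on $\wedge^\bullet V^*$: the \emph{Lefschetz operator} $L$, defined by $L(u)=\omega\wedge u$, which raises the degree by $2$; the \emph{weight operator} $H$, acting on $\wedge^p V^*$ as the scalar $p-n$; and the \emph{dual Lefschetz operator} $\Lambda$, the contraction against $\omega^{-1}$ — in a base as in the Proposition above one writes $\Lambda=\sum_{j=1}^n\iota_{f_j}\iota_{e_j}$, which lowers the degree by $2$ and, like $\omega^{-1}$, is independent of the base via the isomorphism $T_\omega$. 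The relations $[H,L]=2L$ and $[H,\Lambda]=-2\Lambda$ are immediate, since $L$ and $\Lambda$ shift the grading by $\pm2$ while $H$ records the grading shifted by $-n$.

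The one relation needing genuine work is $[L,\Lambda]=H$. Since $\omega=\sum_j e^*_j\wedge f^*_j$ and $\omega^{-1}$ split as sums over the symplectic planes $P_j=\mathrm{Span}\{e^*_j,f^*_j\}$, one has $L=\sum_j L_j$ and $\Lambda=\sum_j\Lambda_j$, where $L_j,\Lambda_j$ involve only $e^*_j,f^*_j$ and therefore commute with $L_k,\Lambda_k$ for $k\neq j$; hence $[L,\Lambda]=\sum_j[L_j,\Lambda_j]$ and the identity reduces to the single-plane case $n=1$. There $\wedge^\bullet V^*$ is the $4$-dimensional space spanned by $1,e^*,f^*,e^*\wedge f^*$, and a one-line computation shows $[L,\Lambda]$ acts on this ordered base as $H=\mathrm{diag}(-1,0,0,1)$. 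Thus $(L,H,\Lambda)$ is an $\mathfrak{sl}_2$-triple, so $\wedge^\bullet V^*$ is an $\mathfrak{sl}_2$-module in which $\wedge^p V^*$ is precisely the $H$-eigenspace of eigenvalue $p-n$.

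Now I would invoke the classification of finite-dimensional $\mathfrak{sl}_2$-modules: every such module is a direct sum of irreducibles $V(m)$ (with weights $-m,-m+2,\dots,m$, each of multiplicity one), and on any such module, for every integer $i\le 0$, the operator $L^{-i}$ restricts to an isomorphism from the weight-$i$ eigenspace onto the weight-$(-i)$ eigenspace — on each $V(m)$ these two eigenspaces are at most one-dimensional and the standard raising-operator computation makes $L^{-i}$ invertible between them, and one sums over the irreducible summands. Applying this with $i=k-n\le 0$ for $0\le k\le n$ shows that $L^{n-k}$, i.e. the map $u\mapsto\omega^{n-k}\wedge u$, carries $\wedge^k V^*$ (the weight-$(k-n)$ eigenspace) isomorphically onto the weight-$(n-k)$ eigenspace, which is $\wedge^{2n-k}V^*$. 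This is exactly the assertion.

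The main obstacle is the single relation $[L,\Lambda]=H$; everything else is either trivial grading bookkeeping or a citation of $\mathfrak{sl}_2$-theory, and I would dispatch it by the plane-by-plane reduction just described. As an alternative to the representation-theoretic finish, one can argue by induction on $n$ directly, using the splitting $V=V'\oplus\mathrm{Span}\{e,f\}$ from the Proposition together with the vanishing of the square of the symplectic form on the two-dimensional summand; this avoids $\mathfrak{sl}_2$ entirely, at the cost of a longer combinatorial computation.
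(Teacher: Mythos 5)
Your proposal is correct, but it takes a genuinely different route from the paper. The paper proves the theorem by a direct induction on $n$: it splits $\omega=\omega'+e_{l+1}^*\wedge f_{l+1}^*$, expands $u=u^0+u^1\wedge e_{l+1}^*+u^2\wedge f_{l+1}^*+u^3\wedge e_{l+1}^*\wedge f_{l+1}^*$, and deduces injectivity of $\omega^{l+1-k}\wedge\cdot$ from the induction hypothesis applied to $u^0,u^1,u^2,u^3$, finishing by equality of dimensions. This is essentially the ``longer combinatorial computation'' you mention as an alternative at the end. Your main argument instead builds the $\mathfrak{sl}_2$-triple $(L,H,\Lambda)$, reduces the key relation $[L,\Lambda]=H$ to the single symplectic plane by the tensor decomposition $\wedge^\bullet V^*=\bigotimes_j\wedge^\bullet P_j^*$ (the sign bookkeeping is harmless since each $L_j$ has even degree), and then quotes the weight-space structure of finite-dimensional $\mathfrak{sl}_2$-modules. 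Both are complete proofs; the trade-off is that the paper's induction is self-contained and elementary (it uses nothing beyond the normal form of $\omega$), whereas your argument imports the classification of $\mathfrak{sl}_2$-representations but in exchange delivers more --- the operators $L,\Lambda,B$ and the commutation relations you establish are precisely the $sl_2$-triple the paper introduces later (Proposition \ref{pr:lamba-b}), and the primitive decomposition of Theorem \ref{th: Lef-d-1} falls out of the same representation-theoretic picture for free. One small point to keep in mind: your $\Lambda$ is defined as contraction against $\omega^{-1}$, while the paper defines $\Lambda=*_s^{-1}L*_s$; these agree (both are the $\omega^{-1}$-adjoint of $L$), but if you wanted to splice your argument into the paper's development you would need to note that identification explicitly.
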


\begin{proof} Notice that the theorem is true if $n=1$ or $k=0, n$. Now assume that it is true for $n\leq l$, $l\geq 1$. We need to prove that it is true for $n=l+1$, $1\leq k\leq l$. Put
$$
\omega'=\sum_{j=1}^l e_j^* \wedge f_j^*.
$$
Then we have
$$
\omega^{l+1-k}=(\omega')^{l+1-k} +(l+1-k)(\omega')^{l-k}\wedge  e_{l+1}^* \wedge f_{l+1}^*.
$$
Let us write $u\in \wedge^k(V^*)$ as
$$
u=u^0+u^1\wedge e_{l+1}^* +u^2\wedge f_{l+1}^*+u^3\wedge  e_{l+1}^* \wedge f_{l+1}^*,
$$
where each $u^j$ contains no $e_{l+1}^*$ or $f_{l+1}^*$ term.
Then $\omega^{l+1-k} \wedge u=0$ is equivalent to
$$
(\omega')^{l+1-k}  \wedge u^0=(\omega')^{l+1-k}  \wedge u^1=(\omega')^{l+1-k}  \wedge u^2=(\omega')^{l+1-k}  \wedge u^3+(l+1-k) (\omega')^{l-k}\wedge u^0=0,
$$
which implies $u^1=u^2=0$ by our theorem for $n=l$. Moreover, $u^3=0$ since
$$
(\omega')^{l+2-k}  \wedge u^3=(\omega')^{l+2-k}  \wedge u^3+(l+1-k) (\omega')^{l-k+1}\wedge u^0=0.
$$
Thus $ (\omega')^{l-k}\wedge u^0=0$, which implies $u^0=0$. Now we know that $u\mapsto \omega^{l+1-k}\wedge u$ is an injection, thus an isomorphism since  $\dim \wedge^k V^*=\dim \wedge^{2n-k} V^*$. 
\end{proof}

The key notion in these notes is the following:

\begin{definition}\label{de:primitive} We call $u\in \wedge^k V^*$ a primitive form if $k\leq n$ and $\omega^{n-k+1}\wedge u = 0$.
\end{definition}

The following Lefschetz decomposition theorem follows directly from Theorem \ref{th:hlt-1} (see the proof of Theorem \ref{th:Lef} in the next section).

\begin{theorem}[Lefschetz decomposition formula]\label{th: Lef-d-1} Every $u\in \wedge^k V^*$ has a unique decomposition as follows:
\begin{equation}
u=\sum \omega_r \wedge u^r, \ \omega_r:=\frac{\omega^r}{r!},
\end{equation}
where each $u^r$ is a primitive $(k-2r)$-form.
\end{theorem}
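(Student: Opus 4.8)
The plan is to deduce both existence and uniqueness directly from the Hard Lefschetz isomorphism (Theorem~\ref{th:hlt-1}), by induction on the degree $k$. Finiteness of the sum and the uniqueness statement hinge on agreeing that $r$ runs over $\max(0,k-n)\le r\le\lfloor k/2\rfloor$: note that for $r<k-n$ one has $\omega_r\wedge u^r=0$ automatically for every primitive $(k-2r)$-form $u^r$, so such indices would only clutter the bookkeeping. The case $k>n$ I would dispose of at the outset: by Theorem~\ref{th:hlt-1} there is a unique $w\in\wedge^{2n-k}V^*$ with $u=\omega^{\,k-n}\wedge w$, and since $2n-k<n$ the decomposition of $w$ pushes forward to one of $u$ (with indices shifted by $k-n$ and the factorials rescaled via $\omega^{\,k-n}\wedge\omega_s=\tfrac{(k-n+s)!}{s!}\,\omega_{k-n+s}$). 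So from here on I assume $k\le n$.

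For existence I induct on $k$. If $k\le1$, then $\omega^{\,n-k+1}\wedge u$ lies in $\wedge^{2n-k+2}V^*=0$, so $u$ is already primitive and $u^0:=u$ works. If $2\le k\le n$, then Theorem~\ref{th:hlt-1} applied in degree $k-2$ says that $v\mapsto\omega^{\,n-k+2}\wedge v$ is an isomorphism $\wedge^{k-2}V^*\to\wedge^{2n-k+2}V^*$; I pick the unique $v\in\wedge^{k-2}V^*$ with $\omega^{\,n-k+2}\wedge v=\omega^{\,n-k+1}\wedge u$ and set $u^0:=u-\omega\wedge v$. Then $\omega^{\,n-k+1}\wedge u^0=0$, so $u^0$ is a primitive $k$-form; applying the induction hypothesis to $v$ and using $\omega\wedge\omega_s=(s+1)\,\omega_{s+1}$ gives $u=u^0+\sum_{r\ge1}\omega_r\wedge u^r$ with each $u^r$ a primitive $(k-2r)$-form.

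For uniqueness it is enough to show that $\sum_r\omega_r\wedge u^r=0$, with $u^r$ primitive of degree $k-2r$ and $r\ge\max(0,k-n)$, forces all $u^r=0$. Suppose not, and let $r_1$ be the largest index with $u^{r_1}\ne0$. I would wedge the relation with $\omega^{\,n-k+r_1}$ (a nonnegative power, since $r_1\ge\max(0,k-n)\ge k-n$) and use that primitivity of $u^r$ gives $\omega^{\,j}\wedge u^r=0$ whenever $j\ge n-(k-2r)+1$: for every $r<r_1$ the exponent $n-k+r_1+r$ meets this bound, so those terms drop out, while the surviving term is $\tfrac{1}{r_1!}\,\omega^{\,n-(k-2r_1)}\wedge u^{r_1}$, which is nonzero because $u^{r_1}\ne0$ and $\omega^{\,n-(k-2r_1)}\wedge(\cdot)$ is injective on $\wedge^{k-2r_1}V^*$ by Theorem~\ref{th:hlt-1}. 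This contradiction finishes the argument.

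The only real difficulty here is organizational: threading the index ranges and the normalization $\omega_r=\omega^r/r!$ through the induction, and handling $k>n$ cleanly, where the constraint $k-2r\le n$ built into the notion of a primitive form is exactly what rescues finiteness and uniqueness. All the substance is already contained in Theorem~\ref{th:hlt-1} — the injectivity of $\omega^{\,n-p}\wedge(\cdot)$ on $\wedge^p V^*$ for $0\le p\le n$ — so the theorem genuinely ``follows directly'' from it, with no further analysis required.
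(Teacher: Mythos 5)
Your proof is correct and follows essentially the same route as the paper's (given as the proof of Theorem~\ref{th:Lef}): reduce to $k\le n$ via the Hard Lefschetz isomorphism, peel off the primitive piece $u^0=u-\omega\wedge v$ using the isomorphism $\omega^{\,n-k+2}\wedge(\cdot)$ in degree $k-2$, and prove uniqueness by wedging with a high power of $\omega$ to isolate the top term. Your extra care with the index range for $k>n$ is a sound refinement of bookkeeping the paper leaves implicit.
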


By the above theorem, it is enough to study the symplectic star operator on $\omega_r \wedge u$, where $u$ is primitive. 

\begin{theorem}\label{th: star-pri} If $u$ is a primitive $k$-form then $*_s(\omega_r \wedge u)=(-1)^{k+\cdots+1} \omega_{n-k-r} \wedge u$.
\end{theorem}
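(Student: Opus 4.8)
The plan is to organize the argument around the Lefschetz operator $L\colon u\mapsto\omega\wedge u$ and its adjoint $\Lambda$ with respect to the bilinear form $\omega^{-1}$ on $\wedge^{\bullet}V^{*}$, defined by $\omega^{-1}(Lu,w)=\omega^{-1}(u,\Lambda w)$; it lowers the degree by $2$. Working in the normal coordinates of the Proposition one checks the $\mathfrak{sl}_{2}$-relation $[\Lambda,L]=H$, with $H$ acting on $\wedge^{k}V^{*}$ by the scalar $n-k$, and deduces from Theorems \ref{th:hlt-1} and \ref{th: Lef-d-1} that a form of degree $\le n$ is primitive in the sense of Definition \ref{de:primitive} if and only if it lies in $\ker\Lambda$; in particular primitive forms are $\omega^{-1}$-orthogonal to $\mathrm{im}\,L$. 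Iterating $[\Lambda,L]=H$ yields, for primitive $u$ of degree $k$, the standard identity $\Lambda L^{m}u=m(n-k-m+1)L^{m-1}u$, whence $\Lambda^{s}L^{r}u=0$ for $s>r$ and $\Lambda^{r}L^{r}u=\frac{r!\,(n-k)!}{(n-k-r)!}\,u$; recall also $\omega_{r}\wedge u=\tfrac1{r!}L^{r}u$ and $\omega_{r}\wedge\omega_{n-k-r}=\binom{n-k}{r}\omega_{n-k}$.

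Next I would reduce the statement to its ``$r=0$'' core. Both sides of the asserted formula are $\mathbb{R}$-linear in $u$ and lie in $\wedge^{2n-k-2r}V^{*}$, so by the (perfect) wedge pairing and the defining property of $*_{s}$ it suffices to prove, for every $\mu\in\wedge^{k+2r}V^{*}$ (for $\mu$ of other degree both sides vanish),
\begin{equation*}
\mu\wedge\bigl((-1)^{k(k+1)/2}\,\omega_{n-k-r}\wedge u\bigr)=\omega^{-1}\bigl(\mu,\ \omega_{r}\wedge u\bigr)\,\frac{\omega^{n}}{n!}.
\end{equation*}
By Theorem \ref{th: Lef-d-1} we may take $\mu=\omega_{s}\wedge v$ with $v$ primitive of degree $j=k+2r-2s$. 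When $s>r$ both sides are $0$: on the left $\omega^{n-k+1}\wedge u=0$ forces $\omega^{n-k-r+s}\wedge u=0$, on the right $\Lambda^{s}L^{r}u=0$. When $s<r$ both sides are $0$: on the left $\omega^{n-j+1}\wedge v=0$ forces $\omega^{n-k-r+s}\wedge v=0$, on the right $\Lambda v=0$ and the $\omega^{-1}$-orthogonality of $v$ to $\mathrm{im}\,L$ kill $\omega^{-1}(v,\Lambda^{s}L^{r}u)$. When $s=r$ (so $j=k$), substituting $\Lambda^{r}L^{r}u$ and $\omega_{r}\wedge\omega_{n-k-r}$ makes both sides multiples of $\binom{n-k}{r}\frac{\omega^{n}}{n!}$, and the identity collapses to
\begin{equation*}
\omega^{-1}(v,u)=(-1)^{k(k+1)/2}\,e(v,u),\qquad\text{where}\quad \omega_{n-k}\wedge v\wedge u=:e(v,u)\,\frac{\omega^{n}}{n!},
\end{equation*}
for all primitive $v,u\in\wedge^{k}V^{*}$. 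The two sides are $Sp(\omega)$-invariant bilinear forms on the space $P^{k}$ of primitive $k$-forms, of the same symmetry type (symmetric for $k$ even, alternating for $k$ odd).

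To finish I would prove this last identity. Since $*_{s}$, $L$, $\omega^{-1}$ and $e$ are all $Sp(\omega)$-equivariant and $P^{k}$ is a nonzero $Sp(\omega)$-submodule of $\wedge^{k}V^{*}$ on which neither form vanishes identically, it suffices --- granting that $P^{k}$ is an \emph{irreducible} $Sp(\omega)$-module, so that an invariant bilinear form on it is unique up to scalar --- to compare $\omega^{-1}$ and $e$ on a single pair. I would take $v=e_{1}^{*}\wedge\cdots\wedge e_{k}^{*}$ and $u=f_{1}^{*}\wedge\cdots\wedge f_{k}^{*}$, each primitive because wedging it with $\omega^{n-k+1}$ must repeat some $e_{i}^{*}$, resp.\ $f_{i}^{*}$. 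One gets immediately $\omega^{-1}(v,u)=\det(-\delta_{ij})=(-1)^{k}$; and expanding $\omega^{n-k}$, keeping the only surviving term $\prod_{i>k}e_{i}^{*}\wedge f_{i}^{*}$ and straightening $e_{1}^{*}\wedge\cdots\wedge e_{k}^{*}\wedge f_{1}^{*}\wedge\cdots\wedge f_{k}^{*}$ into $e_{1}^{*}\wedge f_{1}^{*}\wedge\cdots\wedge e_{k}^{*}\wedge f_{k}^{*}$ gives $e(v,u)=(-1)^{k(k-1)/2}$. Since $\tfrac{k(k+1)}{2}+\tfrac{k(k-1)}{2}=k^{2}\equiv k\pmod 2$, the proportionality constant is exactly $(-1)^{k(k+1)/2}$, as required.

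The step I expect to be the real obstacle is this very last input: identifying $\omega^{-1}|_{P^{k}}$ with a scalar multiple of $e|_{P^{k}}$. Invoking irreducibility of $P^{k}$ makes it painless but is a somewhat heavier tool than the rest of these notes; the self-contained alternative is to prove $\omega^{-1}(v,u)=(-1)^{k(k+1)/2}e(v,u)$ for primitive $v,u$ directly by induction on $n$, splitting $V=V'\oplus\mathrm{Span}\{e_{n},f_{n}\}$ and unravelling primitivity just as in the proof of Theorem \ref{th:hlt-1}. Keeping track of the signs in that induction --- and making sure the exponent that appears is $k(k+1)/2$ and not a neighbouring value --- is where the care is needed.
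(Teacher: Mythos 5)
Your reduction is correct and genuinely different from the paper's argument, but, as you yourself anticipate, the crux is left unproven. Everything up to the final bilinear-form identity checks out: defining $\Lambda$ as the $\omega^{-1}$-adjoint of $L$, verifying $[\Lambda,L]=n-k$ on $\wedge^kV^*$ in a symplectic basis (you cannot use Proposition \ref{pr:lamba-b}, which the paper deduces from this very theorem, but your independent derivation is legitimate), obtaining $\Lambda L^{m}u=m(n-k-m+1)L^{m-1}u$ for primitive $u$, and testing the candidate $(-1)^{k(k+1)/2}\omega_{n-k-r}\wedge u$ against $\mu=\omega_s\wedge v$ via the perfect wedge pairing: the cases $s>r$, $s<r$, $s=r$ are all handled correctly, and the sign bookkeeping on the pair $v=e_1^*\wedge\cdots\wedge e_k^*$, $u=f_1^*\wedge\cdots\wedge f_k^*$ is right ($\omega^{-1}(v,u)=(-1)^k$, $e(v,u)=(-1)^{k(k-1)/2}$, ratio $(-1)^{k(k+1)/2}$). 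The problem is that the whole theorem has thereby been compressed into the single statement $\omega^{-1}(v,u)=(-1)^{k(k+1)/2}e(v,u)$ on primitive $k$-forms --- exactly the $r=s=0$ case --- and the only argument offered for it is irreducibility of $P^k$ under $Sp(\omega)$. That fact is true (it is Weyl's description of the fundamental representations of the symplectic group), but it is nowhere proved here, it is considerably heavier than anything else in these notes, and proving it is essentially as hard as the theorem itself; there is also the minor point that Schur's lemma bounds the space of invariant bilinear forms only after complexifying $P^k$, after which one must still check that the proportionality constant is real. So as written the proof has a genuine hole at its one load-bearing step.

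The paper closes exactly this hole with two elementary lemmas you could substitute for irreducibility: the Berndtsson lemma, which shows that primitive forms are spanned by \emph{elementary} forms $e_1^*\wedge\cdots\wedge e_k^*$ (proved by the same induction on $n$, splitting off a hyperbolic plane, that you sketch as your ``self-contained alternative''), and the Guillemin lemma, which makes $*_s$ multiplicative under a symplectic direct sum and so reduces the computation to rank-one pieces. Note that even with the Berndtsson lemma, your pairing identity is not a one-line check, because the two arguments of $\omega^{-1}(v,u)$ are elementary with respect to possibly \emph{different} symplectic bases; the paper sidesteps this by applying $*_s$ to a single elementary $u$ rather than pairing two primitives. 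I would either import those two lemmas or actually carry out the induction you describe: the reduction you have built is sound and arguably cleaner conceptually, but the sign-sensitive base case of that induction is precisely where the content of the theorem lives.
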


The above theorem implies $*_s^2=1$. We shall use a symplectic analogy of the Berndtsson lemma (see Lemma 3.6.10 in \cite{Bo-notes}) to prove it. 

\begin{definition} $u\in \wedge^k V^*$ is said to be an elementary form if there exists a base, say $\{e^*_1, f^*_1; \cdots; e^*_n, f^*_n\}$, of $V^*$ such that 
$$
\omega=\sum_{j=1}^n e^*_j\wedge f^*_j, \ u=e_1^*\wedge \cdots\wedge e_k^*.
$$
\end{definition}

\begin{lemma}[Berndtsson lemma] The space of primitive forms is equal to the linear space spanned by elementary forms.
\end{lemma}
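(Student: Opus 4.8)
The plan is to prove, in each fixed degree $k\le n$ (both spaces split by degree), two inclusions: that every elementary $k$-form is primitive, by a one-line expansion, and that every primitive $k$-form is a sum of elementary ones, by induction on $n$. For the first, given $u=e_1^*\wedge\cdots\wedge e_k^*$ I would write $\omega=\tau+\tau'$ with $\tau=\sum_{j\le k}e_j^*\wedge f_j^*$ and $\tau'=\sum_{j>k}e_j^*\wedge f_j^*$; expanding $\omega^{n-k+1}$ binomially, a typical term $\tau^{m}\wedge(\tau')^{n-k+1-m}$ is killed by $u$, since for $m\ge1$ it repeats an $e_j^*$ with $j\le k$, while the $m=0$ term $(\tau')^{n-k+1}$ vanishes because $\tau'$ is a symplectic form on the $2(n-k)$-dimensional span of $\{e_j,f_j:j>k\}$. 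Hence $\omega^{n-k+1}\wedge u=0$, so the span $E^k$ of elementary $k$-forms lies inside the space $P^k$ of primitive $k$-forms.

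For $P^k\subseteq E^k$ I would induct on $n$. The cases $n=1$, and $k\in\{0,1\}$ for any $n$, are immediate: $P^0=E^0=\R$, and $P^1=E^1=V^*$ because every nonzero one-form is the first covector of a symplectic basis. For the inductive step, fix a symplectic basis, let $\omega''=\sum_{j\ge2}e_j^*\wedge f_j^*$ be the induced symplectic form on $V''={\rm Span}\{e_2,\ldots,f_n\}$, and write $u\in P^k$ ($2\le k\le n$) as $u=a+b\wedge e_1^*+c\wedge f_1^*+d\wedge e_1^*\wedge f_1^*$ with $a,b,c,d$ free of $e_1^*,f_1^*$. From $\omega^{n-k+1}=(\omega'')^{n-k+1}+(n-k+1)(\omega'')^{n-k}\wedge e_1^*\wedge f_1^*$, sorting $\omega^{n-k+1}\wedge u=0$ by $\{e_1^*,f_1^*\}$-type gives four equations; the $e_1^*$- and $f_1^*$-type ones say $b,c$ are primitive on $V''$, hence by induction sums of elementary $(k-1)$-forms of $V''$, and since an elementary form of $V''$ stays elementary in $V$ we get $b\wedge e_1^*,\ c\wedge f_1^*\in E^k$; subtracting these, we may assume $u=a+d\wedge e_1^*\wedge f_1^*$. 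The other two equations, fed through the Lefschetz decomposition (Theorem~\ref{th: Lef-d-1}) and Hard Lefschetz (Theorem~\ref{th:hlt-1}) on $V''$ — in particular the invertibility of $(\omega'')^{n-k+1}\wedge(\cdot)$ on $(k-2)$-forms of $V''$ — then force $a=a^0+\omega''\wedge a^1$ and $d=-(n-k+1)a^1$, with $a^0,a^1$ primitive forms of $V''$ of degrees $k$ and $k-2$.

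By induction $a^0$ and $a^1$ are sums of elementary forms of $V''$; as $a^0$ is then already a sum of elementary $k$-forms of $V$, linearity reduces the task to showing $a^1\wedge\bigl(\omega''-(n-k+1)e_1^*\wedge f_1^*\bigr)\in E^k$ for a single elementary $(k-2)$-form, which after reordering is $a^1=e_2^*\wedge\cdots\wedge e_{k-1}^*$. Wedging with $a^1$ annihilates the index-$\le(k-1)$ terms of $\omega''$, leaving $\sum_{j=k}^{n}a^1\wedge(e_j^*\wedge f_j^*-e_1^*\wedge f_1^*)$ — there are exactly $n-k+1$ indices $j$, matching the coefficient — and the identity
\[
e_j^*\wedge f_j^*-e_1^*\wedge f_1^*=(e_1^*+e_j^*)\wedge(f_j^*-f_1^*)-e_1^*\wedge f_j^*+e_j^*\wedge f_1^*
\]
then writes each summand, after wedging with $a^1$, as a sum of three elementary $k$-forms of $V$: in each term the $k$ one-forms involved span an $\omega^{-1}$-isotropic subspace (the indices $2,\dots,k-1$ in $a^1$ avoid $\{1,j\}$, and $\{e_1^*+e_j^*,f_j^*-f_1^*\}$, $\{e_1^*,f_j^*\}$, $\{e_j^*,f_1^*\}$ are each isotropic), and an $\omega^{-1}$-isotropic family of $k\le n$ independent covectors is, up to scalar, the $e$-part of a symplectic basis. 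This closes the induction, so $P^k=E^k$.

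The hard part is the inductive step, and inside it the passage from the single relation $\omega^{n-k+1}\wedge u=0$ to the precise shape $a=a^0+\omega''\wedge a^1$, $d=-(n-k+1)a^1$: that is exactly where Hard Lefschetz on the smaller space $V''$ is indispensable, both to invert $(\omega'')^{n-k+1}\wedge(\cdot)$ on $(k-2)$-forms and to rule out the higher Lefschetz components of $a$. The explicit splitting of $e_j^*\wedge f_j^*-e_1^*\wedge f_1^*$ into elementary pieces is the other ingredient; everything else is bookkeeping in the chosen basis.
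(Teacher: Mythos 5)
Your proof is correct and follows essentially the same route as the paper: both inclusions, induction on $n$ by splitting off one symplectic plane, handling the $b\wedge e_1^*$ and $c\wedge f_1^*$ terms by the inductive hypothesis, and reducing the remaining piece to the identity expressing $e_j^*\wedge f_j^*-e_1^*\wedge f_1^*$ as a sum of elementary $2$-forms. The only cosmetic differences are that you extract the shape $a=a^0+\omega''\wedge a^1$, $d=-(n-k+1)a^1$ via an explicit Lefschetz decomposition (the paper encodes the same data as primitivity of $u^3$ and of $(l+2-k)u^0+\omega'\wedge u^3$), and that you certify elementarity of the final three terms by the standard isotropic-extension lemma where the paper writes down the new symplectic basis explicitly.
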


\begin{proof} Since 
$$
\omega_{n-k+1}=\sum_{j_1< \cdots <j_{n-k+1}} e_{j_1}^*\wedge f_{j_1}^* \wedge \cdots \wedge e_{j_{n-k+1}}^*\wedge f_{j_{n-k+1}}^*,
$$
we know that $\omega_{n-k+1}\wedge u=0$ if $u$ is an elementary $k$-form. Thus every elementary form is primitive. 
Let us prove the other side by induction on $n$. Notice that the lemma is true if $n=1$. Assume that it is true for $n\leq l$, $l\geq 1$. We whall prove that it is also true for $n=l+1$. With the notation in the proof of Theorem \ref{th:hlt-1},  $\omega^{l-k+2} \wedge u=0$ is equivalent to
$$
(\omega')^{l+2-k}  \wedge u^0=(\omega')^{l+2-k}  \wedge u^1=(\omega')^{l+2-k}  \wedge u^2=(\omega')^{l+2-k}  \wedge u^3+(l+2-k) (\omega')^{l-k+1}\wedge u^0=0,
$$
which is equivalent to the $\omega'$-primitivity of $u^1, u^2, u^3$ and $(l+2-k)u^0+\omega'\wedge u^3$. Now it suffices to show that
$$
u':=u^3\wedge((l+2-k)e_{l+1}^*\wedge f_{l+1}^*-\omega')
$$ 
is a linear combination of elementary forms. Since $u^3$ is $\omega'$-primitive,  by the induction hypothesis, we can assume that
$$
u^3=e_1^*\wedge \cdots \wedge e_{k-2}^*.
$$
Thus
$$
u'=\sum_{j=k-1}^l e_1^*\wedge \cdots \wedge e_{k-2}^* \wedge (e_{l+1}^*\wedge f_{l+1}^*-e_{j}^*\wedge f_{j}^*).
$$
Now it suffices to show that if $n=2$ then $e_1^*\wedge f_1^*-e_2^*\wedge f_2^*$ is a linear combination of elementary forms. Notice that
$$
e_1^*\wedge f_1^*-e_2^*\wedge f_2^*=(e_1^*+e_2^*)\wedge(f_1^*-f_2^*)+e_1^*\wedge f_2^*+f_1^*\wedge e_2^*.
$$
It is clear that $e_1^*\wedge f_2^*$ and $f_1^*\wedge e_2^*$ are elementary. $(e_1^*+e_2^*)\wedge(f_1^*-f_2^*)$ is also elementary since we can write
$$
\omega=(e_1^*+e_2^*)\wedge f_1^*+e_2^*\wedge(f_2^*-f_1^*).
$$
The proof is complete.
\end{proof}

We shall also use the following Lemma from \cite{Guillemin-notes}.

\begin{lemma}[Guillemin Lemma] Assume that $(V,\omega)=(V_1, \omega^{(1)})\oplus(V_2, \omega^{(2)}) $. Then
$$
*_s(u\wedge v)=(-1)^{k_1k_2} *_s^1 u \wedge *_s^2 v,  \ u\in \wedge^{k_1}V_1^*, \ v\in \wedge^{k_2}V_2^*,
$$
where $*_s^1$ and $*_s^2$ are symplectic star operators on $V_1$ and $V_2$ respectively. 
\end{lemma}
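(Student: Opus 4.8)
The plan is to verify the identity by pairing both sides against forms of complementary degree and invoking the defining property of $*_s$. Write $\dim V_i=2n_i$, so that $n=n_1+n_2$, and fix a base of $V^*$ adapted to the splitting, namely the union of a symplectic base of $V_1^*$ and one of $V_2^*$ of the type produced in the first Proposition. With respect to such a base the bilinear form $\omega^{-1}$ on $V^*$ is block diagonal: $\omega^{-1}(\alpha,\beta)=0$ whenever $\alpha\in V_1^*$ and $\beta\in V_2^*$, and its restriction to $V_i^*$ is $(\omega^{(i)})^{-1}$.

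First I would record two elementary facts. (i) If $u_1,v_1\in\wedge^{k_1}V_1^*$ and $u_2,v_2\in\wedge^{k_2}V_2^*$, the matrix in the determinant defining $\omega^{-1}(u_1\wedge u_2,v_1\wedge v_2)$ is block diagonal, so
$$
\omega^{-1}(u_1\wedge u_2,\,v_1\wedge v_2)=(\omega^{(1)})^{-1}(u_1,v_1)\cdot(\omega^{(2)})^{-1}(u_2,v_2);
$$
and if the bidegrees do not match (a $V_1^*$-part of degree $\neq k_1$), that matrix has a rectangular zero block, forcing the determinant to vanish. (ii) Since $(\omega^{(1)})^j=0$ for $j>n_1$ and $(\omega^{(2)})^j=0$ for $j>n_2$, the binomial expansion of $\omega^n=(\omega^{(1)}+\omega^{(2)})^n$ collapses to the single surviving term $j=n_1$, giving
$$
\frac{\omega^n}{n!}=\frac{(\omega^{(1)})^{n_1}}{n_1!}\wedge\frac{(\omega^{(2)})^{n_2}}{n_2!}.
$$

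Next I would carry out the pairing. Set $A:=*_s(u\wedge v)$ and $B:=(-1)^{k_1k_2}\,*_s^1 u\wedge *_s^2 v$, both living in $\wedge^{2n-k_1-k_2}V^*$; since the wedge pairing $\wedge^{k_1+k_2}V^*\times\wedge^{2n-k_1-k_2}V^*\to\wedge^{2n}V^*$ is perfect, it suffices to check $\mu\wedge A=\mu\wedge B$ for every monomial $\mu=\mu_1\wedge\mu_2$ in the adapted base, with $\mu_i\in\wedge^{m_i}V_i^*$ and $m_1+m_2=k_1+k_2$. In the matching case $(m_1,m_2)=(k_1,k_2)$: by the definition of $*_s$ together with (i) and (ii),
$$
\mu\wedge A=(\omega^{(1)})^{-1}(\mu_1,u)\,(\omega^{(2)})^{-1}(\mu_2,v)\,\frac{(\omega^{(1)})^{n_1}}{n_1!}\wedge\frac{(\omega^{(2)})^{n_2}}{n_2!};
$$
for $B$, move $\mu_2$ (degree $k_2$) past $*_s^1 u$ (degree $2n_1-k_1$), which costs the sign $(-1)^{k_2(2n_1-k_1)}=(-1)^{k_1k_2}$ and thus cancels the prefactor, leaving $\mu_1\wedge *_s^1 u\wedge\mu_2\wedge *_s^2 v$; applying the definitions of $*_s^1$ and $*_s^2$ to the top-degree factors $\mu_1\wedge *_s^1 u$ and $\mu_2\wedge *_s^2 v$ separately reproduces exactly the same expression, so $\mu\wedge A=\mu\wedge B$. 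In the non-matching case $\mu\wedge A=0$ by the vanishing in (i), while $\mu\wedge B=0$ because after the same reshuffle one of $\mu_1\wedge *_s^1 u\in\wedge^{m_1+2n_1-k_1}V_1^*$ or $\mu_2\wedge *_s^2 v\in\wedge^{m_2+2n_2-k_2}V_2^*$ has degree exceeding $2n_1$ or $2n_2$ and hence vanishes. Since such monomials $\mu$ span $\wedge^{k_1+k_2}V^*$, this gives $A=B$.

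I do not anticipate a real obstacle; the only points requiring care are the sign bookkeeping in the identity $\mu_1\wedge\mu_2\wedge *_s^1 u\wedge *_s^2 v=(-1)^{k_1k_2}\mu_1\wedge *_s^1 u\wedge\mu_2\wedge *_s^2 v$ (using that $2n_1$ is even) and making the block‑structure determinant arguments in (i) precise for mismatched bidegrees. Alternatively, one could bypass the pairing argument entirely by checking the identity on elementary forms via Theorem \ref{th: star-pri} applied in each factor and in $V$, and then extending by the Berndtsson Lemma and linearity; but the direct computation sketched above is shorter and self-contained.
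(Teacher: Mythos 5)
Your proof is correct and follows essentially the same route as the paper's: pairing both sides against complementary-degree forms $a\wedge b$ with $a\in\wedge^{k_1}V_1^*$, $b\in\wedge^{k_2}V_2^*$ and using the defining property of $*_s$ together with the block structure of $\omega^{-1}$ and the factorization $\omega_n=\omega^{(1)}_{n_1}\wedge\omega^{(2)}_{n_2}$. You merely supply details the paper leaves implicit (the sign bookkeeping and the vanishing for mismatched bidegrees), which is a welcome completion rather than a different argument.
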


\begin{proof} For every $a\in \wedge^{k_1}V_1^*, \ b\in \wedge^{k_2}V_2^*$, we have
$$
a\wedge  b \wedge (-1)^{k_1k_2} *_s^1 u \wedge *_s^2 v= \omega^{-1}(a\wedge u, b\wedge v)\omega_n,
$$
which gives the lemma.
\end{proof}

Now we are able to prove Theorem \ref{th: star-pri}.

\begin{proof}[Proof of Theorem \ref{th: star-pri}] By the Berndtsson lemma, we can assume that
$$
u=e_1^*\wedge \cdots \wedge e_k^*.
$$
Consider $V={\rm Span}\{e_j^*, f_j^*\}_{1\leq j\leq k} \oplus {\rm Span}\{e_{k+1}^*, f_{k+1}^*\} \oplus \cdots \oplus {\rm Span}\{e_{n}^*, f_{n}^*\} $ and write
$$
*_s=*_s^{\leq k}\oplus *_s^{k+1} \oplus\cdots \oplus *_s^n.
$$
Since
$$
*_s^{j}(1)=e_j^*\wedge f_j^*, \ *_s^{j}(e_j^*\wedge f_j^*)=1, \ \forall \ k+1\leq j\leq n,
$$
by the Guillemin lemma, we have
$$
*_s (e_{k+1}^*\wedge f_{k+1}^*\wedge \cdots \wedge e_{k+r}^*\wedge f_{k+r}^* \wedge u)=e_{k+r+1}^*\wedge f_{k+r+1}^*\wedge \cdots \wedge e_{n}^*\wedge f_{n}^* \wedge *_s^{\leq k} u,
$$
which implies
$$
*_s(\omega_r\wedge u)=\omega_{n-k-r} \wedge *_s^{\leq k} u.
$$
Since $*_s^{\leq k} = *_s^{1} \oplus\cdots \oplus *_s^k$ and
$$
*_s^{j} e_j^*=-e_j^*,  \ \forall \ 1\leq j\leq k,
$$
 the Guillemin lemma gives
$$
*_s^{\leq k} u=(-1)^{k-1} (-e_1^*) \wedge *_s^{\leq (k-1)} (e_2^*\wedge\cdots\wedge e_k^*)=\cdots=(-1)^{k+\cdots+1} u,
$$
the proof is complete.
\end{proof}

\begin{definition} We call $\{L, \Lambda, B\}$ the $sl_2$-triple on $\oplus_{0\leq k\leq 2n}\wedge^k V^*$,  where
$$
Lu:=\omega\wedge u, \ \Lambda :=*_s^{-1} L*_s, \ B:=[L,\Lambda].
$$
\end{definition}

We have
$$
\omega^{-1}(Lu,v)=\omega^{-1}(u, \Lambda v).
$$
Hence $\Lambda$ is the adjoint of $L$. Put
$$
L_r:=L^r/r!, \ L_0:=1, \ L_{-1}:=0.
$$
We have:

\begin{proposition}\label{pr:lamba-b} If $u$ is a primitive $k$-form then
$$
\Lambda(L_r u)=(n-k-r+1) L_{r-1} u, \ B(L_r u)=(k+2r-n)L_r u,
$$
for every $0\leq r \leq n-k+1$.
\end{proposition}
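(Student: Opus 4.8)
The plan is to reduce everything to Theorem \ref{th: star-pri} together with the two elementary identities $\omega\wedge\omega_m=(m+1)\omega_{m+1}$, i.e.\ $LL_m=(m+1)L_{m+1}$, and the primitivity relation $L_{n-k+1}u=\omega_{n-k+1}\wedge u=0$. Write $\epsilon_k:=(-1)^{k+\cdots+1}$, so that Theorem \ref{th: star-pri} reads $*_s(L_r u)=\epsilon_k\,L_{n-k-r}u$ for a primitive $k$-form $u$ and $0\le r\le n-k$; note also $L_r u=0$ for all $r\ge n-k+1$. Since $*_s^2=1$ we have $\Lambda=*_s L *_s$, and $\epsilon_k^2=1$.

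For the first identity I would simply apply the three operators in turn. Starting from $L_r u$ with $0\le r\le n-k+1$: the operator $*_s$ sends it to $\epsilon_k L_{n-k-r}u$ (this is Theorem \ref{th: star-pri} when $0\le r\le n-k$, and both sides vanish when $r=n-k+1$); then $L$ sends this to $\epsilon_k(n-k-r+1)L_{n-k-r+1}u$; then $*_s$ again sends it to $\epsilon_k(n-k-r+1)\,\epsilon_k L_{r-1}u$. Here one checks $0\le n-k-r+1\le n-k$, which holds for $1\le r\le n-k+1$, while for $r=0$ both the output and the claimed right-hand side vanish because $L_{n-k+1}u=0$. Using $\epsilon_k^2=1$ this gives $\Lambda(L_r u)=(n-k-r+1)L_{r-1}u$.

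For the second identity I would expand $B=[L,\Lambda]$ and feed in the formula just obtained. Using $LL_{r-1}=rL_r$ and $LL_r=(r+1)L_{r+1}$ together with the first identity applied at index $r+1$, one computes
$$L\Lambda(L_r u)=r(n-k-r+1)L_r u,\qquad \Lambda L(L_r u)=(r+1)(n-k-r)L_r u,$$
so $B(L_r u)=\bigl[r(n-k-r+1)-(r+1)(n-k-r)\bigr]L_r u$, and the bracket simplifies to $2r-(n-k)=k+2r-n$. At the boundary $r=n-k+1$ the term $L_{r+1}u$ vanishes, consistently with the first identity, so no separate argument is needed there.

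The computation has no genuinely hard step; the only point demanding care is the bookkeeping of the admissible range of exponents, since Theorem \ref{th: star-pri} is stated only for the powers $\omega_r$ with $0\le r\le n-k$, and one must invoke the primitivity relation $L_{n-k+1}u=0$ to handle the boundary values $r=0$ and $r=n-k+1$.
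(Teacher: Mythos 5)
Your proof is correct and follows essentially the same route as the paper: both compute $\Lambda(L_ru)=*_sL*_s(L_ru)$ directly from Theorem \ref{th: star-pri} and then derive the $B$-identity from the first one. Your treatment is slightly more careful about the boundary indices $r=0$ and $r=n-k+1$, which the paper leaves implicit, but there is no substantive difference.
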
 

\begin{proof} Put $c=(-1)^{k+\cdots+1}$, then
$$
L*_s(L_r u)=c L(L_{n-k-r} u)=(n-k-r+1)c L_{n-k-r+1} u=(n-k-r+1) *_s(L_{r-1} u),
$$
which gives the first identity. The second follows directly from the first.
\end{proof}

Now let us consider another structure on a linear space, which can be used to define an inner product structure on $(V, \omega)$. 

\begin{definition} We call a linear map $J: V\to V$ an almost complex structure on $V$ if $J(Ju)=-u$ for every $u\in V$.
\end{definition}

\begin{definition} An almost complex structure $J$ on $(V, \omega)$ is said to be compatible with $\omega$ if
$$
\omega(u, Jv)=\omega(v, Ju) , \ \forall \ u, v\in V, 
$$
and $\omega(u, Ju)>0$ if $u$ is not zero.
\end{definition}

If $J$ is an almost complex structure on $V$ then 
$$
J(v)(u):=v(Ju), \ \forall \ u\in V, \ v\in V^*,
$$
defines an almost complex structure on $V^*$. 

\begin{definition} We call 
$$
J(v_1\wedge \cdots\wedge v_k):=J(v_1)\wedge \cdots \wedge J(v_k), 
$$
the Weil operator on  $\oplus_{0\leq k\leq 2n}\wedge^k V^*$.
\end{definition}

Since the eigenvalues of $J$ are $\pm i$, its eigenvectors lie in $\C\otimes V^*$. Put
$$
E_i:=\{u\in \C\otimes V^*: J(u)=iu \}, \ \ E_{-i}:=\{u\in \C\otimes V^*: J(u)=-iu \},
$$
we know that
$$
E_i=\{u-iJu: u\in V^*\}, \ \ E_{-i}=\{u+iJu: u\in V^*\}.
$$
and $\C\otimes V^*=E_{i}\oplus E_{-i}$. Put
$$
\wedge^{p,q} V^*:=(\wedge^p E_i) \wedge (\wedge^q E_{-i}).
$$
Then we have
$$
\C\otimes(\wedge^k V^*)=\wedge^k(\C\otimes V^*)=\oplus_{p+q=k} \wedge^{p,q} V^*,
$$
and
$$
Ju=i^{p-q} u, \ \forall \ u\in \wedge^{p,q} V^*.
$$
We call $\wedge^{p,q} V^*$ the space of $(p,q)$-forms. 

\begin{proposition} An almost complex structure $J$ on $(V, \omega)$ is compatible with $\omega$ iff
$$
(\alpha,\beta):=\omega^{-1}(\alpha, J \bar \beta), 
$$
defines a Hermitian inner product structure on $\wedge^{p,q} V^*$, $0\leq p,q \leq n$.
\end{proposition}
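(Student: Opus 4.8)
The plan is to prove the two implications, in both cases reducing from $\wedge^{p,q}V^*$ to $V^*$ by using that $\omega^{-1}$ on $\wedge^kV^*$ is the Gram determinant built from $\omega^{-1}$ on $V^*$ and that the Weil operator on $\wedge^kV^*$ is the $k$-fold product of $J$ on $V^*$. Two preliminary facts will be used throughout. First, the dictionary with $V$: from $\omega^{-1}(T_\omega u,T_\omega v)=-\omega(u,v)$ and $J\circ T_\omega=T_\omega\circ J'$, where $J'$ is defined by $\omega(v,J'u)=\omega(Jv,u)$, one gets $\omega^{-1}(a,Ja)=\omega(u,Ju)$ and $\omega^{-1}(a,Jb)=\omega(v,Ju)$ whenever $a=T_\omega u$ and $b=T_\omega v$; thus positivity of $\omega^{-1}(\cdot,J\cdot)$ on $V^*$ corresponds to $\omega(u,Ju)>0$, and its symmetry to $\omega(u,Jv)=\omega(v,Ju)$. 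Second, Hermitian symmetry $(\alpha,\beta)=\overline{(\beta,\alpha)}$ on each $\wedge^{p,q}V^*$ is automatic: it follows from $\overline{(\alpha,\beta)}=\omega^{-1}(\bar\alpha,J\beta)$, the eigenvalue $Ju=i^{p-q}u$ on $\wedge^{p,q}V^*$, and the identity $\omega^{-1}(\mu,\nu)=(-1)^k\omega^{-1}(\nu,\mu)$ on $\wedge^kV^*$ (immediate from the determinant formula and the antisymmetry of $\omega^{-1}$ on $V^*$). So the genuine content is positive-definiteness together with the mutual orthogonality of the spaces $\wedge^{p,q}V^*$ inside $\C\otimes\wedge^kV^*$ — the latter being the part of ``Hermitian inner product structure'' that remembers the bigrading.

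For ``$J$ compatible $\Rightarrow$ inner product'': compatibility says $g(u,v):=\omega(u,Jv)$ is a real inner product on $V$, and substituting $Jv$ for $v$ in its symmetry gives $\omega(Ju,Jv)=\omega(u,v)$; hence the $g$-orthogonal complement of a $J$-invariant subspace equals its $\omega$-orthogonal complement and is again $J$-invariant, and the standard induction (take a $g$-unit vector $e_1$, put $f_1=Je_1$, iterate on the $\omega$-orthogonal complement) produces a unitary symplectic coframe $e_1^*,f_1^*,\dots,e_n^*,f_n^*$ with $\omega=\sum_j e_j^*\wedge f_j^*$, $Je_j^*=-f_j^*$, $Jf_j^*=e_j^*$. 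Then $\epsilon_j:=e_j^*+if_j^*$ span $E_i$, the spaces $E_i$ and $E_{-i}$ are $\omega^{-1}$-isotropic, and $P_{jk}:=\omega^{-1}(\epsilon_j,J\bar\epsilon_k)=2\delta_{jk}$. By isotropy the Gram matrix appearing in $\omega^{-1}$ on $\wedge^{p+q}(\C\otimes V^*)$ is block diagonal, which yields the closed form $(\epsilon_I\wedge\bar\epsilon_J,\ \epsilon_K\wedge\bar\epsilon_L)=\det(P_{I,K})\,\overline{\det(P_{J,L})}$; this is the tensor product of the exterior-power Hermitian forms induced by $P$ on $\wedge^pE_i$ and $\wedge^qE_{-i}$, hence positive and diagonal in the monomial basis. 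Finally, isotropy of $E_i$ forces $\omega^{-1}$ to pair $\wedge^{p,q}V^*$ only with $\wedge^{q,p}V^*$, so $(\alpha,\beta)=0$ whenever $\alpha\in\wedge^{p,q}V^*$, $\beta\in\wedge^{p',q'}V^*$ with $(p',q')\neq(p,q)$. (All of this is the Guillemin lemma applied to $V^*=\bigoplus_j\mathrm{Span}\{e_j^*,f_j^*\}$.)

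For the converse: positivity of $(\cdot,\cdot)$ on $\wedge^{1,0}V^*=E_i$ alone yields $\omega(u,Ju)>0$ — writing a general element of $E_i$ as $a-iJa$, the antisymmetry of $\omega^{-1}$ gives $(a-iJa,a-iJa)=2\,\omega^{-1}(a,Ja)=2\,\omega(u,Ju)$. For the symmetry condition one uses orthogonality of $\wedge^{1,0}V^*$ and $\wedge^{0,1}V^*$: for $\alpha\in E_i$ and $\beta\in E_{-i}$ we have $\bar\beta\in E_i$ and $J\bar\beta=i\bar\beta$, so $(\alpha,\beta)=i\,\omega^{-1}(\alpha,\bar\beta)$, and its vanishing for all $\alpha,\bar\beta\in E_i$ says $E_i$ is $\omega^{-1}$-isotropic; expanding $\omega^{-1}(a-iJa,b-iJb)=0$, this is equivalent to $\omega^{-1}(a,Jb)=\omega^{-1}(b,Ja)$ on $V^*$, hence to $\omega(u,Jv)=\omega(v,Ju)$ on $V$ by the dictionary. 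With both conditions in hand $J$ is compatible.

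I expect the main obstacle to be exactly this last point — extracting the symmetry condition in the converse. Positive-definiteness of $(\cdot,\cdot)$ on each $\wedge^{p,q}V^*$ taken in isolation does not force it (one can perturb a compatible $J$ so that $\omega(u,Jv)=\omega(v,Ju)$ fails while every $\wedge^{p,q}V^*$ still carries a positive Hermitian form), so the mutual orthogonality of the pieces has to be read as part of the statement and genuinely used; once it is, reading $\omega(u,Jv)=\omega(v,Ju)$ off the $\wedge^{1,0}$–$\wedge^{0,1}$ pairing is immediate, and what remains is the determinant bookkeeping indicated in the second paragraph.
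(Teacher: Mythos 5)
Your proof is correct, and its first preliminary fact --- the dictionary $\omega^{-1}(T_\omega u, J(T_\omega v))=\omega(v,Ju)$ --- is essentially the whole of the paper's own proof: the paper derives $T_\omega\circ J=-J\circ T_\omega$ from compatibility, verifies this single degree-one identity, and ends with ``which gives the proposition,'' leaving everything else implicit. What you add is (i) the extension to $\wedge^{p,q}V^*$ via a unitary symplectic coframe and the Gram-determinant factorization (your invocation of the Guillemin lemma for the orthogonal decomposition $V^*=\oplus_j\mathrm{Span}\{e_j^*,f_j^*\}$ is the right mechanism, and the diagonal values $2^{p+q}$ on the monomial basis check out); (ii) the observation that Hermitian symmetry within each $\wedge^{p,q}V^*$ is automatic from $Ju=i^{p-q}u$ together with $\omega^{-1}(\mu,\nu)=(-1)^k\omega^{-1}(\nu,\mu)$; and (iii) an actual proof of the converse. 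Point (iii) is the genuinely valuable part, and your cautionary remark there is correct: positive-definiteness of the restricted forms alone does not force compatibility. For instance, for $n=2$ one may replace $\epsilon_2$ by $\epsilon_2+t\bar\epsilon_1$ with $t\neq0$ small; the new $E_i$ is no longer $\omega^{-1}$-isotropic, so the symmetry $\omega(u,Jv)=\omega(v,Ju)$ fails, yet every restricted form remains positive definite by openness. So the mutual orthogonality of the types --- equivalently, Hermitian symmetry of the form on all of $\C\otimes\wedge^1V^*$, which forces $\omega^{-1}$-isotropy of $E_i$ --- must indeed be read as part of the statement, and your extraction of $\omega(u,Ju)>0$ from positivity on $E_i$ and of the symmetry condition from $E_i\perp E_{-i}$ is exactly the missing half of the argument. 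One point worth making explicit, which you use correctly: the dictionary $\omega^{-1}(a,Jb)=\omega(v,Ju)$ for $a=T_\omega u$, $b=T_\omega v$ holds for an arbitrary almost complex structure (it only needs $T_\omega^{-1}=T_{\omega^{-1}}$), not just for compatible ones; this is what legitimizes its use in the converse direction, whereas the paper's derivation of it passes through the compatibility identity $\omega(v,Ju)=-\omega(Jv,u)$.
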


\begin{proof} Assume that $J$ is compatible with $\omega$. Then 
$$
T_{\omega}(Ju)(v)=\omega(v, Ju)=-\omega(Jv,u)=-J(T_{\omega} u)(v), \ u,v\in V,
$$
Thus $T_\omega \circ J=-J\circ T_{\omega}$. Now put 
$$
a=T_{\omega}(u), \ b=J(T_{\omega}(v))=-T_{\omega}(Jv).
$$
Then $\omega(v, Ju)=T_{\omega}(Ju)(v)=-(Ja)(v)=-a(Jv)=a(T_{\omega^{-1}}(b))$, thus
$$
\omega(v, Ju)=T_{\omega^{-1}}(b)(a)=\omega^{-1}(a,b)=\omega^{-1}(T_{\omega} u, J(T_{\omega} v)),
$$
which gives the proposition.
\end{proof}

\begin{definition} The Hodge star operator $*: \wedge^{p,q} V^* \to \wedge^{n-q,n-p} V^*$ is defined by
$$
u\wedge *\bar v=(u,v)\omega_n.
$$
\end{definition}

The above proposition gives
$$
*=*_s\circ J=J\circ *_s.
$$

\subsection{Application in complex geometry}

Let $(X,\omega)$ be an $n$-dimensional complex manifold with a \emph{Hermitian form} (smooth positive $(1,1)$-form) $\omega$. Let $(E,h_E)$ be a holomorphic vector bundle over $X$ with a smooth Hermitian metric $h_E$. Let us denote by $V^{k}$ the space of $E$-valued $k$-forms with compact support on $X$. The following theorem is a direct consequence of Theorem \ref{th:hlt-1}.

\begin{theorem}[Hard Lefschetz theorem] For each $0\leq k\leq n$, 
\begin{equation}\label{eq:hl}
u \mapsto \omega^{n-k} \wedge u,  \  \ u\in V^{k},
\end{equation}
defines an isomorphism between $V^{k}$ and $V^{2n-k}$.
\end{theorem}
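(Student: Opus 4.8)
The plan is to deduce this from the purely linear-algebraic Hard Lefschetz theorem (Theorem \ref{th:hlt-1}) by a pointwise argument, exploiting that the operator $u \mapsto \omega^{n-k}\wedge u$ is $C^\infty(X)$-linear and hence is induced by a bundle map. Concretely, writing $2n = \dim_\R X$, consider the morphism of smooth complex vector bundles
$$
\ell \colon \wedge^k T^*X \otimes E \longrightarrow \wedge^{2n-k} T^*X \otimes E, \qquad \ell_x(\alpha\otimes e) = (\omega_x^{\,n-k}\wedge \alpha)\otimes e .
$$
It suffices to prove that $\ell_x$ is a linear isomorphism for every $x\in X$; then $\ell$ is a bundle isomorphism, and applying it to sections yields the claim about $V^k$ and $V^{2n-k}$.

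First I would verify that for each $x$ the real $2n$-dimensional space $T_xX$ equipped with $\omega_x$ satisfies the hypothesis of Theorem \ref{th:hlt-1}. The form $\omega_x$ is alternating because $\omega$ is a $2$-form, and it is non-degenerate because $\omega$ is a positive $(1,1)$-form: in a local unitary coframe one has $\omega_x = \sum_j \tfrac{i}{2}\,\varphi_j\wedge\bar\varphi_j$ with the $\varphi_j$ linearly independent, so $\omega_x^{\,n}$ is a nonzero volume element and $\omega_x$ is symplectic. Hence Theorem \ref{th:hlt-1} applies and gives that $\alpha \mapsto \omega_x^{\,n-k}\wedge\alpha$ is an isomorphism $\wedge^k T_x^*X \to \wedge^{2n-k} T_x^*X$.

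Next I would tensor this isomorphism with the identity of the fiber $E_x$; the tensor product of isomorphisms is an isomorphism, so $\ell_x$ is bijective at every point, and therefore $\ell$ is an isomorphism of smooth vector bundles with a smooth inverse bundle map $\ell^{-1}$. Finally, $u\mapsto \ell(u)=\omega^{n-k}\wedge u$ sends $V^k$ into $V^{2n-k}$ (wedging with the smooth form $\omega^{n-k}$ preserves smoothness and does not enlarge the support), it is injective since it is fiberwise injective, and it is surjective with inverse $v\mapsto \ell^{-1}(v)$, which likewise maps $V^{2n-k}$ into $V^k$ since $\ell^{-1}$ is a smooth bundle map preserving supports.

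I do not expect a genuine obstacle here: the substance is entirely contained in Theorem \ref{th:hlt-1}, and the only point deserving a line of care is the non-degeneracy of $\omega_x$ — that positivity of the $(1,1)$-form $\omega$ makes it symplectic on each real tangent space — together with the routine check that smoothness and compactness of support survive passage through the inverse bundle map.
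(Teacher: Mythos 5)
Your proposal is correct and is exactly the argument the paper intends: the paper gives no separate proof, stating only that the result is a direct consequence of the linear Hard Lefschetz theorem (Theorem \ref{th:hlt-1}), and your pointwise reduction — checking that the positive $(1,1)$-form $\omega_x$ is symplectic on each $T_xX$, tensoring the resulting fiberwise isomorphism with $\mathrm{id}_{E_x}$, and noting that the smooth inverse bundle map preserves smoothness and compact support — is precisely the routine verification being left to the reader.
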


\begin{definition} We call an $E$-valued $k$-form, say $u$, on $X$ a primitive form if $k\leq n$ and $\omega^{n-k+1}\wedge u\equiv 0$.
\end{definition}

Now we have the following analogy of Theorem \ref{th: Lef-d-1}:

\begin{theorem}[Lefschetz decomposition formula] Every $E$-valued $k$-form $u$ on $X$ has a unique decomposition as follows:
\begin{equation}
u=\sum \omega_r \wedge u^r, \ \omega_r:=\frac{\omega^r}{r!},
\end{equation}
where each $u^r$ is an $E$-valued primitive $(k-2r)$-form.
\end{theorem}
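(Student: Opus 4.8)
The plan is to deduce the decomposition fiberwise from the linear‐algebra version, Theorem~\ref{th: Lef-d-1}, and then to check that the resulting components depend smoothly on the point. Fix $x\in X$ and apply Theorem~\ref{th: Lef-d-1} to the real cotangent space $T^*_xX$ equipped with the symplectic form $\omega_x$ (which is non‐degenerate since $\omega$ is a positive $(1,1)$‐form), tensored with the fiber $E_x$. All the relevant operators — $\omega_x\wedge-$ and the inverses of its powers — act only on the form factor, so the $E$‐coefficients are merely carried along. This yields, at each point, a unique expression $u(x)=\sum_r \omega_r(x)\wedge u^r(x)$ with each $u^r(x)$ a primitive $E_x$‐valued $(k-2r)$‐form. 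Uniqueness, both pointwise and globally, is immediate from Theorem~\ref{th:hlt-1}: if $\sum_r\omega_r\wedge u^r\equiv 0$ with all $u^r$ primitive, then wedging with a suitable power of $\omega$ and invoking the hard Lefschetz isomorphism forces each $u^r$ to vanish.

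For existence together with smoothness I would run the same peeling induction that proves Theorem~\ref{th:Lef}. First treat $k\le n$. Given a smooth $E$‐valued $k$‐form $u$, the bundle map $\omega^{\,n-k+2}\wedge-\colon V^{k-2}\to V^{2n-k+2}$ is a fiberwise isomorphism by hard Lefschetz; in local frames its matrix is smooth and invertible, so by Cramer's rule its inverse is again smooth, and there is a unique smooth $E$‐valued $(k-2)$‐form $v$ with $\omega^{\,n-k+2}\wedge v=\omega^{\,n-k+1}\wedge u$. Then $u^0:=u-\omega\wedge v$ is smooth and satisfies $\omega^{\,n-k+1}\wedge u^0=0$, i.e. is primitive, and one applies the inductive hypothesis to the lower‐degree form $v$. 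For $k>n$, first use hard Lefschetz to write $u=\omega^{\,k-n}\wedge w$ for a unique smooth $E$‐valued $(2n-k)$‐form $w$, apply the previous case to $w$, and reassemble.

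I do not expect a genuine obstacle: the mathematical content is entirely the linear statement of Theorem~\ref{th: Lef-d-1}, and the only point requiring a little care is that inverting the hard Lefschetz bundle isomorphisms preserves smoothness, which is the Cramer's‐rule remark above. Equivalently — and this makes smoothness manifest while reproving uniqueness at the same time — one may write the projections onto the summands $\omega_r\wedge(\text{primitive})$ as universal polynomials in $L=\omega\wedge-$ and its $sl_2$‐adjoint $\Lambda$, which depend smoothly on $\omega$ alone; note that $\Lambda$, defined via $*_s$, requires no choice of Hermitian metric, so the decomposition is canonically attached to $\omega$.
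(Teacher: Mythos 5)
Your proof is correct and takes essentially the same route as the paper, which proves this statement by the same peeling induction (via the isomorphism $\omega^{n-k+2}\wedge-$ applied to $\omega^{n-k+1}\wedge u$) in the proof of Theorem~\ref{th:Lef}, together with uniqueness from hard Lefschetz. Your explicit attention to smoothness of the inverted bundle isomorphism is a reasonable addition that the paper leaves implicit.
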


Let $\{e_\alpha\}$ be a local holomorphic frame of $E$, then
$$
||u||^2:=\int_X \sum h_E(e_\alpha, e_\beta) u^\alpha \wedge * \bar u^\beta, \ u:=\sum u^\alpha\otimes e_\alpha.
$$ 
defines a Hermitian inner product structure on $V^k$, we call it  $(\omega,J, h_E)$-metric on $V^k$.

\begin{definition} The Hodge star operator on $V^k$ is defined by $$*u=\sum (*u^\alpha)\otimes e_\alpha, \ u:=\sum u^\alpha\otimes e_\alpha. $$
\end{definition}

Put
$$
\left\{\sum u^\alpha\otimes e_\alpha, \sum u^\beta\otimes e_\beta\right\}_{h_E}:= \sum h_E(e_\alpha, e_\beta)  u^\alpha \wedge \bar u^\beta. 
$$
Then we have
$$
||u||^2=\int_X \{u, *u\}_{h_E}.
$$
The Hodge-Riemann bilinear relation is a direct consequence of Theorem \ref{th: star-pri} and $*=J\circ *_s$.

\begin{theorem}[Hodge-Riemann bilinear relation] If $u$ is an $E$-valued primitive $(p,q)$-form then its $(\omega, J, h_E)$-norm satisfies
\begin{equation}\label{eq:HRR}
||u||^2=\int_X \{u, \omega_{n-k} \wedge I u\}_{h_E},\ Iu:=(-1)^{k+\cdots+1}i^{p-q} u, \ k:=p+q.
\end{equation}
\end{theorem}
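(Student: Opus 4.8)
The plan is to deduce the bilinear relation from the pointwise linear algebra already established, since everything in \eqref{eq:HRR} is essentially the fibrewise identity $*=J\circ *_s$ combined with Theorem~\ref{th: star-pri}. First I would work at a fixed point of $X$ with a local holomorphic frame $\{e_\alpha\}$ of $E$ and write $u=\sum u^\alpha\otimes e_\alpha$. A Hermitian form is, pointwise, a symplectic form on the underlying real tangent space with which the complex structure $J$ is compatible, so the linear theory of Section~1 applies to each coefficient $u^\alpha$. Since the $e_\alpha$ are linearly independent at each point, $\omega^{n-k+1}\wedge u\equiv 0$ gives $\omega^{n-k+1}\wedge u^\alpha=0$; thus every $u^\alpha$ is a primitive $(p,q)$-form in the sense of Definition~\ref{de:primitive} (extended $\C$-linearly).

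Next I would compute $*u^\alpha$. Reading $u^\alpha$ as $\omega_0\wedge u^\alpha$ and invoking Theorem~\ref{th: star-pri} with $r=0$, we get $*_s u^\alpha=(-1)^{k+\cdots+1}\,\omega_{n-k}\wedge u^\alpha$. The Weil operator $J$ is an algebra automorphism of $\oplus_k\wedge^k V^*$, it fixes $\omega$ (which has type $(1,1)$) and acts by $i^{p-q}$ on $(p,q)$-forms, so
$$
*u^\alpha=J(*_s u^\alpha)=(-1)^{k+\cdots+1}\,\omega_{n-k}\wedge Ju^\alpha=(-1)^{k+\cdots+1}i^{p-q}\,\omega_{n-k}\wedge u^\alpha=\omega_{n-k}\wedge Iu^\alpha,
$$
where I used the relation $*=J\circ *_s$ noted after the definition of the Hodge star on $\wedge^{p,q}V^*$. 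Since the Hodge star on $V^k$ acts coefficientwise, summing over $\alpha$ yields $*u=\omega_{n-k}\wedge Iu$.

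Finally I would substitute this into the identity $\|u\|^2=\int_X\{u,*u\}_{h_E}$ recorded just above the statement, which immediately gives $\|u\|^2=\int_X\{u,\omega_{n-k}\wedge Iu\}_{h_E}$, i.e.\ \eqref{eq:HRR}.

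I do not expect a real obstacle here; the only thing requiring care is bookkeeping. One must check that the $\C$-linear extensions of $*_s$, $J$ and $*$ are mutually compatible and commute with wedging by the real form $\omega_{n-k}$, that the volume form $\omega_n$ used in defining $*$ fibrewise is exactly the one implicit in the $(\omega,J,h_E)$-metric, and that the scalar $(-1)^{k+\cdots+1}i^{p-q}$ produced above is literally the operator $I$ of the statement (so that $(-1)^{k+\cdots+1}=(-1)^{k(k+1)/2}$ is read off correctly). This is where a sign error would hide, so I would verify it explicitly in low degree.
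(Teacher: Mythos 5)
Your argument is correct and is exactly the route the paper intends: it states the theorem as a direct consequence of Theorem~\ref{th: star-pri} (applied with $r=0$) and the identity $*=J\circ *_s$, which is precisely what you carry out coefficientwise before substituting into $\|u\|^2=\int_X\{u,*u\}_{h_E}$. Your bookkeeping of the factor $i^{p-q}$ (via $J$ acting on the $(n-k+p,\,n-k+q)$-form $\omega_{n-k}\wedge u^\alpha$) is also right.
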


\section{Lefschetz bundle}

\begin{definition}
Let $V=\oplus_{k=0}^{2n} V^k$ be a direct sum of complex vector bundles over a smooth manifold $M$. Let $L$ be a smooth section of ${\rm End}(V)$. We call $(V, L)$ a Lefschetz bundle if
$$
L(V^l)\subset V^{l+2}, \ \forall \ 0\leq l\leq 2(n-1), \ L(V^{2n-1})=L(V^{2n})=0,
$$ 
and each $L^k: V^{n-k} \to V^{n+k}$, $0 \leq k\leq n$, is an isomorphism. 
\end{definition}

\begin{definition} Let $(V, L)$ be a Lefschetz bundle.  $u\in V^k$ is said to be primitive if  $k\leq n$ and $L^{n-k+1} u=0$. 
\end{definition}

\begin{theorem}\label{th:Lef}  Let $(V, L)$ be a Lefschetz bundle. Then every $u\in V^k$ has a unique decomposition as follows:
\begin{equation}
u=\sum L_r  u^r, \ L_r:=\frac{L^r}{r!}.
\end{equation}
where each $u^r$ is a primitive form in $V^{k-2r}$.
\end{theorem}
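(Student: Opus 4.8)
The plan is to mimic the standard $sl_2$-representation-theoretic argument, but phrased purely in terms of the operator $L$ on a Lefschetz bundle, so that neither a metric nor the symplectic structure is needed. First I would reduce everything to a fixed fiber: since $L$ is a smooth section of $\mathrm{End}(V)$ and the decomposition below will be produced by polynomial expressions in $L$ with constant coefficients, it suffices to prove the statement for a single finite-dimensional graded vector space $V=\oplus_{k=0}^{2n}V^k$ equipped with a nilpotent operator $L$ raising degree by $2$ such that $L^k\colon V^{n-k}\to V^{n+k}$ is an isomorphism for all $0\le k\le n$; the resulting $u^r$ depend smoothly (in fact polynomially) on $u$ and hence glue to smooth sections.

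The key step is existence of the decomposition, which I would obtain by downward induction on $k$. For $k>n$ there is nothing to decompose in the naive sense, but the relevant statement is for all $k$; so instead I run the induction as follows. Define the primitive subspace $P^k:=\ker(L^{n-k+1})\cap V^k$ for $k\le n$. The heart of the matter is to show, for $k\le n$, that
\begin{equation}
V^k=P^k\oplus L(V^{k-2}),
\end{equation}
and dually, for $k>n$, that $L\colon V^{k-2}\to V^k$ is surjective. Granting this, one decomposes any $u\in V^k$ with $k\le n$ as $u=u^0+Lw$ with $u^0\in P^k$ and $w\in V^{k-2}$, then applies the inductive hypothesis to $w$ to write $w=\sum_r L_r w^r$ with $w^r$ primitive, and reassembles, absorbing binomial factors into the $L_r:=L^r/r!$ normalization; for $k>n$ one writes $u=L\tilde w$ by surjectivity and recurses on $\tilde w\in V^{k-2}$. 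To prove the splitting (2.1): if $u\in P^k\cap L(V^{k-2})$, say $u=Lw$, then $L^{n-k+1}u=L^{n-k+2}w=0$, and since $L^{n-(k-2)}=L^{n-k+2}$ is injective on $V^{k-2}$ (this is exactly the Lefschetz isomorphism hypothesis, as $k-2\le n$), we get $w=0$, hence $u=0$; so the sum is direct. For the dimension count, one first establishes by a standard argument that $L^j\colon V^{k}\to V^{k+2j}$ is injective whenever $k+j\le n$ and surjective whenever $k+j\ge n$ — the injectivity for $k+j\le n$ follows by factoring the isomorphism $L^{n-k}=L^{n-k-j}L^j$ through $V^{k+2j}$, and the surjectivity for $k+j\ge n$ is the transpose statement obtained by factoring $L^{k+j-n}$ after the isomorphism $L^{\,2n-2k-?}$ appropriately, or more cleanly by a dimension comparison using $\dim V^m=\dim V^{2n-m}$, which itself follows from the isomorphisms $L^{n-m}\colon V^m\xrightarrow{\sim}V^{2n-m}$ for $m\le n$. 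From $\dim V^{k-2}=\dim V^{k-2}$ and surjectivity of $L\colon V^{k-2}\to V^k$ for $k>n$ one gets $\dim V^k=\dim V^{k-2}-\dim(\ker L\cap V^{k-2})$; feeding this recursion together with $\dim V^k=\dim V^{2n-k}$ pins down $\dim P^k=\dim V^k-\dim L(V^{k-2})$ for $k\le n$, completing (2.1).

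Uniqueness is then easy and I would do it last: if $\sum_r L_r u^r=0$ with each $u^r$ primitive in $V^{k-2r}$, apply $L^{n-k+1}$. Only the terms with $L^{n-k+1}L_r u^r\ne 0$ survive, and since $u^r$ is primitive of degree $k-2r$ one has $L^{n-(k-2r)+1}u^r=0$, i.e. $L^{n-k+2r+1}u^r=0$; choosing the appropriate power isolates the lowest-index term and, using the injectivity of the relevant power of $L$ on the primitive piece, forces $u^0=0$, then inductively all $u^r=0$. The main obstacle I anticipate is purely bookkeeping: getting the binomial coefficients to line up so that the reassembled sum really has the normalized form $\sum L_r u^r$ with \emph{the same} primitive components appearing, and making sure the two inductions (on $k\le n$ downward, and separately on $k>n$) are threaded correctly through the splitting (2.1); the conceptual content is entirely contained in the injectivity/surjectivity properties of powers of $L$, all of which are formal consequences of the Lefschetz isomorphism hypothesis in the definition of a Lefschetz bundle.
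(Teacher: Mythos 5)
Your existence argument is sound and is, at bottom, the same induction as the paper's: reduce to $k\le n$, split off a primitive piece so that $u=u^0+L\hat u$ with $\hat u\in V^{k-2}$, and recurse (the binomial bookkeeping you worry about is harmless, since primitivity is preserved under scaling). The one real difference is that you obtain the splitting $V^k=P^k\oplus L(V^{k-2})$ by a dimension count, whereas the paper gets it in one line: since $L^{n-k+2}\colon V^{k-2}\to V^{2n-k+2}$ is an isomorphism, there is a unique $\hat u$ with $L^{n-k+2}\hat u=L^{n-k+1}u$, and then $u^0:=u-L\hat u$ is primitive by construction. This avoids the injectivity/surjectivity lemma for general powers of $L$ and the rank--nullity recursion entirely; your route works but is longer than necessary.

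The uniqueness step, however, is stated backwards and would fail as described. If $\sum_{r=0}^{j}L_ru^r=0$ with each $u^r\in V^{k-2r}$ primitive, then applying $L^{n-k+1}$ annihilates precisely the $r=0$ term and none of the others: $L^{n-k+1+r}u^r=0$ follows from $L^{n-k+2r+1}u^r=0$ only when $n-k+1+r\ge n-k+2r+1$, i.e. $r\le 0$. More generally, no single power of $L$ can isolate the \emph{lowest}-index term, because killing the term of index $r\ge 1$ requires applying at least $L^{n-k+r+1}$, and any such power already kills $u^0$. The correct move, which is what the paper does, is the opposite one: apply $L_{n-k+j}$, which kills every term with $r<j$ and leaves $L_{n-k+j}L_ju^j$, a nonzero multiple of $L^{n-k+2j}u^j$; since $L^{n-(k-2j)}\colon V^{k-2j}\to V^{2n-k+2j}$ is an isomorphism, this forces $u^j=0$, and one descends from the top index. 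So your uniqueness argument should run the induction from the highest-index term downward, not from $u^0$ upward; with that reversal the proof closes.
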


\begin{proof} We can assume that $k\leq n$ since we have the isomorphism $L^k: V^{n-k} \to V^{n+k}$. Notice that the theorem is trivial if $k=0,1$. Assume that $2\leq k \leq n$. The  isomorphism 
$$
L^{n-k+2}: V^{k-2}\to V^{2n-k+2},
$$ 
gives $\hat u\in V^{k-2}$ such that $L^{n-k+2}\hat u=L^{n-k+1} u$. Put $u^0=u-L\hat u$, we know that $u^0$ is primitive and $u=u^0+L\hat u$. Consider $\hat u$ instead of $u$, we have
$\hat u= u^1 +L \tilde u$, where $u^1$ is primitive. By induction, we know that $u$ can be written as
$$
u=\sum L_r u^r,
$$
where each $u^r\in V^{k-2r}$ is primitive. For the uniqueness part, assume that
$$
0=\sum_{r=0}^j L_r u^r,
$$
where each $u^r\in V^{k-2r}$ is primitive. Then we have
$$
0=L_{n-k+j} \left(\sum_{r=0}^j L_r u^r\right) = L_{n-k+j}L_j u^j,
$$
which gives $u^j =0$. By induction on $j$ we know that all $u^r=0$.
\end{proof}

\begin{definition} We call the following $\C$-linear map $*_s: V\to V$ defined by
$$
*_s(L_r u):=(-1)^{k+\cdots+1} L_{n-r-k} u,
$$
where $u\in V^k$ is primitive, the Lefschetz star operator on $V$. 
\end{definition}

Notice that $*_s^2=1$. We know from the last section that the Lefschetz star operator is a generalization of the symplectic star operator.

\begin{definition} Put $\Lambda=*_s^{-1} L*_s$, $B:=[L, \Lambda]$. We call $(L,\Lambda, B)$ the $sl_2$-triple on $(V,L)$ (Proposition \ref{pr:lamba-b} is also true for general Lefschetz bundle).
\end{definition}

\section{Variation of Lefschetz star operator}

\subsection{Main theorem}

Our main theorem is a generalization of the main result in \cite{Wang-k}.

\begin{theorem}\label{th:main}  Let $D$ be a degree preserving connection on a Lefschetz bundle $(V,L)$. Put $ \theta:=[D, L]$.  If $[L,\theta]=0$ then $*_s^{-1}D*_s=D+[\Lambda, \theta]$. 
\end{theorem}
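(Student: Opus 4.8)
The plan is to prove the operator identity by evaluating both sides on a spanning set. Since $*_s^2=1$ we have $*_s^{-1}=*_s$, so the claim is $*_sD*_s=D+[\Lambda,\theta]$; moreover the difference $*_sD*_s-D-[\Lambda,\theta]$ is $C^\infty$‑linear (it is an $\mathrm{End}(V)$‑valued $1$‑form), so by the Lefschetz decomposition (Theorem~\ref{th:Lef}) it suffices to show that $*_sD*_s$ and $D+[\Lambda,\theta]$ agree on every $L_ru$ with $u\in V^k$ primitive and $0\le r\le n-k$. The one thing I need to know about the connection is a commutation rule: $[D,L]=\theta$ together with $[L,\theta]=0$ gives $[D,L^j]=jL^{j-1}\theta$, hence $D(L_jv)=L_j(Dv)+\theta L_{j-1}v$ and $\theta L_jv=L_j\theta v$ for every section $v$.

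Next I would record two structural facts about a primitive section $u\in V^k$; set $m:=n-k$. Since $\theta$ commutes with $L$ and $L^{m+1}u\equiv0$, also $L^{m+1}(\theta u)\equiv0$; comparing with the Lefschetz decomposition of $\theta u\in V^{k+2}$ rules out the components $L^tp_t$ with $t\ge 3$, forcing $\theta u=p_0+Lp_1+\tfrac12L^2p_2$ with $p_j$ primitive of degree $k+2-2j$. Applying $D$ to $L^{m+1}u\equiv0$ and using the commutation rule gives $L^{m+1}(Du)=-(m+1)L^m(\theta u)=-\tfrac{m+1}{2}L^{m+2}p_2$; on the other hand, writing $Du=w_0+Lw_1+\cdots$ in Lefschetz form, the term $L^{m+1}w_0$ vanishes by primitivity of $w_0$, so $L^{m+1}(Du)=L^{m+2}w_1$. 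Since $L^{m+2}$ is injective on primitive sections of degree $k-2$, this pins down $w_1=-\tfrac{m+1}{2}p_2$ and kills all higher components, so $Du=w_0+Lw_1$ with $w_0$ primitive of degree $k$. These two reductions are what turn the rest into a finite check.

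Then comes the computation. On the left, $*_sD*_s(L_ru)=c_k\,*_sD(L_{m-r}u)$ with $c_k:=(-1)^{k+\cdots+1}=(-1)^{k(k+1)/2}$; expanding $D(L_{m-r}u)$ by the commutation rule and then applying $*_s$ term by term to the (few) primitive pieces of $Du$ and $\theta u$ — using $*_s(L_jv)=(-1)^{\ell+\cdots+1}L_{n-j-\ell}v$ for $v$ primitive of degree $\ell$ — yields an explicit combination of $L_rw_0$, $L_{r-1}p_0$, $L_rp_1$ and $L_{r+1}w_1$, $L_{r+1}p_2$. On the right, $D(L_ru)$ is expanded by the commutation rule, and $[\Lambda,\theta](L_ru)=\Lambda(L_r\theta u)-\theta\big((n-k-r+1)L_{r-1}u\big)$ is expanded using Proposition~\ref{pr:lamba-b} and, again, the decomposition of $\theta u$. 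After substituting $w_1=-\tfrac{m+1}{2}p_2$ both sides collapse to the same combination: the coefficients of $L_rw_0$, $L_{r-1}p_0$, $L_rp_1$ are respectively $1$, $-1$, $n-k-r$ on both sides, and the coefficient of $L_{r+1}p_2$ comes out to $\tfrac{(r+1)(n-k-r+1)}{2}$ on both sides.

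I do not expect a conceptual obstacle: once the two structural reductions of the second paragraph are in place the identity is essentially forced, and what remains is clerical — getting the signs right (one needs, for instance, $c_kc_{k-2}=c_kc_{k+2}=-1$, immediate from $c_k=(-1)^{k(k+1)/2}$), keeping track of which primitive components appear and with which power of $L$, and verifying the single nontrivial binomial identity for the $L_{r+1}p_2$‑coefficient. One could instead try a ``closed form'' route — writing $*_s$ as a degree‑dependent sign (which commutes with the degree‑preserving $D$) times the Weyl element $e^{L}e^{-\Lambda}e^{L}$ of the $sl_2$‑triple — but this does not shorten the argument, since conjugating $D$ past $e^{-\Lambda}$ brings in $[D,\Lambda]$, which the theorem is in effect computing along the way; so the direct evaluation above is the cleaner path.
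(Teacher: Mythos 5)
Your proposal is correct and follows essentially the same route as the paper's proof: reduce to $L_ru$ with $u$ primitive via the Lefschetz decomposition, write $\theta u$ as a sum of three primitive pieces, use $D(L^{n-k+1}u)=0$ to pin down the $L$-component of $Du$ as $-\tfrac{n-k+1}{2}p_2$ (the paper's ``$v:=Du+(n-k+1)Lc$ is primitive''), and match coefficients using Proposition~\ref{pr:lamba-b}. The coefficients you state, including $\tfrac{(r+1)(n-k-r+1)}{2}$ for $L_{r+1}p_2$, agree with the paper's Step 5 identity after accounting for the normalization $c=\tfrac12 p_2$.
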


\begin{proof} By the Lefschetz decompostion theorem and $*_s^2=1$, it suffices to prove 
$$
*_s D*_s(L_r u)=D(L_r u)+[\Lambda, \theta](L_r u),
$$
where $u$ is a primitive $k$-form. Since $[L,\theta]=0$, we have
$$
D*_s(L_r u)=cD(L_{n-r-k} u)=c (L_{n-r-k-1} \theta u+L_{n-r-k} Du), \  \ c:=(-1)^{k+\cdots+1}.
$$

\emph{Step 1}: Since $u$ is primitive, we have
$$
L_{n-k+1}\theta u= \theta L_{n-k+1} u =0,
$$
which implies that the primitive decomposition of $\theta u$ contains at most three terms. Thus we can write
\begin{equation}
\theta u=a+L b+L^2 c,
\end{equation}
where $a, b, c$ are primitive, which gives
$$
*_s D *_s(L_r u)= -L_{r-1}a
+M L_r b-M(M+1) L_{r+1} c+  c*_s L_{n-r-k} Du, \ M:= n-r-k.
$$

\emph{Step 2}: Since
\begin{equation}
\theta L_r u=L_r(a+L b+L^2 c), 
\end{equation}
Proposition \ref{pr:lamba-b} gives
$$
\Lambda\theta L_r u = (M-1)L_{r-1} a +(r+1)M L_{r} b + (r+1)(r+2)(M+1)L_{r+1} c,
$$
and
$$
\theta \Lambda L_r u = (M+1) \theta  L_{r-1} u =  (M+1)L_{r-1} (a+L b+L^2 c).
$$
Thus
\begin{equation}
[\Lambda, \theta]  L_r u=-2 L_{r-1} a+(M-r) L_r b+(2r+2)(M+1)L_{r+1} c.
\end{equation}

\emph{Step 3}:  Put
$$
A:= *_s D *_s(L_r u)-[\Lambda, \theta]  L_r u, \ B:= D(L_r u).
$$
We have
\begin{eqnarray*}
B  & =  &  L_{r-1}\theta u+L_r D u \\
& =  &   L_{r-1}a+ r L_r b + r(r+1) L_{r+1} c + L_r D u.
\end{eqnarray*}
Since the first two steps gives
$$
A=L_{r-1} a + rL_r b -(M+1)(M+2r+2)L_{r+1} c+  c*_s L_{n-r-k} Du, 
$$
we have
$$
A-B=c*_s L_{n-r-k} Du-  L_r D u- [(M+1)(M+2r+2)+r(r+1)] L_{r+1} c .
$$

\emph{Step 4}:  Primitivity of $u$ implies
$$
0=D(L_{n-k+1} u)=\theta L_{n-k} u + L_{n-k+1} Du.
$$
Notice that 
$$
\theta  L_{n-k} u=L_{n-k} (a+Lb+L^2 c) =L_{n-k}L^2 c.
$$
Thus 
$$
L_{n-k+1} (Du +(n-k+1)L c) = 0,
$$
which implies the primitivity of 
\begin{equation}
v:=Du +(n-k+1)L c.
\end{equation}
Now we have
\begin{eqnarray*}
c*_s L_{n-r-k} Du & = & c*_s L_{n-r-k}  (v-(n-k+1)L c) \\
&  = & L_r v+(n-k+1)(M+1)L_{r+1} c .
\end{eqnarray*}
and
\begin{equation}
L_{r} D u=L_{r} (v-(n-k+1) L c)
= L_r v- (n-k+1)(r+1) L_{r+1} c.
\end{equation}
Thus $c*_s L_{n-r-k} Du  - L_{r} D u$ can be written as
\begin{equation}
[(n-k+1) (M+1)+ (n-k+1)(r+1)] L_{r+1} c.
\end{equation}

\emph{Step 5}: Since our formula is equivalent to $A=B$, by step 3 and 4, it is enough to prove
$$
(n-k+1) (M+1)+ (n-k+1)(r+1)=(M+1)(M+2r+2)+r(r+1),
$$
which is true (recall that $M=n-r-k$).
\end{proof}

\textbf{Remark}: If we write
$$
D=\sum dt^j \otimes D_j,
$$
where $\{t^j\}_{1\leq j\leq m}$ are smooth local coordinates. Then our main theorem is equivalent to
\begin{equation}\label{eq:wangxu}
*_s^{-1} D_j*_s u=D_ju+[\Lambda, \theta_j] u, \ \ \theta_j:=[D_j, L],
\end{equation}
for every $1\leq j\leq m$.

\subsection{Corollary}

\begin{corollary} With the same assumption in Theorem \ref{th:main}, we have 
\begin{equation}\label{eq:main-1}
*_s^{-1} \theta *_s =  [\Lambda, D].
\end{equation}
\end{corollary}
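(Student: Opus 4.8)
The plan is to conjugate the desired identity by $*_s$ and thereby reduce it to a purely $sl_2$-theoretic statement. Since $*_s$ is invertible, the map $X\mapsto *_sX*_s^{-1}$ is an algebra automorphism of $\mathrm{End}_{\C}(V)$, so \eqref{eq:main-1} is equivalent to
$$
\theta\;=\;*_s[\Lambda,D]\,*_s^{-1}\;=\;\bigl[\,*_s\Lambda *_s^{-1},\ *_sD *_s^{-1}\,\bigr].
$$
First I would compute the two conjugates on the right: from $\Lambda=*_s^{-1}L*_s$ we get $*_s\Lambda *_s^{-1}=L$, and from $*_s^2=1$ together with Theorem \ref{th:main} we get $*_sD*_s^{-1}=*_s^{-1}D*_s=D+[\Lambda,\theta]$. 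Hence the claim becomes
$$
\theta=[\,L,\ D+[\Lambda,\theta]\,]=[L,D]+[L,[\Lambda,\theta]]=-\theta+[L,[\Lambda,\theta]],
$$
so it suffices to prove the single identity $[L,[\Lambda,\theta]]=2\theta$.

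For that I would invoke the Jacobi identity in the form $[L,[\Lambda,\theta]]=[[L,\Lambda],\theta]+[\Lambda,[L,\theta]]$. The hypothesis $[L,\theta]=0$ kills the second term, and $[L,\Lambda]=B$ by definition, so $[L,[\Lambda,\theta]]=[B,\theta]$. Now Proposition \ref{pr:lamba-b} (valid for a general Lefschetz bundle) says that $B$ acts on $V^k$ as the scalar $k-n$; since $D$ is degree preserving and $L$ raises degree by two, $\theta=[D,L]$ maps $V^k$ into $V^{k+2}$, so for any section $w$ of $V^k$ one has $[B,\theta]w=\bigl((k+2)-n\bigr)\theta w-(k-n)\theta w=2\theta w$. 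Thus $[B,\theta]=2\theta$, which yields $[L,[\Lambda,\theta]]=2\theta$ and completes the argument. (If one prefers coordinates, write $D=\sum dt^j\otimes D_j$ and $\theta=\sum dt^j\otimes\theta_j$; the same computation applies componentwise to each pair $D_j,\ \theta_j=[D_j,L]$.)

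I do not expect a genuine obstacle; the only points to watch are the sign conventions in the Jacobi identity and the bookkeeping of which operators raise, lower, or preserve the grading. The one idea is to conjugate the target equation \emph{before} expanding, so that everything collapses to the weight computation $[B,\theta]=2\theta$. If instead one starts directly from $*_s^{-1}\theta*_s=*_s^{-1}[D,L]*_s=[\,D+[\Lambda,\theta],\ \Lambda\,]$, one is left having to evaluate the clumsier double bracket $[[\Lambda,\theta],\Lambda]$ (or to check the formula term by term on $L_r u$ with $u$ primitive, as in the proof of Theorem \ref{th:main}); both routes work but are strictly more laborious.
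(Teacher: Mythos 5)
Your argument is correct, and it takes a genuinely different route from the paper's. The paper starts from $\theta=[D,L]$ and conjugates to get $*_s^{-1}\theta*_s=[*_s^{-1}D*_s,\Lambda]=[D,\Lambda]+[[\Lambda,\theta],\Lambda]$, and then needs a separate Lemma, $*_s^{-1}\theta*_s=-\tfrac12[\Lambda,[\Lambda,\theta]]$, proved by yet another explicit computation on $L_ru$ via the decomposition $\theta u=a+Lb+L^2c$, to close the loop. You instead conjugate the \emph{target} identity by $*_s$, which (using $*_s^2=1$, $*_s\Lambda*_s^{-1}=L$ and Theorem \ref{th:main}) collapses the claim to $[L,[\Lambda,\theta]]=2\theta$; by Jacobi and $[L,\theta]=0$ this is $[B,\theta]=2\theta$, which is immediate from the fact that $B$ acts on $V^k$ as the scalar $k-n$ (Proposition \ref{pr:lamba-b} plus the Lefschetz decomposition) while $\theta=[D,L]$ raises degree by $2$. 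Your route is shorter and purely representation-theoretic: the only input beyond the main theorem is the weight of $\theta$ under the grading operator, and no further manipulation of primitive components is needed. What the paper's longer route buys is the intermediate identity \eqref{eq:main-2}, $*_s^{-1}\theta*_s=-\tfrac12[\Lambda,[\Lambda,\theta]]$, an expression for the conjugated $\theta$ involving only $\Lambda$ and $\theta$ (no $D$), which is of independent interest; note that your result combined with the paper's first display in fact recovers \eqref{eq:main-2}, so nothing is lost logically. The only points worth making explicit in a write-up are (i) that the scalar action of $B$ on all of $V^k$ follows from Proposition \ref{pr:lamba-b} together with Theorem \ref{th:Lef}, and (ii) the componentwise reduction $\theta=\sum dt^j\otimes\theta_j$ you already mention, which sidesteps any sign issues with base one-forms.
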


\begin{proof} The lemma below gives
\begin{equation}\label{eq:main-2}
*_s^{-1} \theta*_s =  -\frac12[\Lambda, [\Lambda, \theta]].
\end{equation}
Since $\Lambda=*_s^{-1} L*_s$ and $\theta=[D, L]=DL-LD$, we have
$$
*_s^{-1} \theta*_s =[*_s^{-1} D*_s, \Lambda]. 
$$
Our main theorem and \eqref{eq:main-2} give
$$
*_s^{-1} \theta*_s =[D+[\Lambda, \theta], \Lambda]=[D,\Lambda]+2 *_s^{-1} \theta*_s,
$$
which gives \eqref{eq:main-1}. 
\end{proof}

\begin{lemma} $*_s^{-1} \theta*_s =  -\frac12[\Lambda, [\Lambda, \theta]]$.
\end{lemma}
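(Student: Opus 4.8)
The plan is to prove the operator identity $*_s^{-1}\theta *_s = -\tfrac12[\Lambda,[\Lambda,\theta]]$ by evaluating both sides on an arbitrary element of the form $L_r u$ with $u\in V^k$ primitive, since by the Lefschetz decomposition theorem such elements span $V$. As in the proof of Theorem \ref{th:main}, the hypothesis $[L,\theta]=0$ forces $L^{n-k+1}\theta u = \theta L^{n-k+1}u = 0$, so the primitive decomposition of $\theta u$ has at most three terms: write $\theta u = a + Lb + L^2 c$ with $a,b,c$ primitive (of degrees $k$, $k-2$, $k-4$ respectively — note $a$ must in fact vanish unless... actually $a$ has the same degree as $\theta u$, which equals $k+2$, so more carefully $\theta u\in V^{k+2}$ and we write $\theta u = a' + Lb' + L^2 c'$ with $a'$ primitive of degree $k+2$, etc.; I will fix the exact bookkeeping when writing it out).

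The key computational steps, in order: first, compute $[\Lambda,\theta]L_r u$ exactly as in Step 2 of the proof of Theorem \ref{th:main}, obtaining an expression $-2L_{r-1}a' + (\cdots)L_r b' + (\cdots)L_{r+1}c'$ with explicit coefficients depending on $M = n-r-k$; second, apply $\Lambda$ once more using Proposition \ref{pr:lamba-b}, and apply $\theta$ (which commutes with $L$) to the already-computed $\Lambda\theta L_r u$ and $\theta\Lambda L_r u$ pieces, to assemble $[\Lambda,[\Lambda,\theta]]L_r u$ as an explicit combination of $L_{r-1}a'$, $L_r b'$, $L_{r+1}c'$; third, compute $*_s^{-1}\theta*_s(L_r u) = *_s\theta*_s(L_r u)$ directly from the definition of $*_s$ on primitive forms, namely $*_s(L_r u) = c\,L_{n-r-k}u$, then apply $\theta$ (commuting it past $L$), then apply $*_s$ again, decomposing $\theta u$ via its primitive decomposition; and fourth, match coefficients of $L_{r-1}a'$, $L_r b'$, $L_{r+1}c'$ on the two sides, which reduces to three scalar identities in $n,k,r$.

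Since this lemma does not involve the connection $D$ acting on coefficients in any essential way — it is purely the commutator identity relating $*_s$, $L$, $\Lambda$ and an operator $\theta$ that raises degree by $2$ and commutes with $L$ — I expect the proof to be somewhat shorter and cleaner than that of Theorem \ref{th:main}: there is no analogue of Steps 4--5 involving $Du$ and $v$. In fact the cleanest route may be to recognize that the three operators $L$, $\Lambda$, $B$ form an $sl_2$-triple (the remark after the definition of $\Lambda, B$ records that Proposition \ref{pr:lamba-b} persists), that $\theta$ commutes with $L$, and that $*_s$ acts on the weight-$m$ eigenspace of $B$ with a sign and an index-flip; one then checks the identity weight-space by weight-space, where it becomes the elementary fact that on an $sl_2$-module the operator implementing the Weyl-group reflection conjugates a lowest-weight-vector-valued polynomial in $L$ in exactly the prescribed way. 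I would present the bare-hands coefficient-matching version as the primary proof for self-containedness.

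The main obstacle will be purely organizational: keeping the degree bookkeeping straight (the primitive pieces $a',b',c'$ of $\theta u$ live in $V^{k+2}$, $V^k$, $V^{k-2}$, so the relevant ``$M$'' shifts by $2$ relative to the one attached to $u$), and correctly tracking the sign factors $(-1)^{j+\cdots+1}$ that appear in $*_s$ for each of the three degrees involved. Once the indices are pinned down, the verification is a finite check of polynomial identities in $n,k,r$ of the same flavor as Step 5 of the preceding proof, and I do not anticipate any conceptual difficulty there.
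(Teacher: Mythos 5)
Your plan is correct and follows essentially the same route as the paper: evaluate both sides on $L_r u$ for $u$ primitive, use $[L,\theta]=0$ to write the primitive decomposition $\theta u=a+Lb+L^2c$ (with the degree bookkeeping you note, $a\in V^{k+2}$ etc.), compute $\Lambda^2\theta$, $\Lambda\theta\Lambda$, $\theta\Lambda^2$ via Proposition \ref{pr:lamba-b} and $*_s\theta*_s$ via the definition of $*_s$ on primitives, and match coefficients. The only difference is organizational (you assemble $[\Lambda,[\Lambda,\theta]]$ from $[\Lambda,\theta]$ rather than expanding it directly into the three monomials), which changes nothing of substance.
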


\begin{proof} For a primitive $k$-form $u$,  we have
$\theta *_s(L_r u):= c L_{n-r-k} \theta  u$. 
By the proof of our main theorem, we can write $
\theta u=a+L b+L^2 c$, where $a, b, c$ are primitive. Thus
$$
*_s\theta*_s(L_r u)=-L_{r-2}a+(M+1)L_{r-1}b-(M+1)(M+2)L_r c, \ \  M:=n-r-k.
$$
On the other hand, notice that
$$
-\frac12 [\Lambda, [\Lambda, \theta]]=\Lambda \theta\Lambda-\frac12 (\Lambda^2\theta+\theta\Lambda^2).
$$
Now 
$$
\Lambda^2 \theta L_r u =\Lambda^2 L_r(a+L b+L^2 c).
$$
thus Proposition \ref{pr:lamba-b} gives
\begin{eqnarray*}
\Lambda^2 \theta L_r u  & = & (M-1)M L_{r-2}a+(r+1) M(M+1)L_{r-1}b \\
   &  & +(r+1)(r+2)(M+1)(M+2) L_{r}c.
\end{eqnarray*}
By a similar argument, we also get
$$
\theta\Lambda^2  L_r u=(M+1)(M+2)(L_{r-2}a+ (r-1)L_{r-1}b +r(r-1)L_{r}c),
$$
and
$$
\Lambda \theta \Lambda  L_r u=(M+1)(M L_{r-2}a+ r(M+1)L_{r-1}b +r(r+1)(M+2)L_{r}c).
$$
Thus $-\frac12 [\Lambda, [\Lambda,\theta]](L_ru)$ equals
$$
-L_{r-2}a +(M+1)L_{r-1}b-(M+1)(M+2)L_r c=*_s\theta*_s(L_r u).
$$
The proof is complete.
\end{proof}

The following proposition can be seen as a generalization of  formula 1 in \cite{OT87}.

\begin{proposition}\label{pr: formula-1} Let $(V, L)$ be a Lefschetz bundle. Let $\sigma$ be a smooth degree one section of ${\rm End}(V)$. If $[L, \sigma]=0$ then $*_s^{-1}\sigma *_s=(-1)^k[\Lambda, \sigma]$ on $V^k$.
\end{proposition}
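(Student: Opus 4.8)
The plan is to imitate the structure of the proof of Theorem~\ref{th:main} and of the Lemma that precedes equation~\eqref{eq:main-2}, but in the simpler setting where $\sigma$ commutes with $L$ and has \emph{degree one} rather than being a connection. First I would reduce, via the Lefschetz decomposition theorem and the fact that $*_s$ is determined by its action on $L_r u$ with $u$ primitive, to proving the identity on an element of the form $L_r u$, $u\in W^k$ primitive (here I use $W^k$ loosely for $V^k$; the bundle notation is already fixed in the excerpt). Because $\sigma$ raises degree by one and commutes with $L$, the relation $L^{n-k+1}u=0$ gives $L^{n-k+1}\sigma u=\sigma L^{n-k+1}u=0$, so $\sigma u$ is a degree $(k+1)$ element killed by $L^{n-k+1}=L^{n-(k+1)+2}$; hence its primitive decomposition has at most two terms, $\sigma u = a + Lb$ with $a\in W^{k+1}$, $b\in W^{k-1}$ both primitive. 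This is the analogue of the ``at most three terms'' step, and it is cleaner here.

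Next I would compute both sides explicitly on $L_r u$. For the left side, since $[L,\sigma]=0$ and $\sigma$ has degree one, $\sigma(L_r u) = L_r(\sigma u) = L_r a + L_{r+1}' b$ where one must be careful with the factorial bookkeeping ($L L_r = (r+1)L_{r+1}$); writing $*_s(L_r u) = c\,L_{n-r-k}u$ with $c=(-1)^{k+\cdots+1}$, one gets $\sigma *_s(L_r u) = c\,L_{n-r-k}(\sigma u) = c\,L_{n-r-k}a + c\,L_{n-r-k}(Lb)$, and then applying $*_s$ to each primitive-graded piece (note $a$ is a primitive $(k+1)$-form and $b$ a primitive $(k-1)$-form, so their $*_s$-images are governed by the defining formula with sign factors $(-1)^{(k+1)+\cdots+1}$ and $(-1)^{(k-1)+\cdots+1}$, which differ from $c$ by controlled signs). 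For the right side I would use Proposition~\ref{pr:lamba-b} to evaluate $\Lambda\sigma(L_r u)$ and $\sigma\Lambda(L_r u) = \sigma\big((n-k-r+1)L_{r-1}u\big) = (n-k-r+1)L_{r-1}(a+Lb)$, again via Proposition~\ref{pr:lamba-b} for the $\Lambda$ and the factorial identities for multiplying by $L$. Subtracting gives $[\Lambda,\sigma](L_r u)$ as an explicit combination of $L_{r-1}a$ (hmm, degree check: $\sigma$ raises by one so $\sigma L_r u\in W^{k+2r+1}$; $L_{r-1}a$ lives in $W^{(k+1)+2(r-1)}=W^{k+2r-1}$, which is off by two — so the correct monomials are $L_r a$ and $L_{r-1}$ applied appropriately; I would recheck the grading carefully at this point) times scalars depending on $n,k,r$.

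The final step is to match the two explicit expressions, i.e. to verify that the scalar coefficient of each primitive monomial on the left equals $(-1)^k$ times the corresponding coefficient on the right. This is a short arithmetic identity in $n,k,r$ — exactly parallel to ``Step~5'' of the proof of Theorem~\ref{th:main} — and I would expect it to come out automatically once the sign $(-1)^k$ is tracked correctly. \textbf{The main obstacle} I anticipate is purely bookkeeping: getting the sign factor $(-1)^k$ right, since it arises from comparing $c=(-1)^{k+\cdots+1}$ with the analogous signs $(-1)^{(k\pm1)+\cdots+1}$ attached to $*_s a$ and $*_s b$, and these telescoping sign sums are easy to mishandle; likewise the factorial normalisation $L L_r=(r+1)L_{r+1}$ must be applied consistently. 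A slicker alternative, which I would try first, is to observe that $-\tfrac12[\Lambda,[\Lambda,\sigma]] = *_s^{-1}\sigma *_s$ can be proved exactly as in the Lemma (the proof there only used that $\sigma=\theta$ has degree... wait, $\theta$ has degree $2$ there, so that shortcut does not transfer verbatim) — so in fact the degree-one case genuinely needs the direct two-term computation above, and that is why the statement is recorded separately.
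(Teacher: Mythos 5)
Your plan is essentially the paper's own proof: reduce to $L_r u$ with $u$ primitive, use $[L,\sigma]=0$ and the degree count to write $\sigma u=a+Lb$ with $a,b$ primitive, then compute $*_s\sigma *_s(L_ru)$ from the definition of $*_s$ and $[\Lambda,\sigma](L_ru)$ from Proposition~\ref{pr:lamba-b} and match coefficients. The degree worry you flag is unfounded: both $*_s^{-1}\sigma *_s$ and $[\Lambda,\sigma]$ lower total degree by one, so the correct monomials are indeed $L_{r-1}a$ and $L_r b$, both in $V^{k+2r-1}$ (you were comparing against $\sigma L_r u\in V^{k+2r+1}$ rather than against $\Lambda\sigma L_r u$).
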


\begin{proof}  For a primitive $k$-form $u$,  we have
$\sigma *_s(L_r u):= c L_{n-r-k} \sigma  u$. 
Since $\sigma$ is degree one, we can write $
\sigma u=a+L b$, where $a, b$ are primitive. Thus
$$
*_s\sigma*_s(L_r u)=(-1)^{k+1} (L_{r-1}a-(M+1)L_{r}b), \ \  M:=n-r-k.
$$
On the other hand, Proposition \ref{pr:lamba-b} gives
$$
\Lambda \sigma L_r u =M L_{r-1} a+(r+1)(M+1)L_r b.
$$
and
$$
\sigma \Lambda L_r u=(M+1)(L_{r-1}a+ rL_{r}b).
$$
Thus 
$$
(-1)^k[\Lambda, \sigma](L_r u)= *_s\sigma*_s(L_r u).
$$
Since $*_s^2=1$, the proposition follows.
\end{proof}

\section{$T$-Hodge theory}

\subsection{Timorin's theorem} Timorin's theorem \cite{Timorin98} is a mixed linear version of the Hodge-Riemann bilinear relation. Let $(V, \omega, J)$ be a $2n$-dimensional real vector space with compatible pair $(\omega, J)$. Let $\alpha_0, \alpha_1, \cdots, \alpha_n$ be $J$-compatible symplectic forms on $V$. Put
$$
 T_k:=\alpha_{k}\wedge \cdots\wedge \alpha_n, \ 0\leq k\leq n, \ \ T_{n+1}:=1.
$$
and
$$
V^k:=\C\otimes \wedge^k V^*.
$$
Timorin introduced the following definition in \cite{Timorin98}.

\begin{definition} We call $u\in V^k$ a $T_k$-primitive form if $k\leq n $ and $u\wedge T_k=0$.
\end{definition}

\begin{theorem}[Timorin's mixed Hodge-Riemann bilinear relation, MHR-n] Let $u$ be a non-zero $T_k$-primitive form. Then 
$Q(u):=(-1)^{k+\cdots+1} T_{k+1} \wedge u\wedge \overline{Ju} >0$.
\end{theorem}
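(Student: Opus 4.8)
The plan is to deform the family $\alpha_0,\dots,\alpha_n$ to the constant family $\alpha_j=\omega$ for all $j$, for which the statement reduces to the ordinary Hodge–Riemann bilinear relation already established in the excerpt (the theorem on primitive $(p,q)$-forms with $E$ trivial, equation \eqref{eq:HRR}), and then to propagate positivity along the deformation by a connectedness argument. Concretely, consider the segment $\alpha_j(s):=(1-s)\alpha_j+s\,\omega$, $s\in[0,1]$; each $\alpha_j(s)$ is again a $J$-compatible symplectic form, since the $J$-compatibility conditions $\alpha(u,Jv)=\alpha(v,Ju)$ and $\alpha(u,Ju)>0$ are preserved under convex combinations. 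This gives a family $T_k(s)=\alpha_k(s)\wedge\cdots\wedge\alpha_n(s)$ and a family of quadratic forms $Q_s$ on the space of $T_k(s)$-primitive forms. At $s=1$ one has $T_k(1)=\omega^{n-k+1}$, $T_k(1)$-primitivity is exactly primitivity in the sense of Definition \ref{de:primitive}, and $Q_1(u)=(-1)^{k+\cdots+1}\omega_{n-k}\wedge u\wedge\overline{Ju}$ (up to the harmless positive constant $1/(n-k)!$), which is positive on nonzero primitive forms by the Hodge–Riemann relation.

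The key structural input, which is what makes the deformation work, is the mixed Lefschetz isomorphism: for $1\le k\le n$ and any $J$-compatible symplectic forms $\alpha_1,\dots,\alpha_n$, the map $V^{k-1}\to V^{2n-k+1}$, $v\mapsto T_k\wedge v$, is an isomorphism, equivalently $V^k=\ker(\,\cdot\wedge T_k)\oplus(\alpha\wedge V^{k-2})$ for a suitable auxiliary form — this is the mixed Hard Lefschetz theorem, which can be proved by the same induction-on-$n$ and splitting-off-a-symplectic-plane argument used for Theorem \ref{th:hlt-1}, or deduced from the nondegeneracy of the pairing $(u,v)\mapsto T_{k+1}\wedge u\wedge\overline{Jv}$ once one knows it is definite somewhere. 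Granting this, the space $P_k(s)$ of $T_k(s)$-primitive forms has locally constant dimension, so it forms a smooth vector bundle over $s\in[0,1]$ (one can trivialize it locally by projecting along $\alpha(s)\wedge V^{k-2}$), and $s\mapsto Q_s$ is a continuous family of nondegenerate Hermitian forms on the fibres: nondegenerate because $Q_s(u)=0$ with $u\in P_k(s)$ would force $u\perp T_{k+1}(s)\wedge\overline{J(\cdot)}$ on all of $V^{k-1}$, contradicting the mixed Lefschetz isomorphism for $T_{k+1}(s)$ on $V^{k-1}$ together with the decomposition $V^k=P_k(s)\oplus\alpha(s)\wedge V^{k-2}$. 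A nondegenerate Hermitian form has locally constant signature, so the signature of $Q_s$ on $P_k(s)$ is constant in $s$; since $Q_1$ is positive definite, so is $Q_0$, which is the assertion for $\alpha_0,\dots,\alpha_n$.

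There is a logical subtlety to dispatch first: proving that $Q_s$ is nondegenerate already presupposes the mixed Hard Lefschetz theorem for the forms $\alpha_1(s),\dots,\alpha_n(s)$ (to get nondegeneracy of the pairing on $V^{k-1}$), so the cleanest route is to prove mixed Hard Lefschetz independently — by the splitting induction — and then run the signature argument; alternatively one runs a simultaneous induction on $n$, proving MHR-$n$ and mixed-HLT-$n$ together, using MHR-$(n-1)$ and the Guillemin-type splitting $(V,\omega)=(V',\omega')\oplus(\text{plane})$ to handle the inductive step, exactly paralleling the proof of Theorem \ref{th: star-pri}. I expect the main obstacle to be this bookkeeping: verifying that $T_k(s)$-primitivity varies in a controlled (constant-rank) way and that the relevant pairings stay nondegenerate throughout the homotopy, i.e. making the ``continuity of signature'' argument fully rigorous, rather than any single computation. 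Everything else — convexity preserving $J$-compatibility, the endpoint evaluation against \eqref{eq:HRR}, and the elementary linear algebra of Hermitian signatures — is routine.
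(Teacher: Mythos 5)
Your proposal matches the paper's proof: the paper deforms $\alpha_j^t=(1-t)\alpha_j+t\omega$ and uses MHL-$n$ to get constant dimension of the space of $T_k$-primitive forms and nondegeneracy of $Q$ on it, so that the signature is constant along the deformation and positivity propagates from the classical Hodge--Riemann relation at $t=1$; the circularity you flag is resolved exactly as in your second alternative, by a simultaneous induction in which MHR-$(n-1)$ implies MHL-$n$ (via restriction to the hyperplanes $\ker\sigma_j$ for $\alpha_{k+1}=\sum i\sigma_j\wedge\bar\sigma_j$). The one caveat is that your first suggested resolution --- proving mixed hard Lefschetz independently by the splitting-off-a-symplectic-plane induction of Theorem \ref{th:hlt-1} --- is not viable, since distinct forms $\alpha_j$ admit no common symplectic basis, so the simultaneous-induction route is the one to take.
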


\begin{proof} We claim that MHR-n follows MHL-n and usual Hodge-Riemann bilinear relation. Notice that MHL-n below implies that the space, say $P_k$, of $T_k$-primitive forms has constant dimension $\dim V^k-\dim V^{k-2}$ and $Q$ is non-degenerate on $P_k$, consider
$$
\alpha_j^t:=(1-t)\alpha_j + t\omega, \ 0\leq t\leq 1,
$$
then the positivity of $Q$ at $t=0$ follows from the positivity of $Q$ at $t=1$ (the usual Hodge-Riemann bilinear relation). 
\end{proof}

\begin{theorem}[Timorin's mixed hard-Lefschetz theorem, MHL-n] For every $0\leq k\leq n$,
$$
u\mapsto u \wedge T_{k+1},
$$
defines an isomorphism from $V^k$ to $V^{2n-k}$.
\end{theorem}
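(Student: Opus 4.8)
The plan is to reduce the statement to an injectivity claim and then run a Timorin-style induction. Since $\dim_{\mathbb C}V^k=\binom{2n}{k}=\binom{2n}{2n-k}=\dim_{\mathbb C}V^{2n-k}$, the map $L_k\colon u\mapsto u\wedge T_{k+1}$ is an isomorphism as soon as it is injective on $V^k$, so it suffices to prove that $u\in V^k$ with $k\le n$ and $u\wedge T_{k+1}=0$ must vanish. Two cases are immediate. For $k=n$ we have $T_{n+1}=1$ and $L_n=\mathrm{id}$. For $k=0$ the map sends a constant $c$ to $c\,\alpha_1\wedge\cdots\wedge\alpha_n$, and since each $\alpha_j$ is a $J$-compatible, hence positive, $(1,1)$-form, the product $\alpha_1\wedge\cdots\wedge\alpha_n$ is a positive $(n,n)$-form, in particular nonzero. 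The remaining range $1\le k\le n-1$ is the substance, and the cross-form example $\alpha_1=e_1^*\wedge f_1^*+e_2^*\wedge f_2^*$, $\alpha_2=e_1^*\wedge f_1^*-e_2^*\wedge f_2^*$ (for which $\alpha_1\wedge\alpha_2=0$) shows that $J$-compatibility, i.e.\ positivity, must be used essentially.

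I would argue by induction on $n$, carrying along simultaneously MHL-$n$, MHR-$n$ and the associated mixed Lefschetz decomposition relative to the nested flag $T_1,T_2,\dots,T_{n+1}$ (recall $T_k=\alpha_k\wedge T_{k+1}$); the deformation $\alpha_j^t=(1-t)\alpha_j+t\omega$ used in the proof of MHR-$n$ above is inserted at each stage, so that once MHL-$n$ is known for every tuple in the convex cone of $J$-compatible forms one obtains MHR-$n$ for that cone as well. The base case $n=1$ is trivial, and the case $\alpha_1=\cdots=\alpha_n=\omega$ is exactly the classical hard Lefschetz theorem, Theorem \ref{th:hlt-1}. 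For the inductive step one uses that $\alpha_n$ is $J$-compatible to choose $e\ne 0$ and a suitable multiple $f$ of $Je$ so that $W=\mathrm{Span}_{\mathbb R}\{e,f\}$ is $J$-invariant and $\alpha_n$-nondegenerate, giving a $J$-invariant splitting $V=W\oplus W'$ with $W'=W^{\perp_{\alpha_n}}$ of real dimension $2(n-1)$; then $\alpha_1|_{W'},\dots,\alpha_{n-1}|_{W'}$ is an $(n-1)$-tuple of $J$-compatible symplectic forms on $W'$, to which the inductive hypotheses apply.

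The main obstacle is that $\alpha_1,\dots,\alpha_{n-1}$ do not respect the splitting $V=W\oplus W'$: each acquires ``cross terms'' in $W^*\otimes(W')^*$ and in $\wedge^2W^*$, and these must be tracked through the wedge product with $T_{k+1}$ and through the Lefschetz decomposition; in fact one checks easily that no common complement to $W$ can be chosen $\perp$ to $W$ for all $\alpha_j$ simultaneously once $n\ge 3$, so the cross terms are genuinely unavoidable. This is precisely where positivity enters: MHR-$(n-1)$ on $W'$, together with the classical Hodge--Riemann relations, furnishes a \emph{positive definite}, hence nondegenerate, bilinear form on the relevant space of primitive forms on $W'$, and this nondegeneracy is what forbids a nonzero element of $\ker L_k$ after the cross terms have been organized. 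The nested structure $T_k=\alpha_k\wedge T_{k+1}$ is exactly what makes the bookkeeping of the mixed Lefschetz decomposition close up inductively; carrying out that bookkeeping carefully is the only real work.
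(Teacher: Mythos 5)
Your overall inductive architecture is the right one and matches the paper's: a joint induction on $n$ in which MHL-$n$ yields MHR-$n$ (via the deformation $\alpha_j^t=(1-t)\alpha_j+t\omega$ and nondegeneracy of $Q$), and MHR-$(n-1)$ is supposed to yield MHL-$n$. The dimension count reducing the statement to injectivity, the treatment of $k=0,n$, and the remark that positivity is essential (your $\alpha_1\wedge\alpha_2=0$ example) are all fine. But the one implication that carries all the content, namely MHR-$(n-1)\Rightarrow$ MHL-$n$, is not actually proved: you set up a splitting $V=W\oplus W'$ with $W$ a $J$-invariant $\alpha_n$-symplectic $2$-plane, observe that the $\alpha_j$ for $j<n$ acquire unavoidable cross terms, and then assert that MHR-$(n-1)$ on $W'$ ``forbids a nonzero element of $\ker L_k$ after the cross terms have been organized,'' with the organization itself deferred as ``bookkeeping.'' That bookkeeping is precisely the theorem; nothing in the proposal indicates how the mixed Lefschetz decomposition on $W'$ interacts with the cross terms of $n-1$ \emph{different} forms $\alpha_1,\ldots,\alpha_{n-1}$, and it is not at all clear that this route closes (the component-by-component cancellation that works for a single $\omega$ in Theorem \ref{th:hlt-1} relies on all forms being simultaneously block-diagonal, which you yourself note fails here for $n\ge 3$).

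The paper's proof of this implication uses a different and much shorter mechanism that your proposal is missing: instead of splitting off a $2$-plane transverse to $\alpha_n$, one diagonalizes the \emph{next} form in the flag, $\alpha_{k+1}=\sum_{j=1}^n i\sigma_j\wedge\bar\sigma_j$, and restricts $u$ to the complex hyperplanes $H_j=\ker\sigma_j$. If $u\wedge T_{k+1}=0$ then each restriction $u|_{H_j}$ is $T_{k+1}|_{H_j}$-primitive, so MHR-$(n-1)$ gives $Q_j(u)\ge 0$; the identity $0=\pm T_{k+1}\wedge u\wedge\overline{Ju}=\sum_j Q_j(u)\wedge(i\sigma_j\wedge\bar\sigma_j)$ then forces every $Q_j(u)=0$, hence $u|_{H_j}=0$ for all $j$ by the definiteness in MHR-$(n-1)$, hence $u\wedge\alpha_{k+1}=0$ and $u=0$ since $\deg u\le n-1$. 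Restriction to hyperplanes entirely avoids the cross-term problem because no decomposition of the remaining $\alpha_j$ is ever needed. I would either adopt this hyperplane-restriction argument or supply in full the cancellation scheme your splitting requires; as written, the inductive step is a gap.
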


\begin{proof} Since MHR-1 is true, it suffices to show MHR-(n-1) implies MHL-n. Assume that $u\in V^k$, $k\leq n-1$. If $u \wedge T_{k+1}=0$ then
$$
u|_{H}\wedge T_{k+1}|_{H} =0,
$$
for every hyperplane $H$. Thus if $u \wedge T_{k+1}=0$ then
$u|_{H}$ is $T_{k+1}|_{H}$-primitive for every $H$. Let us write
$$
\alpha_{k+1}=\sum_{j=1}^n i \sigma_j \wedge \bar \sigma_j, \  H_j:=\ker \sigma_j.
$$ 
Then MHR-(n-1) gives
$$
Q_j(u):=(-1)^{k+\cdots+1} T_{k+2}|_{H_j} \wedge u|_{H_j}\wedge \overline{Ju|_{H_j}} \geq 0.
$$
If $u \wedge T_{k+1}=0$ then 
$$
0=(-1)^{k+\cdots+1} T_{k+1} \wedge u\wedge \overline{Ju}=\sum Q_j(u) \wedge (i\sigma_j\wedge \bar \sigma_j),
$$
which implies each $Q_j(u) = 0$. Thus $u|_{H_j}=0$ for every $1\leq j \leq n$, which gives $u\wedge \alpha_{k+1}=0$, thus $u=0$ since $\deg u \leq n-1$.
\end{proof}

\subsection{Hodge star operator on $V_T$} Let $(E, h_E)$ be a holomorphic vector bundle over an $n$-dimensional complex manifold $(X, \omega)$. Denote by $V^{p,q}$ the space of smooth $E$-valued $(p,q)$-forms with compact support on $X$. Put
$$
V:=\oplus_{0\leq p,q \leq n} V^{p,q}, \ V^k:=\oplus_{ p+q = k} V^{p,q}.
$$
Fix $0\leq m\leq n$ and smooth positive $(1,1)$-forms $\alpha_{m+1}, \cdots, \alpha_n$ on $X$. Consider
$$
f_T: u\mapsto T\wedge u, \ u\in V.
$$
where 
$$
T:=\alpha_{m+1} \wedge \cdots \wedge \alpha_n,\ T:=1, \ \text{if}\ m=n.
$$
We call the Hodge theory on 
$$
{\rm Im} f_T:=\{T\wedge u: u\in V\},
$$
the $T$-Hodge theory. Put
$$
V_T^{p,q}:= f_T(V^{p,q}), \ V_T:=\oplus_{0\leq p,q \leq n} V_T^{p,q}, \ V_T^k:=\oplus_{ p+q = k} V_T^{p,q}.
$$
We have
$$
V_T=\oplus_{k=0}^{2m} V_T^{k}=\oplus_{0\leq p,q \leq m} V_T^{p,q},
$$
and
$$
L: u\mapsto \omega\wedge u, \ u\in V_T,
$$
maps $V_T^{p,q}$ to $V_T^{p+1,q+1}$. Timorin's mixed hard-Lefschetz theorem gives:

\begin{theorem}\label{th: T-HL} For every $0\leq k\leq m$,
$$
L^{m-k}: u\mapsto u\wedge\omega^{m-k},
$$
defines an isomorphism from $V^k_T$ to $V_T^{2m-k}$.
\end{theorem}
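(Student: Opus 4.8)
The plan is to deduce the statement fibrewise from Timorin's mixed hard-Lefschetz theorem (MHL-$n$). The operator $L^{m-k}$ is a homomorphism of vector bundles over $X$ (wedging with the smooth forms $\omega^{m-k}$ and $T$), so it is enough to check that it is a fibrewise isomorphism onto the corresponding fibre of $V_T^{2m-k}$; the passage back to compactly supported sections is then automatic, since a fibrewise isomorphism between smooth vector bundles has a smooth inverse. Fix $x\in X$ and put $W:=T_xX$, a real $2n$-dimensional space with the compatible pair $(\omega|_x,J_x)$; since a positive $(1,1)$-form restricts to a $J$-compatible symplectic form at every point, the forms $\omega|_x,\alpha_{m+1}|_x,\dots,\alpha_n|_x$ are all $J_x$-compatible symplectic forms on $W$. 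The bundle $E$ tensors in trivially and is harmless.

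First I would unwind the degrees. In $\wedge^\bullet W^*\otimes\C$ (tensored with $E_x$) the fibre of $V_T^k$ is $\{T|_x\wedge v: v\in\wedge^kW^*\otimes E_x\}$, of total degree $k+2(n-m)$, and that of $V_T^{2m-k}$ is $\{T|_x\wedge v': \deg v'=2m-k\}$, of total degree $2n-k$. Because $\omega^{m-k}$ and $T$ have even degree, for $v$ of degree $k$,
$$
L^{m-k}\bigl(T|_x\wedge v\bigr)=\omega^{m-k}|_x\wedge T|_x\wedge v=v\wedge\bigl(\omega^{m-k}|_x\wedge T|_x\bigr).
$$
Now I apply MHL-$n$ on $W$ with the choice $\alpha_0=\cdots=\alpha_m:=\omega|_x$ and $\alpha_{m+1},\dots,\alpha_n$ the given forms at $x$; then $T_{k+1}=\alpha_{k+1}\wedge\cdots\wedge\alpha_n=\omega^{m-k}|_x\wedge T|_x$, and MHL-$n$ asserts precisely that $\Phi_x:v\mapsto v\wedge\bigl(\omega^{m-k}|_x\wedge T|_x\bigr)$ is a linear isomorphism $\wedge^kW^*\otimes E_x\xrightarrow{\ \sim\ }\wedge^{2n-k}W^*\otimes E_x$. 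Letting $x$ vary, the $\Phi_x$ assemble into a smooth bundle isomorphism, hence an isomorphism $\Phi:V^k\to V^{2n-k}$ of spaces of compactly supported $E$-valued forms, and by the displayed identity $\Phi=L^{m-k}\circ f_T$.

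The theorem now follows formally. Injectivity: if $w\in V_T^k$ with $L^{m-k}w=0$, write $w=f_T(v)$ with $v\in V^k$; then $\Phi(v)=L^{m-k}f_T(v)=0$, so $v=0$ and $w=0$. Surjectivity: given $w'\in V_T^{2m-k}$, write $w'=f_T(v')=T\wedge v'\in V^{2n-k}$; since $\Phi$ is onto, pick $v\in V^k$ with $\Phi(v)=w'$, and then $w:=f_T(v)\in V_T^k$ satisfies $L^{m-k}(w)=\Phi(v)=w'$. Hence $L^{m-k}:V_T^k\to V_T^{2m-k}$ is bijective. The only real content is the fibrewise input from MHL-$n$, combined with the key observation that the $m-k$ extra copies of $\omega$ can be absorbed into Timorin's forms so that wedging by $T_{k+1}$ becomes exactly $L^{m-k}\circ f_T$; I do not anticipate a genuine obstacle, the only thing to watch being the distinction between the abstract grading of $V_T$ and the actual total form-degree of its elements.
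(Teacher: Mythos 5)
Your proof is correct and follows essentially the same route as the paper: both reduce the statement to Timorin's mixed hard Lefschetz theorem applied with $\alpha_0=\cdots=\alpha_m=\omega$, so that wedging with $T_{k+1}=\omega^{m-k}\wedge T$ gives the isomorphism $\Phi=L^{m-k}\circ f_T:V^k\to V^{2n-k}$, from which bijectivity of $L^{m-k}$ on $V_T$ follows formally. Your version is merely more explicit about the fibrewise reduction and the degree bookkeeping.
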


\begin{proof} By Timorin's theorem, we know that 
$$
u\mapsto T\wedge u, \ u\in V^k
$$
is injective. Thus $f_T$ defines an isomorphism from $V^k$ to  $V^k_T$. Again by Timorin's theorem, we have the following isomorphism 
$$
A: u\mapsto u\wedge T\wedge \omega^{m-k},
$$
from $V^k$ to $V^{2n-k}$. Thus $L^{m-k}=A\circ f^{-1}_T$ is an isomorphism.
\end{proof}

\begin{definition} We call $u\in V^k_T$ a primitive $k$-form if $k\leq m$ and $L^{m-k+1} u=0$.
\end{definition}

The proof of Theorem \ref{th:Lef} implies:

\begin{theorem}\label{th: T-lef} Every $u\in V_T^k$ has a unique decomposition as follows:
\begin{equation}
u=\sum L_r  u^r, \ L_r:=\frac{L^r}{r!}.
\end{equation}
where each $u^r$ is a primitive form in $V_T^{k-2r}$.
\end{theorem}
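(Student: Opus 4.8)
The plan is to recognize $(V_T, L)$ as a Lefschetz bundle in the sense of Section 2, with the integer $m$ playing the role of $n$, and then rerun the argument in the proof of Theorem \ref{th:Lef} word for word, since that proof uses nothing about a Lefschetz bundle beyond its defining axioms (the graded $L$ together with the isomorphisms $L^{j}\colon V^{m-j}\to V^{m+j}$).

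The first point to check is that $L$ maps $V_T$ into itself with the correct degree shift. An element of $V_T^{p,q}=f_T(V^{p,q})$ has the shape $T\wedge u$ with $u\in V^{p,q}$, hence genuine bidegree $(p+n-m,\,q+n-m)$; wedging with $\omega$ raises this to $(p+n-m+1,\,q+n-m+1)$, which equals $T\wedge(\omega\wedge u)\in V_T^{p+1,q+1}$ when $p,q<m$ and vanishes identically as soon as $p=m$ or $q=m$ (the holomorphic or antiholomorphic degree would exceed $n$). In particular $L(V_T^k)\subset V_T^{k+2}$ for $0\le k\le 2m-2$ and $L(V_T^{2m-1})=L(V_T^{2m})=0$. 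This bidegree bookkeeping is the only step requiring genuine care, precisely because the label $(p,q)$ on $V_T^{p,q}$ is shifted from the actual bidegree of the forms it contains; there is no real obstacle beyond it.

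The remaining axiom, that $L^{m-k}\colon V_T^{m-k}\to V_T^{m+k}$ is an isomorphism for $0\le k\le m$ (equivalently $L^{j}\colon V_T^{m-j}\to V_T^{m+j}$ for $0\le j\le m$), is exactly the content of Theorem \ref{th: T-HL}, already deduced from Timorin's mixed hard-Lefschetz theorem. Thus $(V_T,L)$ is a Lefschetz bundle with $n$ replaced by $m$, and the notion of primitivity introduced just above this statement coincides with the abstract one.

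Finally I would invoke the proof of Theorem \ref{th:Lef}. For existence, given $u\in V_T^k$ with $k\le m$, use the isomorphism $L^{m-k+2}\colon V_T^{k-2}\to V_T^{2m-k+2}$ to solve $L^{m-k+2}\hat u=L^{m-k+1}u$; then $u^0:=u-L\hat u$ is primitive and $u=u^0+L\hat u$, and iterating on $\hat u$ produces $u=\sum L_r u^r$ with each $u^r\in V_T^{k-2r}$ primitive. For $k>m$ one first transports the problem along the isomorphism $L^{k-m}\colon V_T^{2m-k}\to V_T^{k}$. Uniqueness is also as there: from $0=\sum_{r=0}^{j}L_r u^r$ one applies $L_{m-k+j}$ to obtain $L_{m-k+j}L_j u^j=0$, whence $u^j=0$ by the isomorphism axiom, and one inducts downward on $j$. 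Since every ingredient is now a consequence of the Lefschetz bundle structure of $(V_T,L)$, the proof is complete.
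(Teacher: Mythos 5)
Your proposal is correct and follows exactly the route the paper intends: the paper's proof of Theorem \ref{th: T-lef} consists precisely of observing that $(V_T,L)$ satisfies the Lefschetz bundle axioms (via Theorem \ref{th: T-HL}) so that the proof of Theorem \ref{th:Lef} carries over verbatim with $m$ in place of $n$. Your extra bidegree bookkeeping for $L(V_T^{2m-1})=L(V_T^{2m})=0$ is a correct and welcome detail that the paper leaves implicit.
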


\begin{definition} We call the following $\C$-linear map $*_s: V_T\to V_T$ defined by
$$
*_s(L_r u):=(-1)^{k+\cdots+1} L_{m-r-k} u,
$$
where $u\in V_T^k$ is primitive, the Lefschetz star operator on $V_T$. 
\end{definition}

In case $m=n$,  the Lefschetz star operator above is just the symplectic star operator.

\begin{definition} Put $\Lambda=*_s^{-1} L*_s$, $B:=[L, \Lambda]$. We call $(L,\Lambda, B)$ the $sl_2$-triple on $V_T$.
\end{definition}

Since $J$ commutes with $f_T$, the Weil-operator is also well defined on $V_T$, we shall also denote it by $J$. 

\begin{definition} We call $*:=*_s\circ J$ the Hodge star operator on $V_T$. 
\end{definition}

Timorin's mixed Hodge-Riemann bilinear relation gives:

\begin{theorem}\label{th: T-metric} Put
$$
(u,v):=\int_X\{f_T^{-1}(u), *v\}_{h_E}, \ \ u, v\in V^k_T, \ 0\leq k\leq m,
$$
and
$$
(u,v):=\int_X\{u, f_T^{-1}(*v)\}_{h_E}, \ \ u, v\in V^k_T, \ m\leq k\leq 2m.
$$
Then $(u,v)$ is a Hermitian inner product on $V_T$.
\end{theorem}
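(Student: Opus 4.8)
The plan is to collapse the two defining formulas into a single fibrewise pairing, Lefschetz‑decompose it, match the diagonal against Timorin's form, and integrate. First I would note that the two formulas agree: since $*$ maps $V_T$ into $V_T={\rm Im}\,f_T$, we have $*v=T\wedge f_T^{-1}(*v)$, and $T=\alpha_{m+1}\wedge\cdots\wedge\alpha_n$ is a \emph{real} form of \emph{even} degree $2(n-m)$, so $\{T\wedge a,b\}_{h_E}=\{a,T\wedge b\}_{h_E}$; taking $a=f_T^{-1}(u)$ gives $\{u,f_T^{-1}(*v)\}_{h_E}=\{f_T^{-1}(u),*v\}_{h_E}$ for all $u,v\in V_T^k$, $0\le k\le 2m$. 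This top form may be written $h_x(u,v)\,\omega_n$, so $(u,v)=\int_X h_x(u,v)\,\omega_n$, and it suffices to show that $h_x$ is a positive‑definite Hermitian form on each fibre $(V_T^k)_x$.

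Fibrewise, write $u=\sum_j L_j u^j$ and $v=\sum_j L_j v^j$ with $u^j,v^j$ primitive in $V_T^{k-2j}$ (Theorem \ref{th: T-lef}). Using that $f_T^{-1}$ commutes with $L$, that $J\omega=\omega$, that $*=*_s\circ J$, and the definition $*_s(L_r w)=(-1)^{d+\cdots+1}L_{m-r-d}w$ for primitive $w$ of degree $d$, a short computation — pulling the scalar out of $\{\,\cdot\,,\,\cdot\,\}_{h_E}$ and using the reality of $\omega$ and $T$ — gives
\[
h_x(L_j u^j,L_{j'}v^{j'})\,\omega_n=c_{j,j'}\sum_{\alpha,\beta}h_E(e_\alpha,e_\beta)\,\omega^{\,j+j'+m-k}\wedge T\wedge (f_T^{-1}u^j)^\alpha\wedge\overline{(f_T^{-1}v^{j'})^\beta}
\]
for an explicit constant $c_{j,j'}$. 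Now primitivity of $u^j$ in $V_T^{k-2j}$, i.e.\ $\omega^{m-(k-2j)+1}\wedge u^j=0$, is exactly $T_{k-2j}$‑primitivity of $f_T^{-1}u^j$ in Timorin's sense, because $T_{k-2j}=\omega^{m-(k-2j)+1}\wedge T$ once we set $\alpha_0=\cdots=\alpha_m:=\omega$. The $\omega$‑power above is $j+j'+m-k$, which reaches that bound exactly when $j'>j$ (so the factor $\omega^{\,j+j'+m-k}\wedge T\wedge f_T^{-1}u^j$ vanishes), and by the same reasoning applied to the conjugated factor it vanishes when $j>j'$. Hence all off‑diagonal terms vanish and $h_x(u,v)=\sum_j h_x(L_j u^j,L_j v^j)$.

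For the diagonal, $2j+m-k=m-(k-2j)$, so $\omega^{\,2j+m-k}\wedge T=T_{(k-2j)+1}$; substituting $Jw=i^{p-q}w$ on $(p,q)$‑forms back into $*=*_s\circ J$, and discarding cross‑bidegree terms (which vanish by type, since $T_{(k-2j)+1}$ has type $(n-k+2j,n-k+2j)$), the constant $c_{j,j}$ and the conjugation in $\{\,\cdot\,,\,\cdot\,\}_{h_E}$ reorganize into Timorin's normalization:
\[
h_x(L_j u^j,L_j v^j)=\frac{1}{j!\,(m-k+j)!}\sum_{\alpha,\beta}h_E(e_\alpha,e_\beta)\,\frac{(-1)^{(k-2j)+\cdots+1}\,T_{(k-2j)+1}\wedge(f_T^{-1}u^j)^\alpha\wedge\overline{J(f_T^{-1}v^j)^\beta}}{\omega_n},
\]
the summand being the polarization of Timorin's form $Q$ on $T_{k-2j}$‑primitive forms. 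By Timorin's mixed Hodge--Riemann relation, $Q$ is positive‑definite Hermitian there; tensoring with the positive Hermitian metric $h_E$ keeps it positive Hermitian on $E$‑valued $T_{k-2j}$‑primitive forms, and $f_T^{-1}u^j$ lies in that space. Since $j!\,(m-k+j)!>0$, each $h_x(L_j u^j,L_j v^j)$ is Hermitian in $(u^j,v^j)$ and strictly positive when $u^j\ne 0$; summing over $j$, $h_x$ is Hermitian with $h_x(u,u)\ge 0$, and $h_x(u,u)=0$ forces all $u^j=0$, i.e.\ $u=0$. Integrating over $X$ (a section is zero iff it vanishes at every point) shows $(\cdot,\cdot)$ is a Hermitian inner product on each $V_T^k$, hence on $V_T=\oplus_k V_T^k$ under the orthogonal decomposition implicit in the statement.

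I expect the genuine work to be the scalar/sign bookkeeping concentrated in the last two displays: keeping track of $(-1)^{(k-2j)+\cdots+1}$ from the Lefschetz star operator together with the powers of $i$ from the Weil operator $J$ and the conjugation in $\{\,\cdot\,,\,\cdot\,\}_{h_E}$, so that precisely Timorin's $Q$ — rather than a sign‑ or $J$‑twisted variant of it — emerges, and checking that the combinatorial constant $c_{j,j}$ is positive after all cancellations. The rest is structural; note in particular that orthogonality of the Lefschetz decomposition here is forced directly by the $\omega$‑powers produced by $*_s$, so neither the adjointness of $\Lambda=*_s^{-1}L*_s$ nor a separate proof of Hermitian symmetry is needed.
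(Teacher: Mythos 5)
Your argument is correct and is exactly the route the paper intends: the paper states this theorem with no proof beyond the attribution to Timorin's mixed Hodge--Riemann relation, and your reduction --- Lefschetz-decompose, kill the off-diagonal blocks by counting powers of $\omega$ against primitivity, and identify each diagonal block with $h_E$ tensored with Timorin's form $Q$ on $T_{k-2j}$-primitive forms --- is the natural way to make that precise. The only imprecision is cosmetic: in your first display the Weil operator $J$ should still be acting on $(f_T^{-1}v^{j'})^\beta$ (so $c_{j,j'}$ is really type-dependent rather than a single scalar), but you restore it correctly in the diagonal computation and it plays no role in the off-diagonal vanishing.
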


\begin{definition} Let us define $(u, v)_T$ such that
$$
\{f_T^{-1}(u), *v\}_{h_E}=(u, v)_T ~ \omega_m \wedge T,  \ \ u, v\in V^k_T, \ 0\leq k\leq m,
$$
and
$$
\{u, f_T^{-1}(*v)\}_{h_E}=(u, v)_T ~ \omega_m \wedge T, \ \ u, v\in V^k_T, \ m\leq k\leq 2m.
$$
We call $(u,v)_T$ the pointwise Hermitian inner product of $u, v$ in $V_T$.
\end{definition}

\textbf{Remark}: If $m=n$ then $T=1$ and $(u, v)_T$ is just the pointwise $(\omega, J, h_E)$-inner product. Moreover, our Hermitian metric in Theorem \ref{th: T-metric} is compatible with the current norm on $V_T^{p,0}$ defined by Berndtsson-Sibony in \cite{BS}. 

\subsection{K\"ahler identity in $T$-Hodge theory}

Let 
$$
D:=\dbar+\partial^E,
$$
be the Chern connection on $(E, h_E)$. Assume that $T$ is $d$-closed, then $D u\in V_T$ if $u\in V_T$. Let $D^*$, $\dbar^*$, $(\partial^E)^*$ be the adjoint of
$$
D, \ \dbar, \  \partial^E: V_T\to V_T,
$$
respectively. We shall use Theorem \ref{th:main} and Proposition \ref{pr: formula-1} to prove the following $T$-geometry generalization of the Demailly-Griffiths-K\"ahler identity (see page 307 in \cite{Demailly12}).

\begin{theorem}\label{th: T-KID} If $T$ is $d$-closed then $[\dbar^*, L]=i(\partial^E+[\Lambda, [\partial^E, L]])$ on $V_T$.
\end{theorem}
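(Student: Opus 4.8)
The plan is to reduce the identity to Theorem~\ref{th:main} and Proposition~\ref{pr: formula-1} by transporting everything to the Lefschetz star operator on $V_T$, using the Hodge star $*=*_s\circ J$ of Theorem~\ref{th: T-metric} as the bridge between $\dbar^*$ and $\partial^E$. First I would record the structural preliminaries. Because $T$ is $d$-closed and of even degree, the Chern connection $D=\dbar+\partial^E$, and each of $\dbar$ and $\partial^E$, map $V_T$ to $V_T$, commute with $f_T$, and commute with $J$. Put $\theta:=[D,L]$ on $V_T$; since $D\omega=d\omega$ one gets $\theta u=d\omega\wedge u$, so $[L,\theta]=0$ because $\omega$ has even degree, and likewise the pieces $\theta':=[\partial^E,L]=\partial\omega\wedge\,\cdot\,$ and $\theta'':=[\dbar,L]=\dbar\omega\wedge\,\cdot\,$ commute with $L$.

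Next come the two reductions. On the one hand, Theorem~\ref{th: T-metric}, Stokes' theorem and the Chern-connection identity $dh_E(\cdot,\cdot)=h_E(D\cdot,\cdot)+h_E(\cdot,D\cdot)$ give the Hermitian adjunction formula expressing $\dbar^*$ on $V_T$ through $\partial^E$ and the conjugate-linear Hodge star; since the latter differs from $*_s$ only by $J$ and conjugation and intertwines $L$ with $\Lambda$, computing $[\dbar^*,L]$ amounts to computing how $*_s$-conjugation acts on the covariant derivatives and the algebraic (wedge) operators that build up $\partial^E$. On the other hand, this is exactly what our machinery controls: Theorem~\ref{th:main}, applied to a degree-preserving connection $\nabla$ on the bundle of $E$-valued forms, says the $*_s$-conjugate of each covariant derivative $\nabla_j$ equals $\nabla_j+[\Lambda,\vartheta_j]$ with $\vartheta_j=[\nabla_j,L]$, and Proposition~\ref{pr: formula-1} turns the $*_s$-conjugate $e(\alpha)\mapsto(-1)^k[\Lambda,e(\alpha)]$ of each wedge operator into a contraction. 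Assembling these — together with the identity $[\iota_v,L]=e(\iota_v\omega)$, which manufactures the $\partial^E$ on the right side out of the contraction part, and the $sl_2$-fact that $B$ acts as $(\mathrm{degree}-m)$ (Proposition~\ref{pr:lamba-b}), which simplifies the resulting double bracket — the torsion contributions organize precisely into $[\Lambda,\theta']=[\Lambda,[\partial^E,L]]$, the stray $J$ and conjugation supply the overall factor $i$, and one obtains $[\dbar^*,L]=i(\partial^E+[\Lambda,[\partial^E,L]])$; the final numerical identity among the coefficients closes just as in Step~5 of the proof of Theorem~\ref{th:main}.

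\textbf{The main obstacle.} Theorem~\ref{th:main} is stated for a \emph{degree-preserving} connection, whereas $D=\dbar+\partial^E$ raises the total form degree by one and hence $\theta=[D,L]$ is a degree-$(+3)$ operator; so Theorem~\ref{th:main} cannot be applied to $D$ itself, and one must instead apply it to a genuinely degree-preserving connection $\nabla$ (for instance the Chern connection on $E$ tensored with a metric connection on $\Lambda^{\bullet}T_X^{*}$), for which $[\,\cdot\,,L]$ really is degree $2$, and then recover the statements for $\dbar,\partial^E$ (and for $\dbar^{*},(\partial^E)^{*}$) by writing each as $\nabla$ followed by wedging (resp. contraction), the discrepancy being once more a wedge with components of $d\omega$ that Proposition~\ref{pr: formula-1} absorbs. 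The real work — more than in any single conceptual step — is bookkeeping: keeping every sign $(-1)^{k+\cdots+1}$ and every power of $i$ straight through the Lefschetz decomposition, so that the torsion terms coalesce into the single bracket $[\Lambda,[\partial^E,L]]$ and the constant in front comes out to be exactly $i$.
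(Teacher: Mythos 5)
Your proposal is correct and follows essentially the same route as the paper: write $\dbar^*=-*\partial^E*=(-1)^{k+1}i\,*_s\partial^E*_s$ via $*=*_s\circ J$, split $\partial^E=\sum_j\partial^E_j\sigma_j$ into degree-preserving covariant derivatives composed with wedge operators $\sigma_j=dz^j\wedge$, conjugate the former by $*_s$ using Theorem~\ref{th:main} (and \eqref{eq:main-1}) and the latter using Proposition~\ref{pr: formula-1}, and reassemble the torsion terms into $[\Lambda,[\partial^E,L]]$ via $\sum\theta_j\sigma_j=[\partial^E,L]$. The "main obstacle" you flag — that Theorem~\ref{th:main} only applies to degree-preserving connections, forcing the factorization of $\partial^E$ — is exactly the point the paper's proof is built around.
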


\begin{proof} Since $*=*_s\circ J=J\circ *_s$ and $*_s^2=1$, we have
$$
\dbar^*=-*\partial^E *=(-1)^{k+1} i*_s\partial^E*_s=(-1)^{k+1} i \sum_{j=1}^n (*^{-1}_s \partial^E_j*_s) (*^{-1}_s \sigma_j*_s),
$$
on $V_T^k$, where $\sigma_j:=dz^j \wedge$. Thus
$$
[\dbar^*, L]=(-1)^{k+1} i \sum_{j=1}^n (*^{-1}_s \partial^E_j*_s) (*^{-1}_s \sigma_j*_s)L- L(*^{-1}_s \partial^E_j*_s) (*^{-1}_s \sigma_j*_s).
$$
Now Proposition \ref{pr: formula-1} implies $[*^{-1}_s \sigma_j*_s, L]=(-1)^{k+1}\sigma_j$, thus
\begin{eqnarray*}
[\dbar^*, L]  & = &  (-1)^{k+1} i \sum_{j=1}^n (*^{-1}_s \partial^E_j*_s) ((-1)^{k+1}\sigma_j +L*^{-1}_s \sigma_j*_s)- L(*^{-1}_s \partial^E_j*_s) (*^{-1}_s \sigma_j*_s) \\
& = &   i \sum_{j=1}^n (*^{-1}_s \partial^E_j*_s) \sigma_j +(-1)^{k+1} [*^{-1}_s \partial^E_j*_s, L] (*^{-1}_s \sigma_j*_s).
\end{eqnarray*}
\eqref{eq:main-1} gives $[*^{-1}_s \partial^E_j*_s, L]=-\theta_j$, where $\theta_j:=[\partial^E_j, L]$, thus our main theorem gives
$$
[\dbar^*, L]=i \partial^E +i\sum [\Lambda,\theta_j]\sigma_j + (-1)^k\theta_j (*^{-1}_s \sigma_j*_s).
$$
Now it suffices to show 
\begin{equation}\label{eq:DGK}
\sum [\Lambda,\theta_j]\sigma_j + (-1)^k\theta_j (*^{-1}_s \sigma_j*_s)=[\Lambda, [\partial^E, L]].
\end{equation}
Since $\sum \theta_j \sigma_j=[\partial^E, L]$, we have
\begin{equation}
\sum [\Lambda,\theta_j]\sigma_j=\Lambda [\partial^E, L]-\sum \theta_j\Lambda \sigma_j.
\end{equation}
Proposition \ref{pr: formula-1} gives
\begin{equation}
(-1)^k\theta_j (*^{-1}_s \sigma_j*_s)=\theta_j \Lambda \sigma_j-\theta_j \sigma_j \Lambda.
\end{equation}
Thus the left hand side of \eqref{eq:DGK} can be written as
\begin{equation}
\Lambda [\partial^E, L]-[\partial^E,L]\Lambda,
\end{equation}
which equals the right hand side of \eqref{eq:DGK}. 
\end{proof}

\begin{theorem}\label{th:T-kahler} Assume that both $T$ and $\omega$ are $d$-closed, then 
$$
D^*= [\Lambda,D^c], \ (D^c)^*=[D, \Lambda],
$$
on $V_T$, where $D^c:=i\dbar-i\partial^E$. 
\end{theorem}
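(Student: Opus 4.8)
The plan is to deduce both identities from Theorem~\ref{th: T-KID}, from its $\partial^E\leftrightarrow\dbar$ counterpart, and from a passage to adjoints with respect to the Hermitian metric of Theorem~\ref{th: T-metric}; the new hypothesis $d\omega=0$ will serve only to annihilate the ``curvature'' correction terms. First I would simplify Theorem~\ref{th: T-KID} under $d\omega=0$. Since $\omega$ is a scalar $(1,1)$-form (no $E$-component, even total degree), the Leibniz rule gives $[\partial^E,L]=(\partial\omega)\wedge(\cdot)$ and $[\dbar,L]=(\dbar\omega)\wedge(\cdot)$ on $V_T$, and $d\omega=0$ forces $\partial\omega=\dbar\omega=0$, so both brackets vanish. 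Hence $[\Lambda,[\partial^E,L]]=0$ and Theorem~\ref{th: T-KID} reduces to
\[
[\dbar^*,L]=i\,\partial^E\qquad\text{on }V_T.
\]

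Next I would establish the companion identity
\[
[(\partial^E)^*,L]=-i\,\dbar\qquad\text{on }V_T.
\]
This is obtained by running the proof of Theorem~\ref{th: T-KID} with the roles of $\partial^E$ and $\dbar$ interchanged: one starts from $(\partial^E)^*=-*\dbar*$ (the analogue of the identity $\dbar^*=-*\partial^E*$ used there), decomposes $\dbar=\sum_j\bar\sigma_j\,\dbar_j$ with $\bar\sigma_j:=d\bar z^j\wedge(\cdot)$ and $\dbar_j:=\partial/\partial\bar z^j$, applies Proposition~\ref{pr: formula-1} to the degree-one sections $*_s^{-1}\bar\sigma_j*_s$ and applies Theorem~\ref{th:main} together with \eqref{eq:main-1} to the degree-preserving connection with components $\dbar_j$ (the hypothesis $[L,[\dbar_j,L]]=0$ holds automatically, being a wedge of two $(1,1)$-forms), and finally uses $[\dbar,L]=0$ to kill the correction $[\Lambda,[\dbar,L]]$. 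Equivalently, the companion identity follows from the first one by complex conjugation, since $L,\Lambda,*_s$ are real operators and conjugation identifies the $T$-Hodge data for $E$ with that for $\overline E$, interchanging $\partial^E$ with $\dbar$ and sending $i$ to $-i$.

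Finally I would take adjoints and assemble. With respect to the metric of Theorem~\ref{th: T-metric} one has $L^*=\Lambda$ (by Theorem~\ref{th: T-metric} and $\Lambda=*_s^{-1}L*_s$) and $(\dbar^*)^*=\dbar$, so the adjoint of the first identity is $[\Lambda,\dbar]=-i(\partial^E)^*$ and the adjoint of the second is $[\Lambda,\partial^E]=i\dbar^*$. Then, with $D^c=i\dbar-i\partial^E$,
\[
[\Lambda,D^c]=i[\Lambda,\dbar]-i[\Lambda,\partial^E]=i\bigl(-i(\partial^E)^*\bigr)-i\bigl(i\dbar^*\bigr)=(\partial^E)^*+\dbar^*=D^*,
\]
and, substituting $\dbar^*=-i[\Lambda,\partial^E]$ and $(\partial^E)^*=i[\Lambda,\dbar]$,
\[
(D^c)^*=-i\dbar^*+i(\partial^E)^*=-[\Lambda,\partial^E]-[\Lambda,\dbar]=-[\Lambda,D]=[D,\Lambda].
\]

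The main obstacle is the companion identity $[(\partial^E)^*,L]=-i\dbar$: one must either repeat the (moderately involved) computation in the proof of Theorem~\ref{th: T-KID} with $\partial^E$ and $\dbar$ swapped, keeping track of the signs produced by $*=*_s\circ J$ on each bidegree, or else make the complex-conjugation argument precise by checking that conjugation intertwines the Lefschetz star operator, $\Lambda$, $L$, and the connection for $E$ and for $\overline E$. Everything afterwards—the passage to adjoints, using $L^*=\Lambda$, and the cancellation of the correction terms forced by $d\omega=0$—is purely formal.
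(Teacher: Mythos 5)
The paper states Theorem \ref{th:T-kahler} without proof, presenting it as an immediate consequence of Theorem \ref{th: T-KID}, and your argument supplies exactly the intended standard derivation: $d\omega=0$ kills $[\partial^E,L]$ and $[\dbar,L]$, hence the correction term in Theorem \ref{th: T-KID}, the conjugate identity $[(\partial^E)^*,L]=-i\dbar$ follows by the symmetric computation, and passing to adjoints via $L^*=\Lambda$ assembles the two K\"ahler identities. The sign bookkeeping in your final computation is correct, so the proposal matches the paper's (implicit) approach.
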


The above K\"ahler identity gives the following Bochner-Kodaira-Nakano identity in $T$-geometry:

\begin{theorem}\label{th:T-BKN} Assume that both $T$ and $\omega$ are $d$-closed. Then 
$$
\Box_{\dbar}= \Box_{\partial^E} +[i\Theta(E, h_E), \Lambda], 
$$
on $V_T$, where $\Theta(E, h_E):=D^2, \ \Box_{\dbar}:=\dbar^*\dbar + \dbar\dbar^*, \ \Box_{\partial^E}:= \partial^E(\partial^E)^* + (\partial^E)^*\partial^E$; moreover
$$
\Box_{D^c}= \Box_{D}=\Box_{\dbar}+\Box_{\partial^E},
$$
on $V_T$, where $\Box_{D}:=DD^*+D^*D, \ \Box_{D^c}:=D^c(D^c)^*+(D^c)^*D^c$. 
\end{theorem}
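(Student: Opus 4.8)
The plan is to deduce everything formally from the K\"ahler identities of Theorem~\ref{th:T-kahler}, exactly as in the classical Bochner--Kodaira--Nakano argument. First I would record the bidegree-decomposed form of those identities on $V_T$. Since $T$ and $\omega$ are $d$-closed, each of $\partial^E$, $\dbar$, $L$ and $\Lambda=*_s^{-1}L*_s$ maps $V_T$ into $V_T$; moreover $\partial^E$ has bidegree $(1,0)$, $\dbar$ has bidegree $(0,1)$, $L$ has bidegree $(1,1)$, and $\Lambda$ has bidegree $(-1,-1)$ because $*_s$ interchanges $V_T^{p,q}$ and $V_T^{m-q,m-p}$ as in the symplectic case. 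Splitting $D^*=[\Lambda,D^c]$ with $D^c=i\dbar-i\partial^E$ into its pure bidegree $(-1,0)$ and $(0,-1)$ components yields
\begin{equation}
(\partial^E)^*=i[\Lambda,\dbar],\qquad \dbar^*=-i[\Lambda,\partial^E]
\end{equation}
on $V_T$ (expanding $(D^c)^*=[D,\Lambda]$ gives the same two formulas, a useful consistency check). I would also observe that $D^2=\Theta(E,h_E)$ is of bidegree $(1,1)$, so by type considerations $(\partial^E)^2=0$ and $\dbar^2=0$ on $V_T$, whence $\Theta(E,h_E)=\partial^E\dbar+\dbar\partial^E$.

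For the Nakano identity I would substitute the two formulas above into $\Box_{\dbar}=\dbar\dbar^*+\dbar^*\dbar$ and $\Box_{\partial^E}=\partial^E(\partial^E)^*+(\partial^E)^*\partial^E$ and expand. All terms of the shape $\dbar\Lambda\partial^E$ and $\partial^E\Lambda\dbar$ cancel in pairs between the two boxes, and the surviving terms assemble into
\begin{equation}
\Box_{\dbar}-\Box_{\partial^E}=i\big[\partial^E\dbar+\dbar\partial^E,\ \Lambda\big]=[i\Theta(E,h_E),\Lambda],
\end{equation}
which is the first assertion.

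For the remaining identities the key point is that the mixed anticommutators vanish: using $(\partial^E)^*=i[\Lambda,\dbar]$ together with $\dbar^2=0$ one gets $\dbar(\partial^E)^*+(\partial^E)^*\dbar=i(\Lambda\dbar^2-\dbar^2\Lambda)=0$, and taking adjoints (or using $(\partial^E)^2=0$) gives $\partial^E\dbar^*+\dbar^*\partial^E=0$. Then, expanding $\Box_D=DD^*+D^*D$ with $D=\partial^E+\dbar$ and $D^*=(\partial^E)^*+\dbar^*$, the ``diagonal'' contribution is $\Box_{\partial^E}+\Box_{\dbar}$ and the ``off-diagonal'' contribution is precisely the sum of those two vanishing anticommutators, so $\Box_D=\Box_{\dbar}+\Box_{\partial^E}$. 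Expanding $\Box_{D^c}$ with $D^c=i\dbar-i\partial^E$ and $(D^c)^*=-i\dbar^*+i(\partial^E)^*$, the factors of $i$ recombine so that the diagonal part is again $\Box_{\dbar}+\Box_{\partial^E}$, while the off-diagonal part becomes $-(\dbar(\partial^E)^*+(\partial^E)^*\dbar)-(\partial^E\dbar^*+\dbar^*\partial^E)=0$; hence $\Box_{D^c}=\Box_D$.

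The computation is entirely formal, so the hard part is not analysis but bookkeeping: getting the signs from the factors of $i$ right, justifying the bidegree splitting of $D^*=[\Lambda,D^c]$ on $V_T$, and checking at the outset that $\partial^E$, $\dbar$, $D$, $D^c$, $L$, $\Lambda$ and $\Theta(E,h_E)\wedge(\cdot)$ all genuinely preserve $V_T$ --- which is exactly where the hypotheses that $T$ and $\omega$ are $d$-closed are used. This compatibility of the $sl_2$- and Hodge-theoretic machinery with the subspace $V_T$ is the only place where anything beyond classical Hodge theory enters, and it has already been arranged in the preceding subsections.
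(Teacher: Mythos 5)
Your proposal is correct and follows exactly the route the paper intends: the paper states Theorem \ref{th:T-BKN} with no written proof beyond the remark that it follows formally from the K\"ahler identities of Theorem \ref{th:T-kahler}, and your bidegree splitting of $D^*=[\Lambda,D^c]$ into $(\partial^E)^*=i[\Lambda,\dbar]$ and $\dbar^*=-i[\Lambda,\partial^E]$, followed by the classical cancellation argument and the vanishing of the mixed anticommutators, is precisely that standard deduction. The sign and bidegree bookkeeping in your computation checks out, including the observation that $*_s$ sends $V_T^{p,q}$ to $V_T^{m-q,m-p}$ so that $\Lambda$ has bidegree $(-1,-1)$ on $V_T$.
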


\textbf{Remark}: If $1\leq m<n$ then $\Box_{D}$ is elliptic on $V_T^{k,0}$ and $V_T^{0,k}$ for $0\leq k\leq m-1$, but it is not elliptic on $V_T^{m,0}$ and $V_T^{0,m}$. In general, we don't have
$$
(\dbar f)^*(\dbar f) +(\dbar f) (\dbar f)^*= |\dbar f|^2_T,
$$
but still we have the following theorem:

\begin{theorem}\label{th:elliptic} $\Box_{\dbar}, \Box_{\partial^E}$ are elliptic on $V_T^k$ for every $k\neq m$.
\end{theorem}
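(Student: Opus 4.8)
The plan is to reduce ellipticity of $\Box_{\dbar}$ and $\Box_{\partial^E}$ on $V_T^k$ (for $k\neq m$) to the corresponding statement on the full bundle of $E$-valued forms, exploiting the isomorphism $f_T\colon V^{p,q}\to V_T^{p,q}$ and the Bochner--Kodaira--Nakano identity of Theorem \ref{th:T-BKN}. The symbol of a second-order operator is unaffected by lower-order terms, so by Theorem \ref{th:T-BKN} the operators $\Box_{\dbar}$, $\Box_{\partial^E}$ and $\Box_D=\Box_{\dbar}+\Box_{\partial^E}$ all have the same principal symbol up to the zeroth-order curvature term $[i\Theta(E,h_E),\Lambda]$; hence it suffices to prove that $\Box_D$ is elliptic on $V_T^k$ for $k\neq m$. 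Since $D=\dbar+\partial^E$ and $D^*=[\Lambda,D^c]$ (Theorem \ref{th:T-kahler}), $\Box_D$ is a genuine Laplace-type operator, and its symbol at a real covector $\xi$ is a self-adjoint endomorphism of $V_T^k$ which is $\geq 0$; ellipticity at $\xi\neq 0$ means this symbol has trivial kernel.

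First I would compute the principal symbol of $\Box_D$ directly. Writing $\xi=\xi'+\overline{\xi'}$ with $\xi'\in\wedge^{1,0}$, the symbol of $D$ at $\xi$ is (a multiple of) exterior multiplication $e(\xi)\colon u\mapsto \xi\wedge u$, while the symbol of $D^*$ is, by Theorem \ref{th:T-kahler}, $[\Lambda, e(\xi^c)]$ where $\xi^c=i\xi'-i\overline{\xi'}$. On the full form bundle the Kähler identities make $[\Lambda, e(\xi^c)]$ equal (up to a constant) to interior multiplication / contraction against $\xi$, so $\sigma(\Box_D)(\xi)=c|\xi|^2\,\mathrm{Id}$ — the classical computation. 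On $V_T$ the subtlety is that $\Lambda$ is the Lefschetz adjoint of $L=\omega\wedge\cdot$ with respect to the \emph{$L^{m-k}$} isomorphism of Theorem \ref{th: T-HL}, not the full one, so $[\Lambda, e(\xi^c)]$ is not literally a contraction. The key point is to transport the problem through $f_T$: a primitive-decomposition argument (Theorem \ref{th: T-lef}) together with Proposition \ref{pr:lamba-b} lets me express $\sigma(\Box_D)(\xi)$ acting on $L_r u$ with $u$ primitive, and I would show its kernel is nonzero precisely when the "missing" degree $k=m$ top-primitive part can be annihilated — i.e., when $e(\xi)$ and its $\Lambda$-twisted adjoint share a common kernel, which on $V^k$ forces $k=m$. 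Concretely, I expect the kernel of $\sigma(\Box_D)(\xi)$ to correspond under $f_T^{-1}$ to forms $v\in V^k$ with $\xi\wedge v=0$ and $\xi\lrcorner v=0$ simultaneously after the $T$-twist; for $k<m$ or $k>m$ this is impossible by a dimension count ($e(\xi)$ is injective on $\wedge^{<n}$ near its kernel complement and surjective on $\wedge^{>n}$), while at $k=m$ the $T_k$-primitive part exactly obstructs this.

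The main obstacle will be Step handling the $\Lambda$-twisted adjoint on $V_T$: unlike the genuine Hodge adjoint, $D^*=[\Lambda,D^c]$ mixes the Lefschetz grading in a way that the naive "symbol is $|\xi|^2$" computation does not immediately apply. I would address this by writing everything in the primitive decomposition and using that, on a primitive $k$-form $u$, $\Lambda$ acts by the explicit scalars of Proposition \ref{pr:lamba-b} (valid on any Lefschetz bundle, as remarked after the $sl_2$-triple definition for $V_T$); combined with the fact that exterior multiplication by a $(1,0)$ (or $(0,1)$) covector shifts primitivity by at most one, the symbol becomes a finite-dimensional block operator whose positivity and kernel I can analyze degree by degree. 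The case distinction $k\neq m$ enters exactly because for $k=m$ the block corresponding to top-degree primitive forms degenerates — this is the same phenomenon noted in the remark that $\Box_D$ fails to be elliptic on $V_T^{m,0}$ and $V_T^{0,m}$ — whereas summing over $p+q=k$ with $k\neq m$ one always retains a strictly positive block in every bidegree, giving the claimed ellipticity.
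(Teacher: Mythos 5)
Your reduction is sound and matches the paper's opening move: by Theorem \ref{th:T-BKN} the principal symbols of $\Box_{\dbar}$, $\Box_{\partial^E}$ and $\Box_D$ agree up to zeroth-order terms, so ellipticity of all three is equivalent, and the problem becomes showing that the symbol at $\xi\neq 0$ has trivial kernel on $V_T^k$ for $k\neq m$. But from that point on your proposal is a plan rather than a proof, and the one concrete argument you offer does not work. You claim the kernel condition ``$\xi\wedge v=0$ and $\xi\lrcorner v=0$'' is impossible for $k<m$ ``by a dimension count'' because $e(\xi)$ is injective in low degrees. It is not: $e(\xi)$ has kernel $\xi\wedge(\wedge^{k-1})$ in every degree $k\geq 1$. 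The classical vanishing comes from the anticommutation relation $e(\xi)\iota(\xi)+\iota(\xi)e(\xi)=|\xi|^2$, which is exactly what fails here, because the adjoint of $e(\xi)$ in the $T$-twisted metric is \emph{not} ordinary contraction --- this failure is the whole reason ellipticity breaks down at $k=m$. So the decisive step is missing, and the heuristic you substitute for it is false as stated.

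What actually closes the gap in the paper is the following pair of ideas, neither of which appears in your proposal. First, after normalizing $\partial f=dz_1$, $\dbar f=d\bar z_1$ and reducing to $k<m$ via $*$, the four symbol conditions become $dz_1\wedge u=d\bar z_1\wedge u=0$ together with $dz_1\wedge *u=d\bar z_1\wedge *u=0$; an induction over the length of the Lefschetz decomposition $u=\sum\omega_r\wedge u^r$ (using the explicit formula $*u=c\,i^{p-q}\sum(-1)^r\omega_{m-k+r}\wedge u^r$ and the mixed hard Lefschetz theorem) shows that \emph{each primitive component separately} satisfies $u^r\wedge dz_1=u^r\wedge d\bar z_1=0$. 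Second, dividing by $T$ (legitimate by Timorin's injectivity since $\deg u^r\leq m-1$), each $f_T^{-1}(u^r)$ is divisible by $dz_1\wedge d\bar z_1$, hence pairs to zero with its conjugate; Timorin's mixed Hodge--Riemann bilinear relation then forces $f_T^{-1}(u^r)=0$. Your instinct to use the primitive decomposition and Proposition \ref{pr:lamba-b} points in the right direction, but without the componentwise induction and without invoking the Hodge--Riemann positivity to kill each primitive piece, the argument does not reach a conclusion --- and note that the restriction $k\leq m-1$ enters precisely in the divisibility step, which is where the case $k=m$ genuinely escapes.
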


\begin{proof} Recall that a differential operator of order $l$ is said to be elliptic if $\sigma_l (D)(x, \xi)$ is invertible for every $x\in M$ and every non-zero $\xi \in T_x M$, where
$$
\sigma_l (D)(x, \xi)u:=\lim_{t\to \infty} t^{-l} e^{-itf} D (e^{itf}u) (x),
$$ 
and $f$ is a smooth function near $x$ such that $df(x)=\xi$. We have
$$
\sigma_2(\Box_{\dbar})(x, \xi)= (\dbar f)^*(\dbar f) +(\dbar f) (\dbar f)^*, \ \sigma_2(\Box_{\partial^E})(x, \xi)= (\partial f)^*(\partial f) +(\partial f) (\partial f)^*.
$$
Theorem \ref{th:T-BKN} gives 
$$
\sigma_2(\Box_{\dbar})(x, \xi)=\sigma_2(\Box_{\partial^E})(x, \xi).
$$
Thus it suffices to prove that if $u\in V_T^k$, $k\neq m$, satisfies 
$$
(\dbar f)\wedge u= (\partial f) \wedge u= (\dbar f)^* u =(\partial f)^* u=0,
$$
then $u(x)=0$. It is clear that we can assume that $u\in V_T^{p,q}$, $p+q=k$. Consider $*u$ if $k>m$, one may assume further that $k<m$. Moreover, by a $\C$-linear change of local coordinate, one may assume that $\dbar f=d\bar z_1, \ \partial f=dz_1$. Let
$$
u=\sum_{r=0}^{j} \omega_r \wedge u^r,
$$
be the Lefschetz decomposition of $u$. Then
$$
*u=(-1)^{k+\cdots+1} i^{p-q} \sum_{r=0}^{j} (-1)^r \omega_{m-k+r} \wedge u^r.
$$
The lemma below gives 
$$
u^r\wedge dz_1=u^r\wedge d\bar z_1=0, \ \forall \ 0\leq r\leq j,
$$
hence
$$
f_T^{-1}(u^r) \wedge dz_1=f_T^{-1}(u^r) \wedge d\bar z_1=0,
$$
since the degree of each $u^r$ is no bigger than $m-1$. Thus each $f_T^{-1}(u^r)$ can be written as
$$
dz_1\wedge d\bar z_1 \wedge v^r.
$$
Timorin's Hodge-Riemann bilinear relation implies that if $f_T^{-1}(u^r)  \neq 0$ then $f_T^{-1}(u^r) \wedge \overline{f_T^{-1}(u^r)} \neq 0$. But obviously $(dz_1\wedge d\bar z_1)^2=0$ gives $f_T^{-1}(u^r) \wedge \overline{f_T^{-1}(u^r)} = 0$. Thus $f_T^{-1}(u^r)  = 0$ and $u=0$. The proof is complete.
\end{proof}

\begin{lemma} $u^r\wedge dz_1=u^r\wedge d\bar z_1=0, \ \forall \ 0\leq r\leq j$.
\end{lemma}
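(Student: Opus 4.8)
The plan is to show that each primitive component $u^r$ in the Lefschetz decomposition $u=\sum_{r=0}^{j}\omega_r\wedge u^r$ inherits the vanishing properties of $u$, the tool being \emph{uniqueness} in the Lefschetz decomposition (Theorem \ref{th: T-lef}). In the situation of the proof of Theorem \ref{th:elliptic} we have $u\in V_T^{p,q}$ with $k=p+q<m$, and $u$ satisfies four conditions: the wedge conditions $dz_1\wedge u=d\bar z_1\wedge u=0$, and—translating $(\partial f)^*u=(\dbar f)^*u=0$ through the definition of the $T$-metric and the relation between $*=*_s\circ J$ and wedging, exactly the way $\dbar^*=-*\partial^E*$ is used on $V_T$ in the proof of Theorem \ref{th: T-KID}—the dual conditions $d\bar z_1\wedge *u=dz_1\wedge *u=0$. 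Since, as computed in the proof of Theorem \ref{th:elliptic}, $*u=(-1)^{k+\cdots+1}i^{p-q}\sum_r(-1)^r\,\omega_{m-k+r}\wedge u^r$, all four conditions become statements about the single family $\{u^r\}$.

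Write $\sigma:=dz_1\wedge\,$; the same argument will apply verbatim to $d\bar z_1\wedge\,$. Then $\sigma$ preserves $V_T$, has degree one, and commutes with $L$ since $\omega$ is a $2$-form. As $u^r$ is primitive of degree $k-2r$ we have $\omega^{m-k+2r+1}\wedge u^r=0$, hence $\omega^{m-k+2r+1}\wedge\sigma u^r=0$, so the Lefschetz decomposition of $\sigma u^r$ has at most two terms:
$$\sigma u^r=a^r+\omega\wedge b^r,$$
with $a^r,b^r$ primitive of degrees $k-2r+1$ and $k-2r-1$ respectively (set $b^r=0$ out of range). Expanding $0=\sigma u=\sum_r\omega_r\wedge\sigma u^r$ and using $\omega_r\wedge\omega=(r+1)\,\omega_{r+1}$, I would rewrite this as $\sum_r\omega_r\wedge\bigl(a^r+r\,b^{r-1}\bigr)=0$. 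Since $a^r$ and $b^{r-1}$ are both primitive of degree $k-2r+1=(k+1)-2r$, this is the Lefschetz decomposition of the zero form in degree $k+1\le m$, so uniqueness gives $a^r=-r\,b^{r-1}$ for every $r$ (in particular $a^0=0$).

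Next I would feed this into the second condition: $0=\sigma(*u)=(-1)^{k+\cdots+1}i^{p-q}\sum_r(-1)^r\,\omega_{m-k+r}\wedge(a^r+\omega\wedge b^r)$. Substituting $a^r=-r\,b^{r-1}$, using again $\omega_p\wedge\omega=(p+1)\omega_{p+1}$, reindexing the two resulting sums to a common variable and collecting, one obtains
$$\sum_s(-1)^s\,(m-k+2s+2)\,\omega_{m-k+s+1}\wedge b^s=0 .$$
Here every term has total degree $2(m-k+s+1)+(k-2s-1)=2m-k+1$, and the powers $m-k+s+1$ (distinct for distinct $s$, and $\ge m-k+1\ge\tfrac{m-k+1}{2}$) together with the primitive degrees $k-2s-1=(2m-k+1)-2(m-k+s+1)$ of the $b^s$ are precisely the data of a Lefschetz decomposition in degree $2m-k+1$. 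Hence Theorem \ref{th: T-lef} applies and, since $m-k+2s+2\ge 2>0$, forces $b^s=0$ for all $s$; then $a^r=-r\,b^{r-1}=0$ as well, so $\sigma u^r=a^r+\omega\wedge b^r=0$, i.e. $dz_1\wedge u^r=0$. Running the identical argument with $d\bar z_1\wedge\,$ in place of $dz_1\wedge\,$, now using $d\bar z_1\wedge u=0$ and $d\bar z_1\wedge *u=0$, yields $d\bar z_1\wedge u^r=0$.

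The step I expect to be the main obstacle is the last one: one must check carefully that $\sum_s(-1)^s(m-k+2s+2)\,\omega_{m-k+s+1}\wedge b^s=0$ is genuinely in Lefschetz-decomposition form—correct range of powers for a degree-$(2m-k+1)$ form and matching primitive degrees—so that uniqueness can legitimately be invoked; this is the one place where both $k<m$ and the precise shape of the formula for $*u$ are used. By comparison, translating the adjoint conditions $(\partial f)^*u=(\dbar f)^*u=0$ into the wedge conditions $d\bar z_1\wedge *u=dz_1\wedge *u=0$ is routine given the identity $\dbar^*=-*\partial^E*$ already established on $V_T$, but it should be stated explicitly, and one should also note that we may assume $u$ of pure type $(p,q)$ with $k<m$ as in the proof of Theorem \ref{th:elliptic}.
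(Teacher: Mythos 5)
Your proof is correct, but it follows a genuinely different route from the paper's. The paper works directly with the two identities $A:=\sum_r\omega_r\wedge(dz_1\wedge u^r)=0$ and $B:=\sum_r(-1)^r\omega_{m-k+r}\wedge(dz_1\wedge u^r)=0$, forms the combination $\omega_{m-k}\wedge A+(-1)^{l+1}C^{l}_{m-k+l}B$ to cancel the $r=l$ term, wedges with $\omega_l$ so that primitivity kills all terms with $r\leq l-1$, isolates $dz_1\wedge u^{l+1}$, and concludes by Timorin's hard Lefschetz theorem plus a downward induction on the number of terms $j$. You instead decompose each $\sigma u^r=a^r+L b^r$ into primitive pieces (legitimate, by the same degree count the paper uses for $\theta u$ in Theorem \ref{th:main} and for $\sigma u$ in Proposition \ref{pr: formula-1}), and then invoke \emph{uniqueness} of the Lefschetz decomposition twice: once in degree $k+1\leq m$ to extract $a^r=-r\,b^{r-1}$ from $\sigma u=0$, and once in degree $2m-k+1$ to force $(m-k+2s+2)\,b^s=0$ from $\sigma(\ast u)=0$. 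I checked the reindexing and the coefficient $m-k+2s+2$; they are right, the terms $\omega_{m-k+s+1}\wedge b^s$ do constitute a legitimate Lefschetz decomposition in degree $2m-k+1$ (the powers are distinct and $\geq m-k+1$, and the primitive degrees $k-2s-1$ match), and the coefficient is strictly positive, so the argument closes. What your approach buys is a cleaner, non-inductive, ``solve the linear system'' structure that is closer in spirit to the computations in Steps 1--2 of Theorem \ref{th:main}, and it yields the sharper intermediate information $b^s=0$, $a^r=0$ rather than only the vanishing of $dz_1\wedge u^r$; what the paper's version buys is that it never needs the primitive decomposition of $dz_1\wedge u^r$, only injectivity of multiplication by high powers of $\omega$. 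Your preliminary remarks (reduction to pure type $(p,q)$ with $k<m$, and the translation of $(\partial f)^*u=(\dbar f)^*u=0$ into $d\bar z_1\wedge\ast u=dz_1\wedge\ast u=0$ via $\dbar^*=-\ast\partial^E\ast$) coincide with what the paper assumes implicitly, and you are right that they deserve to be stated.
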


\begin{proof} $dz_1 \wedge u=dz_1\wedge *u=0$ gives
$$
A:=\sum_{r=0}^{j} \omega_r \wedge (dz_1\wedge u^r) =0, \ B:= \sum_{r=0}^{j} (-1)^r \omega_{m-k+r} \wedge (dz_1\wedge u^r)=0. 
$$
The lemma is true if $j=0$. Assume the lemma is true for $0\leq j\leq l$. We claim that it is true for $j=l+1$. In fact,  $\omega_{m-k} \wedge A + (-1)^{l+1}C_{m-k+l}^l B=0$ gives
$$
C:=(\omega_{m-k} \wedge \omega_{l+1}+ C_{m-k+l}^l \omega_{m-k+l+1})\wedge (dz_1\wedge u^{l+1})+
\sum_{r=0}^{l-1} C_r \omega_{m-k+r} \wedge (dz_1\wedge u^r)=0. 
$$
Since each $u^r$ is primitive, we know that 
$$
\omega_l\wedge C =\omega_r \wedge (\omega_{m-k} \wedge \omega_{l+1}+ C_{m-k+l}^l \omega_{m-k+l+1})\wedge (dz_1\wedge u^{l+1}),
$$
which gives $dz_1\wedge u^{l+1}=0$ by Timorin's hard Lefschetz theorem. Thus the lemma is also true for $j=l+1$ by the induction hypothesis. 
\end{proof}

\section{Applications}

\subsection{Alexandrov-Fenchel inequality} 

\begin{definition} A Hermitian manifold $(X, \hat \omega)$ is said to be complete if there exists a smooth function, say 
$$
\phi: X\to [0, \infty),
$$
such that $\phi^{-1}([0, c])$ is compact for every $c>0$ and
$$
|d\phi|_{\hat \omega} (x) \leq 1, \ \forall \ x\in X.
$$
\end{definition}

\begin{theorem}\label{th:T-AF} Let $(X, \hat \omega)$ be an $n$-dimensional complete K\"ahler manifold with finite volume. Let $\alpha_{1}, \cdots, \alpha_n$ be smooth $d$-closed semi-positive $(1,1)$-forms  such that $\alpha_{j}\leq \hat \omega$ on $X$ for every $1\leq j\leq n$. Assume that $n\geq 2$. Put
$$
T:=\alpha_{3} \wedge \cdots \wedge \alpha_n,\ T:=1, \ \text{if}\ n=2.
$$
Then
$$
\left(\int_X \alpha_{1}\wedge \alpha_2\wedge T \right)^2 \geq \left(\int_{X} \alpha_{1}^2\wedge T\right)\left(\int_X \alpha_{2}^2\wedge T\right).
$$
\end{theorem}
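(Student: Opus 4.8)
The plan is to derive Theorem~\ref{th:T-AF} from a mixed Hodge--index inequality on $(X,\hat\omega)$, combining Timorin's linear algebra (Section~4.1) with the $T$-Hodge theory of Section~4. First I would reduce to the strictly positive, nondegenerate case and to a reverse Cauchy--Schwarz assertion. Replacing $\hat\omega$ by $(1+\varepsilon)\hat\omega$ (still complete K\"ahler of finite volume) and each $\alpha_j$ by $\alpha_j+\varepsilon\hat\omega$ (still $d$-closed, now strictly positive, and bounded by $(1+\varepsilon)\hat\omega$), and letting $\varepsilon\downarrow0$ at the end --- all integrals converge by dominated convergence, since every form occurring is dominated by a fixed multiple of $\hat\omega^n$ on the finite-volume manifold $X$ --- we may assume $\alpha_1,\dots,\alpha_n$ strictly positive. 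For bounded $d$-closed real $(1,1)$-forms $a,b$ put $Q(a,b):=\int_X a\wedge b\wedge T$, $T=\alpha_3\wedge\cdots\wedge\alpha_n$; this is finite, and depends only on the de~Rham classes of $a,b$ by Stokes' theorem on $X$ (a point belonging to the technical heart below). Since $T\ge0$ we have $Q(\alpha_i,\alpha_i)\ge0$, and the inequality is trivial when $Q(\alpha_2,\alpha_2)=0$; otherwise put $\lambda=Q(\alpha_1,\alpha_2)/Q(\alpha_2,\alpha_2)$ and $\beta=\alpha_1-\lambda\alpha_2$, so that $Q(\beta,\alpha_2)=0$ and $Q(\beta,\beta)=Q(\alpha_1,\alpha_1)-Q(\alpha_1,\alpha_2)^2/Q(\alpha_2,\alpha_2)$, whence the theorem is equivalent to $Q(\beta,\beta)\le0$.

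The linear-algebra input is: applying Timorin's mixed Hodge--Riemann bilinear relation pointwise to the strictly positive forms $\alpha_2,\alpha_3,\dots,\alpha_n$ (used as $n-1$ of Timorin's forms, with $T_2=\alpha_2\wedge T$ and $T_3=T$), any real $(1,1)$-form $\gamma$ on $X$ with $\gamma\wedge\alpha_2\wedge T\equiv0$ satisfies $\gamma\wedge\gamma\wedge T\le0$ pointwise, strictly where $\gamma\ne0$. Thus $Q(\beta,\beta)\le0$ would follow immediately if $[\beta]$ had such an ``$\alpha_2$-primitive'' representative; the role of the Hodge theory is to bridge this gap.

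The Hodge-theoretic heart is an $L^2$ version of Timorin's deformation argument --- the proof of Timorin's MHR given in the excerpt --- carried out on the reduced $L^2$-cohomology of $(X,\hat\omega)$. Set $\alpha_j^{t}=(1-t)\alpha_j+t\hat\omega$, $T^t=\alpha_3^t\wedge\cdots\wedge\alpha_n^t$, $Q^t(a,b)=\int_X a\wedge b\wedge T^t$, and let $\beta^t$ be built from the $\alpha_i^t$ as $\beta$ was; then $t\mapsto Q^t(\beta^t,\beta^t)$ is continuous, with $Q^1(\beta^1,\beta^1)=0$ because at $t=1$ all forms equal $\hat\omega$ and $\beta^1=0$. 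For each $t$, the $T^t$-Hodge theory of Section~4 --- the mixed hard Lefschetz isomorphism and Lefschetz decomposition (Theorems~\ref{th: T-HL}, \ref{th: T-lef}), the positive-definite $T^t$-metric (Theorem~\ref{th: T-metric}), and the Bochner--Kodaira--Nakano identity and ellipticity away from degree $m$ (Theorems~\ref{th:T-BKN}, \ref{th:elliptic}) --- is precisely what should promote these a~priori compactly supported statements to an $L^2$-Hodge decomposition and a mixed hard Lefschetz isomorphism on the complete $X$; then $Q^t$ is nondegenerate modulo its kernel on the relevant degree of $L^2$-cohomology, its signature there is locally constant in $t$, and at $t=1$ it is the ordinary Hodge--index signature of $(X,\hat\omega)$ --- a single positive direction, by the pointwise Hodge--Riemann relation and $\int_X\hat\omega^n>0$. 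Hence no $Q^t$ admits a positive $2$-plane; since $Q^t(\alpha_2^t,\alpha_2^t)>0$ and $Q^t(\beta^t,\alpha_2^t)=0$, this forces $Q^t(\beta^t,\beta^t)\le0$, and $t=0$ gives $Q(\beta,\beta)\le0$.

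I expect the only real difficulty to lie in the non-compact analysis of the heart. One must (i)~construct the reduced $L^2$-cohomology of the twisted complexes $V_{T^t}$ over $X$ and prove the mixed hard Lefschetz isomorphism there, using completeness, finite volume, the bounds $\alpha_j\le\hat\omega$, and the $\Box$-estimates flowing from Theorems~\ref{th:T-BKN}--\ref{th:elliptic}, and in particular cope with the failure of ellipticity in the middle degree $k=m=2$, which is exactly where the $(1,1)$-forms live; (ii)~justify Stokes' theorem, hence the well-definedness and $t$-continuity of $Q^t$ on cohomology, for the merely bounded ($L^2$, not compactly supported) forms in play; and (iii)~replace the finite-dimensional ``signature is locally constant, hence $(1,\ast)$'' step by an argument valid for infinite-dimensional $L^2$-cohomology, for which it suffices to work with the $2$-plane spanned by $[\alpha_1^t]$ and $[\alpha_2^t]$ and with nondegeneracy of $Q^t$ on a fixed finite-dimensional subspace containing it. The middle-degree subtlety in (i) is, I expect, the crux.
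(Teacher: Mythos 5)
Your strategy is genuinely different from the paper's, and it contains a gap that I believe is fatal rather than merely technical. The foundational step of your argument is that $Q(a,b)=\int_X a\wedge b\wedge T$ descends to (some) cohomology of $X$, so that a signature/Hodge-index argument on that cohomology can be run. On a non-compact complete finite-volume manifold this fails, and it fails precisely in the situation the theorem is designed for: in the application to the classical Alexandrov--Fenchel inequality one takes $X=\R^n\times(\R/\Z)^n$ and $\alpha_j=dd^c\phi_j$ with $\phi_j$ a globally defined (convex) potential, so every $\alpha_j$ is globally exact and all de~Rham classes $[\alpha_j]$ vanish --- yet $Q(\alpha_i,\alpha_j)$ is a positive mixed volume. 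Hence Gaffney/Stokes cannot apply to the relevant primitives (indeed $d^c\phi_i\wedge\alpha_j\wedge T$ is not integrable), the pairing does not factor through de~Rham or reduced $L^2$-cohomology, and the entire content of the inequality sits in the behaviour at infinity that your cohomological reduction discards. Even setting this aside, the ``signature is locally constant and equals $(1,\ast)$'' step needs a full mixed Hodge--Riemann package (hard Lefschetz, primitive decomposition, nondegeneracy) on the reduced $L^2$-cohomology of a complete finite-volume manifold; none of this is established by Theorems \ref{th: T-HL}--\ref{th:elliptic}, which are pointwise/compactly-supported statements, and your item (iii) does not circumvent it: to conclude that a $Q^t$-orthogonal complement of a positive vector is negative semi-definite on a given $2$-plane you must know the \emph{global} maximal dimension of a positive subspace, not just nondegeneracy on a finite-dimensional piece containing the plane.

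For comparison, the paper avoids cohomology entirely. After the same reduction to strictly positive forms, Step 1 reduces the inequality to convexity of $\psi(t)=-\log\int_X\omega_2\wedge T$ along $\omega=t\alpha_1+(1-t)\alpha_2$ (log-concavity of a quadratic polynomial in $t$ is equivalent to the stated discriminant inequality). Convexity is then proved by a Berndtsson-type curvature computation: the constants form a holomorphic line subbundle of the trivial bundle of $L^2$ functions over the strip $U$, with metric $h(f,g)=\int_X f\bar g\,\omega_2\wedge T$; Theorem \ref{th:main} identifies the ambient curvature as $[\theta^*,\theta]$ with $\theta=\alpha_1-\alpha_2$, the subbundle curvature formula expresses $\psi_{tt}$ as $\|\theta\|^2-\|(\Lambda\theta)_\perp\|^2$ where $(\Lambda\theta)_\perp$ is the minimal $L^2$ solution of $d(\cdot)=d(\Lambda\theta)=(d^c)^*\theta$ (Theorem \ref{th:T-kahler}), and H\"ormander's a priori estimate together with Theorem \ref{th:T-BKN} bounds $\|(\Lambda\theta)_\perp\|^2$ by $\|\theta\|^2$. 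Completeness and the bounds $\alpha_j\le\hat\omega$ enter only through the $L^2$ existence theorem, which is exactly the tool that survives non-compactness where the Hodge-index machinery does not. If you want to pursue your route, you would in effect have to prove an $L^2$ Hodge index theorem with control of boundary terms at infinity --- a substantially harder problem than the theorem itself.
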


\textbf{Remark}: In case $(X, \hat \omega)$ is compact K\"ahler, the above theorem is just the Khovanskii-Teissier inequality. The classical Alexandrov-Fenchel inequality follows from the above theorem with $X=\mathbb R^n\times(\mathbb R/\mathbb Z)^n$, see \cite{Wang-AF} for the proof.

\begin{proof}[Proof of Theorem \ref{th:T-AF}] We shall follow the method in \cite{Wang-AF}. Consider $\alpha_j+\epsilon \hat \omega $ instead of $\omega$, one may assume that 
\begin{equation}\label{eq:equ-norm}
\frac{\hat \omega}{C} \leq \alpha_j \leq C \hat \omega,  
\end{equation}
for every $1\leq j\leq n$, where $C$ is a fixed positive constant.

\medskip

\emph{Step 1}: By Proposition 2.6 in \cite{Wang-AF}, it suffices to show that
$$
\psi: t\mapsto -\log\int_{X} \omega_2\wedge T, \  \omega:=t\alpha_1 +(1-t) \alpha_2,
$$
is convex on $(0,1)$.

\medskip

\emph{Step 2}: Consider the trivial line bundle $\ker d:=U\times \C$ over 
$$
U:=\{t+is: 0<t<1, \ s\in \mathbb R\},
$$
with metric
$$
h(1,1)(t+is):=e^{-\psi(t)}=\int_{X} \omega_2\wedge T, 
$$
Then $\psi$ is convex iff the curvature of $(\ker d, h)$ is positive.

\medskip

\emph{Step 3}: Look at $\ker d$ as a holomorphic subbundle of 
$$
\mathcal A:= U\times A,
$$
where
$$
A:=\{f\in C^{\infty}(X, \C): \int_X |f|^2 \hat \omega_n <\infty\}.
$$
$h$ extends to a metric on $\mathcal A$ as follows:
$$
h(f,g)(t+is):=\int_{X} f\bar g ~ \omega_2\wedge T=\int_{X} \{f, * g\}, \ \forall \ f,g\in A.
$$
Thus the Chern curvature operator of $(\mathcal A, h)$ can be written as
$$
\Theta^{\mathcal A}_{tt}:=[*^{-1} (\partial/\partial t) *, \partial/\partial t].
$$
Our main theorem gives 
$$
*^{-1} (\partial/\partial t) *= \partial/\partial t +[\Lambda, \theta], \ \theta:=[\partial/\partial t, \omega]=\alpha_1-\alpha_2,
$$
thus we have
$$
\Theta^{\mathcal A}_{tt}=[[\Lambda, \theta], \partial/\partial t ]=[\theta^*, \theta],
$$
by \eqref{eq:main-1}. 

\medskip

\emph{Step 4}: Denote by $\Theta^{\mathcal K}_{tt}$ the Chern curvature operator of $(\ker d, h)$. By the subbundle curvature formula, we have
$$
\psi_{tt}= \frac{h(\Theta^{\mathcal K}_{tt} 1, 1)}{h(1,1)}= \frac{h(\Theta^{\mathcal A}_{tt} 1, 1)}{h(1,1)}- \frac{h((\Lambda\theta)_{\bot}, (\Lambda\theta)_{\bot})}{h(1,1)},
$$
where $\Lambda\theta=[\Lambda, \theta] 1$ and
$$
(\Lambda\theta)_{\bot}:= (\Lambda\theta)-\frac{h(\Lambda\theta, 1)}{h(1,1)}.
$$
is the $L^2$-minimal solution of 
$$
d(\cdot)= d(\Lambda\theta).
$$

\medskip

\emph{Step 5}: Theorem \ref{th:T-kahler} gives
$$
d(\Lambda\theta)=(d^c)^* \theta.
$$
If $u$ is a smooth one-form with compact support on $X$ then
$$
|((d^c)^* \theta, u)|^2=|(\theta, d^c u)|^2 \leq ||\theta||^2 ||d^c u||^2,
$$
moreover, theorem \ref{th:T-BKN} gives
$$
||d^c u||^2 \leq ||du||^2+||d^* u||^2,
$$
thus H\"ormander's $L^2$-theory (here we use \eqref{eq:equ-norm} and the completeness of $(X, \hat \omega)$, see the proof of Lemma 5.2 in \cite{Wang-AF} for the details) implies
$$
h((\Lambda\theta)_{\bot}, (\Lambda\theta)_{\bot})=||(\Lambda\theta)_{\bot}||^2\leq ||\theta||^2=h(\Theta^{\mathcal A}_{tt} 1, 1),
$$
where the last identity follows from $\Theta^{\mathcal A}_{tt}=[\theta^*, \theta]$. Thus $\psi_{tt}\geq 0$.  
\end{proof}

\subsection{Dinh-Nguy\^en's theorem}

Let $(X, \omega)$ be an $n$-dimensional compact K\"ahler manifold. Let $\alpha_1, \cdots, \alpha_n$ be smooth K\"ahler forms on $X$. Let $\mathcal A^{p,q}$ be the space of smooth $(p,q)$-forms on $X$ and $\mathcal A^k$ be the space of real-valued smooth $k$-forms on $X$. We have the Dolbeault cohomology group (a $\C$-vector space in fact)
$$
H^{p,q}(X, \C):= \frac{\mathcal A^{p,q} \cap \ker \dbar}{\dbar \mathcal A^{p,q-1}},
$$
and the de Rham cohomology group (an $\mathbb R$-vector space)
$$
H^k(X, \mathbb R):= \frac{\mathcal A^{k} \cap \ker d}{d \mathcal A^{k-1}}.
$$
The following theorem depends on the theory of elliptic operators, see \cite{Guillemin-notes-e}, and Theorem \ref{th:T-BKN} (when $(E, h_E)$ is trivial and $T=1$).

\begin{theorem}[Hodge-Dolbeault-de Rham theorem] Let $(X, \omega)$ be an $n$-dimensional compact K\"ahler manifold. Let $\Box_{\dbar}$ be the $\dbar$-Laplacian with respect to the $(\omega, J)$-metric. Then each $H^{p,q}(X, \C)$ is $\C$-linear isomorphic to 
$$
\mathcal H^{p,q}:= \mathcal A^{p,q} \cap\ker \Box_{\dbar}
$$ 
which is finite dimensional. 
Let $\Box_{d}$ be the $d$-Laplacian with respect to the $(\omega, J)$-metric. Then each $H^k(X, \mathbb R)$ is $\mathbb R$-linear isomorphic to 
$$
 \mathcal H^{k}:=\mathcal A^{k} \cap\ker \Box_{d}.
$$
Moreover,
$$
 \mathcal H^{k} + i  \mathcal H^{k} =\oplus_{p+q=k} \mathcal H^{p,q}.
$$
\end{theorem}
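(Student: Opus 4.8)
The plan is to deduce the entire statement from the abstract Hodge decomposition for a formally self-adjoint elliptic operator on a compact manifold, applied first to $\Box_{\dbar}$ on $\mathcal A^{p,q}$ and to $\Box_d$ on $\mathcal A^k$, and then to compare the two Laplacians through the K\"ahler--Bochner--Kodaira--Nakano identity of Theorem \ref{th:T-BKN} specialized to $(E,h_E)$ trivial and $T=1$. First I would record ellipticity: with $T=1$ one has $m=n$, and the symbol computation in the proof of Theorem \ref{th:elliptic} gives $\sigma_2(\Box_{\dbar})(x,\xi)=(\dbar f)^*(\dbar f)+(\dbar f)(\dbar f)^*=|\xi^{0,1}|^2\,\mathrm{id}$, a scalar operator that is invertible for every $\xi\neq 0$ in every bidegree; likewise $\Box_d$ is elliptic in every degree. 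Invoking the theory of elliptic self-adjoint operators on the compact $X$ (see \cite{Guillemin-notes-e}), each of $\mathcal H^{p,q}=\mathcal A^{p,q}\cap\ker\Box_{\dbar}$ and $\mathcal H^{k}=\mathcal A^{k}\cap\ker\Box_d$ is finite dimensional, and one obtains the $L^2$-orthogonal splittings $\mathcal A^{p,q}=\mathcal H^{p,q}\oplus\Box_{\dbar}(\mathcal A^{p,q})$ and $\mathcal A^{k}=\mathcal H^{k}\oplus\Box_d(\mathcal A^{k})$ with respect to the $(\omega,J)$-metric.

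Next I would turn these splittings into the cohomology isomorphisms by the standard bookkeeping. Since $\dbar^2=0$ and $\dbar^*$ is the genuine formal adjoint on the compact $X$, one checks that $\Box_{\dbar}$-harmonicity is equivalent to being simultaneously $\dbar$- and $\dbar^*$-closed, that $\dbar(\mathcal A^{p,q-1})\perp\dbar^*(\mathcal A^{p,q+1})$, and that $\Box_{\dbar}(\mathcal A^{p,q})=\dbar(\mathcal A^{p,q-1})\oplus\dbar^*(\mathcal A^{p,q+1})$; hence $\mathcal A^{p,q}\cap\ker\dbar=\mathcal H^{p,q}\oplus\dbar(\mathcal A^{p,q-1})$, so the map sending a harmonic form to its Dolbeault class is an isomorphism $\mathcal H^{p,q}\cong H^{p,q}(X,\C)$ --- injectivity from $\|\dbar\beta\|^2=(\dbar^*\dbar\beta,\beta)=0$ for a harmonic $\dbar\beta$, surjectivity from the displayed splitting. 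The identical argument with $(\dbar,\dbar^*,\Box_{\dbar})$ replaced by $(d,d^*,\Box_d)$ on real forms yields $\mathcal H^{k}\cong H^{k}(X,\mathbb R)$, which is finite dimensional as well.

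For the last identity I would use Theorem \ref{th:T-BKN} with $(E,h_E)$ trivial and $T=1$ (so $\omega$ K\"ahler and $T$ both $d$-closed, and $\Box_D=\Box_d$): since $\Theta(E,h_E)=0$ it gives $\Box_{\dbar}=\Box_{\partial}$ and therefore $\Box_d=\Box_{\dbar}+\Box_{\partial}=2\Box_{\dbar}$ on complex-valued forms. Because $\dbar$ (resp. $\dbar^*$) has bidegree $(0,1)$ (resp. $(0,-1)$), $\Box_{\dbar}$ preserves each $\mathcal A^{p,q}$, and because $\Box_d$ is a real operator (it commutes with complex conjugation, so its complexified kernel is the complexification of its real kernel) we get
$$
\mathcal H^{k}+i\,\mathcal H^{k}=(\mathcal A^{k}\otimes\C)\cap\ker\Box_d=(\mathcal A^{k}\otimes\C)\cap\ker\Box_{\dbar}=\bigoplus_{p+q=k}\bigl(\mathcal A^{p,q}\cap\ker\Box_{\dbar}\bigr)=\bigoplus_{p+q=k}\mathcal H^{p,q}.
$$

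The hard part is essentially imported rather than produced here: the abstract elliptic package of the first paragraph --- finiteness of $\ker\Box$, closedness of $\operatorname{im}\Box$, and the orthogonal decomposition $\mathcal A=\ker\Box\oplus\operatorname{im}\Box$ --- which I take wholesale from \cite{Guillemin-notes-e}. Beyond that, the one point that genuinely uses the machinery of these notes is the passage to bidegrees in the final identity: it rests on the scalar relation $\Box_d=2\Box_{\dbar}$, i.e. on the full content of Theorem \ref{th:T-BKN} (the equality $\Box_D=\Box_{\dbar}+\Box_{\partial^E}$ together with $\Box_{\dbar}=\Box_{\partial^E}$ when the curvature vanishes), which is precisely what lets harmonicity be tested one bidegree at a time.
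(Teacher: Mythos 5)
Your proof is correct and follows exactly the route the paper indicates (the paper gives no written proof, only the remark that the theorem ``depends on the theory of elliptic operators, see \cite{Guillemin-notes-e}, and Theorem \ref{th:T-BKN} when $(E,h_E)$ is trivial and $T=1$''): ellipticity plus the abstract harmonic decomposition for the two isomorphisms, and $\Box_d=2\Box_{\dbar}$ from the Bochner--Kodaira--Nakano identity together with the reality of $\Box_d$ and the bidegree-preservation of $\Box_{\dbar}$ for the final splitting. No gaps.
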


The above theorem implies that every class in $H^{p,q}(X, \C)$ has a $d$-closed representative. Fix $0 \leq m\leq n$, put
$$
T:=\alpha_{m+1} \wedge \cdots \wedge\alpha_n, \ T:=1 \ \text{if} \ m=n.
$$ 

\begin{definition} We call a class $[u]$ in  $H^{p,q}(X, \C)$, $p+q=m$, a $T$-primitive class if $[u\wedge T \wedge \omega]=0$ in  $H^{p+n-m+1,q+n-m+1}(X, \C)$.
\end{definition}

\begin{theorem}[Dinh-Nguy\^en's theorem \cite{DN06}] Assume that $[u]\in H^{p,q}(X, \C)$, $p+q=m$, is a non-zero $T$-primitive class then 
$$
\int_X (-1)^{m+\cdots+1} \mathbf u\wedge \overline{J\mathbf u} \wedge T > 0,
$$
where $\mathbf u$ is a $d$-closed representative of $[u]$, $J\mathbf u=i^{p-q} \mathbf u$. 
\end{theorem}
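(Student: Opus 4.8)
The plan is to deduce this cohomological statement from Timorin's \emph{pointwise} mixed Hodge--Riemann relation, by producing a representative of $[u]$ that is pointwise primitive, and then to reconcile it with the harmonic representative implicit in the statement through a deformation argument.

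I would first note that $Q([u]):=\int_X(-1)^{m+\cdots+1}\mathbf u\wedge\overline{J\mathbf u}\wedge T$ depends only on the class: two $d$-closed $(p,q)$-representatives differ by a $\partial\dbar$-exact form (the $\partial\dbar$-lemma on the compact K\"ahler $X$), and, since $dT=0$ and $T$ has pure type, an integration by parts together with a count of bidegrees shows $Q$ is unchanged. Suppose next that we had a $d$-closed $(p,q)$-representative $\mathbf u$ of $[u]$ with $\omega\wedge T\wedge\mathbf u\equiv0$ pointwise. Choose in Timorin's notation $\alpha_m:=\omega$ and $\alpha_0=\cdots=\alpha_{m-1}:=\omega$, so that $T_{m+1}=T$ and $T_m=\omega\wedge T$; then $\mathbf u(x)$ is $T_m$-primitive for every $x$, and Timorin's mixed Hodge--Riemann relation gives $(-1)^{m+\cdots+1}\,T\wedge\mathbf u\wedge\overline{J\mathbf u}(x)\geq0$, strictly where $\mathbf u(x)\neq0$. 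Integrating and using $[u]\neq0$ (hence $\mathbf u\not\equiv0$) gives $Q([u])>0$. Thus everything reduces to producing such a representative.

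To produce it I would use $T$-Hodge theory. Let $\mathbf u_0$ be the $d$-harmonic representative of $[u]$ (Hodge--Dolbeault--de Rham theorem); it is of pure type $(p,q)$, and $w_0:=T\wedge\mathbf u_0$ lies in $V_T^m$. The $T$-primitivity of $[u]$ says that $Lw_0=\omega\wedge T\wedge\mathbf u_0\in V_T^{m+2}$ is $d$-exact as a form on $X$ (again by the $\partial\dbar$-lemma). Since $m+2\neq m$, Theorems~\ref{th:T-BKN} and~\ref{th:elliptic} make $\Box_d$ elliptic on $V_T^{m+2}$, so there is a decomposition $Lw_0=h+d\psi$ with $h$ harmonic in $V_T$ and $\psi:=d^*G_d(Lw_0)\in V_T^{m+1}$, $G_d$ the Green operator. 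The key point is that $h=0$: from $d^*h=0$ and the identity $d^*=-*d*$ on $V_T$ (consequence of $\dbar^*=-*\partial*$) we get $d(*h)=0$; writing $*h=T\wedge\eta$ with $*h\in V_T^{m-2}$---legitimate, as $f_T$ is injective in degrees $\leq m$ by Timorin's mixed hard Lefschetz theorem---we obtain $T\wedge d\eta=0$ with $\deg d\eta=m-1<m$, hence $d\eta=0$ by the same injectivity; finally $h$ is $d$-exact on $X$, say $h=d\gamma$, so $(h,h)_T=\pm\int_X d\gamma\wedge\overline\eta=\mp\int_X\gamma\wedge\overline{d\eta}=0$ and $h=0$. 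Therefore $Lw_0=d\psi$. By Theorem~\ref{th: T-HL} the map $L\colon V_T^{m-1}\to V_T^{m+1}$ is a fibrewise isomorphism, so $\phi:=L^{-1}\psi\in V_T^{m-1}$ is smooth; writing $\phi=T\wedge\phi_0$, the form $\mathbf u_1:=\mathbf u_0-d\phi_0$ is $d$-closed, cohomologous to $\mathbf u_0$, and $\omega\wedge T\wedge\mathbf u_1=Lw_0-d(\omega\wedge\phi)=Lw_0-d\psi=0$ pointwise; as $\omega\wedge T$ has pure type, every bidegree component of $\mathbf u_1$ is pointwise $(\omega\wedge T)$-primitive.

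The residual point---which I expect to be the main obstacle---is that $\mathbf u_1$ is in general of mixed type, its $(p,q)$-component need not be $d$-closed, and the $(p,q)$-component of the correction $d\phi_0$ is not pointwise primitive, so $Q([u])$ is not directly a sum of nonnegative Timorin pairings. I would close the gap by deformation: set $\alpha_j^{(s)}:=(1-s)\alpha_j+s\hat\omega$ for a fixed K\"ahler form $\hat\omega$, $T^{(s)}:=\alpha_{m+1}^{(s)}\wedge\cdots\wedge\alpha_n^{(s)}$, and view $Q^{(s)}$ as a Hermitian form on the fixed space $H^{p,q}(X,\C)$. By the mixed hard Lefschetz theorem in cohomology---itself a consequence of Timorin's theorem and of the $T$-Hodge theory at the non-middle degrees---cup product with $[T^{(s)}]$ is an isomorphism onto $H^{n-q,n-p}(X,\C)$; hence $Q^{(s)}$ is non-degenerate, the space $P^{(s)}$ of $T^{(s)}$-primitive classes has dimension $h^{p,q}-h^{p-1,q-1}$ independent of $s$, and $Q^{(s)}$ restricts non-degenerately to $P^{(s)}$. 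The signature of $Q^{(s)}|_{P^{(s)}}$ is then locally constant in $s$; at $s=1$ it is positive-definite by the classical Hodge--Riemann relations (Theorem~\ref{th: star-pri} plus compact-K\"ahler harmonic theory), so it is positive-definite for all $s$, in particular at $s=0$, which is the claim. The genuinely hard ingredient is the cohomological mixed hard Lefschetz statement: proving it by $T$-Hodge theory requires circumventing exactly the failure of ellipticity of $\Box_d$ on $V_T^m$ from Theorem~\ref{th:elliptic}, by pushing the relevant identities to degree $m+2$ with $L$ and descending again via Timorin's pointwise hard Lefschetz.
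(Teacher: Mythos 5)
Your strategy diverges from the paper's, and it contains a genuine gap at precisely the step you yourself flag as ``the genuinely hard ingredient'': the cohomological mixed hard Lefschetz theorem. The whole deformation argument --- non-degeneracy of $Q^{(s)}$ on $H^{p,q}$, constancy of $\dim P^{(s)}$, non-degeneracy of $Q^{(s)}|_{P^{(s)}}$, hence local constancy of the signature --- stands or falls with the assertion that cup product with $[T^{(s)}]$ (and with $[T^{(s)}\wedge\omega^{(s)}]$, which you also need to control $\dim P^{(s)}$) is an isomorphism on cohomology for every $s$. This does \emph{not} follow from Timorin's pointwise theorem: pointwise injectivity of $u\mapsto T\wedge u$ says nothing about injectivity on de Rham classes, and the cohomological statement is of comparable depth to the theorem being proved (in the literature it is obtained by a simultaneous induction with the mixed Hodge--Riemann relations, which is exactly the circularity one must avoid here). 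Your one-sentence sketch of how it might be extracted from $T$-Hodge theory at non-middle degrees does not close this; as written the proof is incomplete. Your preliminary construction of $\mathbf u_1$ is internally consistent but, as you note, unusable for positivity because $d\phi_0$ destroys pure type, and it plays no role in your final argument.

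The paper avoids both problems and in doing so supplies the idea your construction is missing: arrange for the correction to be $\partial\dbar$-exact rather than $d$-exact. Starting from $\mathbf u\wedge T\wedge\omega=\dbar v$, it descends $v$ by $L^{-1}$ to $V_T^{m-1}$, applies the Hodge decomposition for $\Box_{\partial}$ there (elliptic since $m-1\neq m$, by Theorem \ref{th:elliptic}), and uses the $T$-K\"ahler identities (so that $\Box_\partial$ and $\partial^*\dbar\partial$ commute with $L$) to show the harmonic part contributes nothing and $\dbar v=\omega\wedge\dbar\partial\partial^*f'$. Writing $\partial^*f'=T\wedge g$, the representative $\mathbf u+\partial\dbar g$ is automatically of pure type $(p,q)$, $d$-closed, and pointwise $(T\wedge\omega)$-primitive, so Timorin's pointwise mixed Hodge--Riemann relation applies directly and Stokes' theorem finishes the proof --- no cohomological hard Lefschetz and no deformation of the $\alpha_j$ at the cohomological level are needed. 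If you wish to salvage your approach, the cleanest repair is to replace your $\Box_d$-decomposition on $V_T^{m+2}$ by this $\Box_\partial$-decomposition on $V_T^{m-1}$ and discard the deformation argument altogether.
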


\begin{proof}  The case $m=0$ is trivial. Assume that $m\geq 1$. Let $\mathbf u$ be a  $d$-closed representative of $[u]$. Since $[u]$ is $T$-primitive, there exists $v\in \mathcal A^{p+n-m+1, q+n-m}$ such that
$$
\mathbf u\wedge T \wedge \omega=\dbar v.
$$
Let us look at $v$ as an element in $V_T^{p+1, q}$. Timorin's mixed hard Lefschetz theorem gives $v'\in V_T^{p,q-1}$ such that
$$
v'\wedge\omega=v.
$$
By Theorem \ref{th:elliptic}, $\Box_\partial$ are elliptic on $V_T^k$ for every $k\neq m$. Thus the elliptic operator theory (see \cite{Guillemin-notes-e}) gives
$$
v'=v'_h +\Box_\partial  f', \ \ \ v'_h\in  V_T^{m-1}\cap \ker\Box_\partial , \ f'\in V_T^{m-1},
$$
The K\"ahler identity in $T$-Hodge theory implies that $\Box_\partial$ commutes with $L$ and $\Lambda$, thus
$$
v_h:=v'_h \wedge \omega\in V_T^{m+1}\cap \ker\Box_\partial, 
$$
and
$$
v=v_h +\Box_\partial f, \ \  f:=\omega\wedge f'.
$$ 
Since $\dbar v$ is $\partial$-closed, we have
$$
0=\partial\dbar v=\partial\partial^*\dbar \partial f=0.
$$
Thus 
$$
||\partial^*\dbar \partial f||^2=(\partial\partial^*\dbar \partial f, \dbar \partial f)=0,
$$
which gives $\partial^*\dbar \partial f=\dbar \partial^* \partial f=0$. The K\"ahler identity in $T$-Hodge theory implies 
$$
[\partial^*\dbar \partial, L]=0.
$$
Thus we have 
$$
\omega\wedge \dbar \partial^* \partial f'=0,
$$
which gives
$$
\dbar v=\omega \wedge \dbar v'=\omega \wedge \dbar\partial\partial^* f'.
$$
Let us write $\partial^* f'=T\wedge g$, thus
$$
(\mathbf u +\partial\dbar g)\wedge T \wedge \omega=0.
$$
Thus Timorin's mixed Hodge-Riemann bilinear relation gives
$$
\int_X (-1)^{m+\cdots+1} (\mathbf u +\partial\dbar g) \wedge \overline{J(\mathbf u +\partial\dbar g)} \wedge T\geq 0,
$$
where the equality holds iff $\mathbf u +\partial\dbar g\equiv 0$. Stokes' theorem implies
$$
\int_X (-1)^{m+\cdots+1} \mathbf u\wedge \overline{J\mathbf u} \wedge T > 0 =\int_X (-1)^{m+\cdots+1} (\mathbf u +\partial\dbar g) \wedge \overline{J(\mathbf u +\partial\dbar g)} \wedge T,
$$
thus the theorem follows. 
\end{proof}

\subsection{Curvature of higher direct images} 

We shall use the following setup:

\medskip

(1) $\pi: \mathcal X \to B$ is a proper holomorphic submersion from a complex manifold $\mathcal X$ to another complex manifold $B$, each fiber $X_t:=$ is an $n$-dimensional compact complex manifold; 

(2) $E$ is a holomorphic vector bundle over $\mathcal X$, $E_t:=E|_{X_t}$;

(3) $\omega$ is a smooth $(1,1)$-form on $\mathcal X$ that is positive on each fiber, $\omega^t:=\omega|_{X_t}$;

(4) $h_E$ is a smooth Hermitian metric on $E$, $h_{E_t}:=h_E|_{E_t}$.

\medskip

For each $t\in B$, let us denote by $\mathcal A^{p,q}(E_t)$ the space of smooth $E_t$-valued $(p,q)$-forms on $X_t$. Let us recall the following definition in \cite{BMW}:

\begin{definition} Let $V:=\{V_t\}_{t\in B}$ be a family of $\C$-vector spaces over $B$. Let $\Gamma$ be a $C^{\infty}(B)$-submodule of the space of all sections of $V$. We call $\Gamma$ a smooth quasi-vector bundle structure on $V$ if each vector of the fiber $V_t$ extends to a section in $\Gamma$ locally near $t$.
\end{definition}

Consider 
$$
\mathcal A^{p,q}:= \{\mathcal A^{p,q}(E_t)\}_{t\in B}.
$$
Denote by $\mathcal A^{p,q}(E)$ the space of smooth $E$-valued $(p,q)$-forms on $\mathcal X$. Let us define
$$
\Gamma^{p,q}:=\{u: t\mapsto u^t\in \mathcal A^{p,q}(E_t): \exists \ \mathbf u\in \mathcal A^{p,q}(E), \ \mathbf u|_{X_t}=u^t, \ \forall \ t\in B\}.
$$
We call $\mathbf u$ above a \emph{smooth representative} of $u\in \Gamma^{p,q}$. We know that each $\Gamma^{p,q}$ defines a quasi-vector bundle structure on $\mathcal A^{p,q}$.

\begin{definition}
Let $(V,\Gamma)=\oplus_{k=0}^{2n} (V^k,\Gamma^k)$ be a direct sum of quasi vector bundles over a smooth manifold $B$. Let $L$ be a section of ${\rm End}(V)$. We call $(V, \Gamma, L)$ a Lefschetz quasi vector bundle if
$$
L(\Gamma^l)\subset \Gamma^{l+2}, \ \forall \ 0\leq l\leq 2(n-1), \ L(\Gamma^{2n-1})=L(\Gamma^{2n})=0,
$$ 
and each $L^k: \Gamma^{n-k} \to \Gamma^{n+k}$, $0 \leq k\leq n$, is an isomorphism. 
\end{definition}

Same as before, one may define the Lefschetz star operator and the $sl_2$-triple on a general Lefschetz quasi vector bundle. Consider
$$
(\mathcal A, \Gamma):=\oplus_{k=0}^{2n} (\mathcal A^k, \Gamma^k), \ (\mathcal A^k, \Gamma^k):=\oplus_{p+q=k} (\mathcal A^{p,q}, \Gamma^{p,q}) 
$$
and define $L\in {\rm End}(\mathcal A)$ such that
$$
Lu(t)=\omega^t \wedge u^t, \ \forall \ u\in \Gamma.
$$
Then the hard Lefschetz theorem implies that $(\mathcal A, \Gamma, L)$ is a Lefschetz quasi vector bundle. One may also define the notion of connection on a general quasi vector bundle, see \cite{BMW}. Thus our main theorem is still true for general Lefschetz quasi vector bundles. We shall use the following connection on $(\mathcal A, \Gamma)$.

\begin{definition} The Lie-derivative connection, say $ \nabla^{\mathcal A}$, on $(\mathcal A, \Gamma)$ is defined as follows:
$$
\nabla^{\mathcal A} u:=\sum dt^j \otimes [d^E, \delta_{V_j}] \mathbf u+ \sum d\bar t^j \otimes [d^E, \delta_{\bar V_j}] \mathbf u, \ u\in \Gamma,
$$
where $d^E:=\dbar+\partial^E$ denotes the Chern connection on $(E, h_E)$ and each $V_j$ is the horizontal lift of $\partial /\partial t^j$ with respect to $\omega$.
\end{definition}

Our main theorem implies:

\begin{theorem}\label{th:star-commute} If $d\omega\equiv 0$ then the Lie-derivative connection $\nabla^{\mathcal A}$ commutes with the Lefschetz star operator $*_s$ on the Lefschetz quasi vector bundle $(\mathcal A, \Gamma, L)$.
\end{theorem}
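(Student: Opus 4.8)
The plan is to reduce Theorem~\ref{th:star-commute} to Theorem~\ref{th:main} (in its quasi-vector-bundle form) by checking the single hypothesis of that theorem, namely that $[L,\theta]=0$ where $\theta:=[\nabla^{\mathcal A},L]$. Once $[L,\theta]=0$ is established, Theorem~\ref{th:main} gives $*_s^{-1}\nabla^{\mathcal A}*_s=\nabla^{\mathcal A}+[\Lambda,\theta]$, so it remains to see that $\theta=0$ when $d\omega\equiv 0$; this immediately yields $*_s^{-1}\nabla^{\mathcal A}*_s=\nabla^{\mathcal A}$, i.e. that $\nabla^{\mathcal A}$ commutes with $*_s$. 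So the whole theorem comes down to the identity $\theta=0$, equivalently $[\nabla^{\mathcal A},L]=0$ on $(\mathcal A,\Gamma)$ under the assumption $d\omega\equiv 0$.

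The key step is therefore to compute $[\nabla^{\mathcal A},L]$ fiberwise. Writing $\nabla^{\mathcal A}=\sum dt^j\otimes[d^E,\delta_{V_j}]+\sum d\bar t^j\otimes[d^E,\delta_{\bar V_j}]$ and $Lu(t)=\omega^t\wedge u^t$, I would work componentwise: for a smooth representative $\mathbf u\in\mathcal A^{p,q}(E)$ of $u\in\Gamma$, the $dt^j$-component of $\theta u$ is
$$
\big([d^E,\delta_{V_j}](\omega\wedge \mathbf u)-\omega^t\wedge[d^E,\delta_{V_j}]\mathbf u\big)\big|_{X_t}.
$$
Since $d^E$ is a derivation and $d^E\omega=d\omega=0$ (here $\omega$ is of type $(1,1)$ and we only need its restriction, but $d\omega\equiv 0$ globally is assumed), and since the Lie derivative / contraction operators $\delta_{V_j}$ also act as (anti)derivations, a Cartan-type computation shows that the commutator $[d^E,\delta_{V_j}]$ (the Lie derivative along $V_j$ coupled to $E$) satisfies the Leibniz rule with respect to wedging by $\omega$, and produces exactly the extra term $(\mathcal L_{V_j}\omega)\wedge\mathbf u$. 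Restricting to $X_t$, one uses that $\mathcal L_{V_j}\omega=d(\delta_{V_j}\omega)+\delta_{V_j}(d\omega)=d(\delta_{V_j}\omega)$, and that the restriction of $d(\delta_{V_j}\omega)$ to the fiber vanishes because $V_j$ is the horizontal lift with respect to $\omega$ (so $\delta_{V_j}\omega$ vanishes on vertical vectors, and the fiberwise exterior derivative of the zero form is zero); the same argument handles the $d\bar t^j$-components with $\bar V_j$. Hence $\theta u|_{X_t}=0$, i.e. $\theta=0$.

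The main obstacle I expect is the bookkeeping in the previous step: verifying that $[d^E,\delta_{V_j}]$ really is the $E$-coupled Lie derivative along $V_j$ and that it interacts with $L$ precisely through $\mathcal L_{V_j}\omega$, together with the careful check that the fiber restriction of $\mathcal L_{V_j}\omega$ vanishes. This is where the hypotheses $d\omega\equiv 0$ and ``$V_j$ is the horizontal lift with respect to $\omega$'' are both genuinely used, and where one must be careful that only the fiberwise behavior of the forms matters (the non-fiber components of $\mathcal L_{V_j}\omega$ need not vanish, but they are killed upon restriction to $X_t$ after wedging with the vertical form $\mathbf u|_{X_t}$). Once this identity is in hand, the conclusion is immediate: $\theta=0$ trivially gives $[L,\theta]=0$, Theorem~\ref{th:main} applies, and $*_s^{-1}\nabla^{\mathcal A}*_s=\nabla^{\mathcal A}+[\Lambda,0]=\nabla^{\mathcal A}$, which is exactly the assertion that $\nabla^{\mathcal A}$ commutes with $*_s$.
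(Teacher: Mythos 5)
Your proposal is correct and follows essentially the same route as the paper: both reduce the statement to showing $\theta=[\nabla^{\mathcal A},L]=0$ so that Theorem~\ref{th:main} applies trivially, and both establish this from $d\omega=0$ together with the vanishing of $(V_j\rfloor\omega)|_{X_t}$ for the horizontal lifts. The only cosmetic difference is that you phrase the key computation via Cartan's formula $\mathcal L_{V_j}\omega=d(V_j\rfloor\omega)+V_j\rfloor d\omega$, whereas the paper invokes the graded Jacobi identity for $[[d^E,\delta_{V_j}],L]$ — these are the same calculation.
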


\begin{proof} By our main theorem, it suffices to prove $[\nabla^{\mathcal A}, L]=0$, i.e. $[[d^E, \delta_{V_j}], L]=0$. Notice that $[d^E, L]=d\omega$ and
$$
 [\delta_{V_j}, L]|_{X_t}=(V_j \rfloor \omega)|_{X_t}\equiv 0,
$$
since each $V_j$ is horizontal. By the Jacobi identity, $d\omega=0$ gives $[\nabla^{\mathcal A}, L]=0$.
\end{proof}

For bidegree reason, the connection, say $D^{\mathcal A}$, on each $(V^{p,q}, \Gamma^{p,q})$ induced by $\nabla^{\mathcal A}$ satisfies
$$
D^{\mathcal A} u:=\sum dt^j \otimes [\partial^E, \delta_{V_j}] \mathbf u+ \sum d\bar t^j \otimes [\dbar, \delta_{\bar V_j}] \mathbf u, \ u\in \Gamma^{p,q}.
$$
Thus
$$
(\nabla^{\mathcal A}-D^{\mathcal A}) u=\sum dt^j \otimes [\dbar, \delta_{V_j}] \mathbf u+ \sum d\bar t^j \otimes [\partial^E, \delta_{\bar V_j}] \mathbf u.
$$
Put
$$
\kappa_j u:= [\dbar, \delta_{V_j}] \mathbf u, \ \kappa_{\bar j} u:= [\partial^E, \delta_{\bar V_j}] \mathbf u.
$$
Then 
$$
\nabla^{\mathcal A}-D^{\mathcal A}=\sum dt^j \otimes \kappa_j + \sum d\bar t^j \otimes \kappa_{\bar j}.
$$

\begin{definition} We call 
$$
\kappa_j u:= [\dbar, \delta_{V_j}] \mathbf u, 
$$
the non-cohomological Kodaira-Spencer action of $\kappa_j$ on $u\in \Gamma$.
\end{definition}

The fiberwise Hodge star operator $*$ equals $*_s\circ J$, recall that $J$ is the Weil-operator 
$$
Ju=i^{p-q} u, \ u\in \Gamma^{p,q}.
$$
Thus Theorem \ref{th:star-commute} implies:

\begin{proposition} If $d\omega\equiv 0$ then  $D^{\mathcal A}*=*D^{\mathcal A}, \kappa_j *=-*\kappa_j$ and $\kappa_{\bar j} *=-*\kappa_{\bar j}$.
\end{proposition}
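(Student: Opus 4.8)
The plan is to deduce all three identities from Theorem \ref{th:star-commute}, which gives $*_s\nabla^{\mathcal A}=\nabla^{\mathcal A}*_s$ on $(\mathcal A,\Gamma,L)$, by keeping track of fiber bidegrees and then conjugating by the Weil operator $J$. Recall that fiberwise $*=*_s\circ J=J\circ *_s$, that $*_s$ and $*$ send $\Gamma^{p,q}$ to $\Gamma^{n-q,n-p}$, and that $J$ acts on $\Gamma^{p,q}$ as the constant scalar $i^{p-q}$; in particular $*_s$ and $*$ act only on the fiber variables and commute with the base one-forms $dt^j,d\bar t^j$.

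First I would split $\nabla^{\mathcal A}$ into its pure-bidegree-shift parts,
$$
\nabla^{\mathcal A}=D^{\mathcal A}+\sum_j dt^j\otimes\kappa_j+\sum_j d\bar t^j\otimes\kappa_{\bar j},
$$
where $D^{\mathcal A}$ preserves fiber bidegree, $\kappa_j$ shifts $(p,q)\mapsto(p-1,q+1)$, and $\kappa_{\bar j}$ shifts $(p,q)\mapsto(p+1,q-1)$. Applied to a form in $\Gamma^{p,q}$, $*_s\nabla^{\mathcal A}$ produces terms of fiber bidegree $(n-q,n-p)$, $(n-q-1,n-p+1)$ and $(n-q+1,n-p-1)$ from the three parts, and $\nabla^{\mathcal A}*_s$ produces terms of exactly the same three (distinct) bidegrees. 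Comparing the components landing in each bidegree, and using the linear independence of the $dt^j$ and $d\bar t^j$, the identity $*_s\nabla^{\mathcal A}=\nabla^{\mathcal A}*_s$ forces
$$
*_sD^{\mathcal A}=D^{\mathcal A}*_s,\qquad *_s\kappa_j=\kappa_j*_s,\qquad *_s\kappa_{\bar j}=\kappa_{\bar j}*_s
$$
for every $j$.

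Next I would pass from $*_s$ to $*=J\circ*_s$. Since $D^{\mathcal A}$ preserves fiber bidegree and $J$ is a constant scalar on each $\Gamma^{p,q}$, we have $JD^{\mathcal A}=D^{\mathcal A}J$, hence $*D^{\mathcal A}=J*_sD^{\mathcal A}=JD^{\mathcal A}*_s=D^{\mathcal A}J*_s=D^{\mathcal A}*$. For the Kodaira--Spencer actions, $\kappa_j$ changes $p-q$ by $-2$ and $\kappa_{\bar j}$ changes $p-q$ by $+2$, while $i^{\pm2}=-1$; therefore $J\kappa_j=-\kappa_jJ$ and $J\kappa_{\bar j}=-\kappa_{\bar j}J$, which, combined with $*_s\kappa_j=\kappa_j*_s$ and $*_s\kappa_{\bar j}=\kappa_{\bar j}*_s$, yields $*\kappa_j=J*_s\kappa_j=J\kappa_j*_s=-\kappa_jJ*_s=-\kappa_j*$ and likewise $*\kappa_{\bar j}=-\kappa_{\bar j}*$.

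The computation is essentially bidegree bookkeeping, so there is no deep obstacle; the one point that needs care is the sign $i^{\pm2}=-1$ coming from the interaction of $J$ with the $(\mp1,\pm1)$ bidegree shift of the Kodaira--Spencer actions, which is exactly what turns the $*_s$-commutation into a $*$-anticommutation. One should also confirm that reading Theorem \ref{th:star-commute} off component by component is legitimate, which it is because the three bidegree shifts $(0,0)$, $(-1,1)$, $(1,-1)$ land, after applying $*_s$, in three distinct fiber bidegrees.
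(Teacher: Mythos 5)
Your proof is correct and follows exactly the route the paper intends: the paper states this proposition as an immediate consequence of Theorem \ref{th:star-commute} without further argument, and your bidegree bookkeeping (splitting $\nabla^{\mathcal A}$ by the fiber bidegree shifts $(0,0)$, $(-1,1)$, $(1,-1)$ and then conjugating by $J$ to pick up the sign $i^{\pm 2}=-1$) is precisely the omitted verification.
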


\begin{theorem}\label{th:main-C} If $d\omega\equiv 0$ then $D^{\mathcal A}$ defines a Chern connection on each $(\mathcal A^{p,q}, \Gamma^{p,q})$ and each $\kappa_{\bar j}$ is the adjoint of $\kappa_j$.
\end{theorem}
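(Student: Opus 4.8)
The plan is to combine the standard characterisation of a Chern connection with a single differentiation‑under‑the‑fibre‑integral identity. On a Hermitian quasi vector bundle the Chern connection is the unique connection compatible with the metric whose $(0,1)$‑part is the ambient Cauchy--Riemann operator; for $(\mathcal A^{p,q},\Gamma^{p,q})$ the latter is, by its construction in \cite{BMW}, the operator $u\mapsto\sum d\bar t^j\otimes[\dbar,\delta_{\bar V_j}]\mathbf u$ (one checks there that it is well defined on $\Gamma^{p,q}$ and squares to zero, using only the integrability of the complex structure of $\mathcal X$). Since this is exactly the $(0,1)$‑part of the formula defining $D^{\mathcal A}$, it remains to show that $D^{\mathcal A}$ is compatible with the fibrewise $L^2$‑metric $(u,v)(t)=\int_{X_t}\{\mathbf u,*\mathbf v\}_{h_E}$ and that $\kappa_{\bar j}$ is the adjoint of $\kappa_j$; I would read off both from one formula.

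To obtain that formula I differentiate $(u,v)(t)$ along $\partial/\partial t^j$. By Cartan's formula and Stokes' theorem on the compact fibre,
$$
\frac{\partial}{\partial t^j}(u,v)=\int_{X_t}\bigl(\delta_{V_j}\,d\{\mathbf u,*\mathbf v\}_{h_E}\bigr)\big|_{X_t},
$$
since the $d\delta_{V_j}$‑part of $\mathcal L_{V_j}$ is fibrewise exact. Expanding $d\{\mathbf u,*\mathbf v\}_{h_E}$ by the metric compatibility of the Chern connection $d^E=\dbar+\partial^E$ of $(E,h_E)$, contracting with $V_j$ as a graded derivation (so that $d^E$ is replaced by $\delta_{V_j}$ on the first slot and by $\delta_{\bar V_j}$ on the second), and discarding the remaining fibrewise‑exact terms, one is left with
$$
\frac{\partial}{\partial t^j}(u,v)=\int_{X_t}\{[d^E,\delta_{V_j}]\mathbf u,*\mathbf v\}_{h_E}+\int_{X_t}\{\mathbf u,[d^E,\delta_{\bar V_j}](*\mathbf v)\}_{h_E}.
$$
Here $d\omega\equiv0$ enters through the Proposition preceding the theorem: $D^{\mathcal A}*=*D^{\mathcal A}$ and $\kappa_{\bar j}*=-*\kappa_{\bar j}$ give $*^{-1}[d^E,\delta_{\bar V_j}]*=D^{\mathcal A}_{\bar j}-\kappa_{\bar j}$ with $D^{\mathcal A}_{\bar j}:=[\dbar,\delta_{\bar V_j}]$, while $[d^E,\delta_{V_j}]=D^{\mathcal A}_j+\kappa_j$ with $D^{\mathcal A}_j:=[\partial^E,\delta_{V_j}]$, so
$$
\frac{\partial}{\partial t^j}(u,v)=(D^{\mathcal A}_j u,v)+(u,D^{\mathcal A}_{\bar j}v)+(\kappa_j u,v)-(u,\kappa_{\bar j}v),
$$
where $(\cdot,\cdot)$ is the fibrewise inner product, and symmetrically for $\partial/\partial\bar t^j$.

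The two assertions follow by bidegree bookkeeping, using that the $L^2$‑metric is the orthogonal sum of the metrics on the $\mathcal A^{p,q}$ while $\kappa_j$ and $\kappa_{\bar j}$ shift the fibre bidegree by $(-1,+1)$ and $(+1,-1)$ respectively. When $u,v\in\Gamma^{p,q}$ the last two terms vanish, so the formula and its $\partial/\partial\bar t^j$ counterpart combine into $d(u,v)=(D^{\mathcal A}u,v)+(u,D^{\mathcal A}v)$; together with the $(0,1)$‑part already identified, this shows $D^{\mathcal A}$ is the Chern connection. Applying the formula instead to $u\in\Gamma^{p,q}$ and $w\in\Gamma^{p-1,q+1}$, for which $(u,w)\equiv0$ and $(D^{\mathcal A}_j u,w)=(u,D^{\mathcal A}_{\bar j}w)=0$ by bidegree, leaves $(\kappa_j u,w)=(u,\kappa_{\bar j}w)$, i.e. $\kappa_{\bar j}=\kappa_j^*$.

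The main obstacle is the second displayed formula: one must carry out the graded‑derivation bookkeeping carefully enough to see that the $d^E$‑derivatives genuinely recombine into $[d^E,\delta_{V_j}]$‑ and $[d^E,\delta_{\bar V_j}]$‑actions modulo terms $d$‑exact along $X_t$, and one should check that the sign in $\kappa_{\bar j}*=-*\kappa_{\bar j}$ is precisely what produces $\kappa_{\bar j}=+\kappa_j^*$ rather than $-\kappa_j^*$. If one prefers, this identity (metric compatibility of the Lie‑derivative connection restricted to each $\mathcal A^{p,q}$) can be quoted from \cite{BMW}; everything else above is routine.
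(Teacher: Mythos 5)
Your proposal is correct and follows essentially the same route as the paper: both arguments rest on the preceding proposition ($D^{\mathcal A}*=*D^{\mathcal A}$, $\kappa_j*=-*\kappa_j$, $\kappa_{\bar j}*=-*\kappa_{\bar j}$), on the metric compatibility of the full Lie-derivative connection (which the paper, like you, ultimately defers to \cite{BMW}), and on bidegree bookkeeping to isolate the $D^{\mathcal A}$-terms for metric compatibility and the $\kappa$-terms for the adjoint identity $(\kappa_j u,v)=(u,\kappa_{\bar j}v)$. You merely spell out the Cartan--Stokes differentiation under the fibre integral that the paper leaves implicit.
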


\begin{proof} \emph{First part}: Since the metric on $\mathcal A^{p,q}$ is defined by
$$
(u, v)=\int_{X_t} \{u, * v\},
$$
thus the above proposition implies that $D^{\mathcal A}$ preserves the metric. The fact that the square of the $(0,1)$-part of $D^{\mathcal A}$ vanishes follows from the usual Lie derivative identity, see \cite{BMW} for the details. Thus $D^{\mathcal A}$ is a Chern connection.

\medskip

\emph{Second part}: Let $u\in\Gamma^{p,q}$, $v\in \Gamma^{p-1,q+1}$, then for bidegree reason, we have
$$
0=\partial/\partial t^j (u, v)=(\kappa_j u,v)+ (u, *^{-1} \kappa_{\bar j} * v),
$$
which gives $(\kappa_j u,v)= (u,  \kappa_{\bar j} v)$ by the above proposition.
\end{proof}

\textbf{Remark}: One may also prove the above theorem by a direct computation without using the Hodge star operator, see \cite{Naumann}. For other related results on the Lie-derivative connection, see \cite{Bern11}, \cite{GS15}, \cite{LiuYang13}, \cite{Maitani84}, \cite{MT07}, \cite{MT08}, \cite{Sch12}, \cite{Sch14}, \cite{TY15}.

\medskip

The curvature of the Lie-derivative connection is 
$$
(\nabla^{\mathcal A})^2 =\sum [ [d^E, \delta_{V_j}],  [d^E, \delta_{\bar V_k}]] dt^j \wedge d\bar t^k.
$$
For bidegree reason, we have
\begin{equation}\label{eq:curvature-C}
(D^{\mathcal A})^2= (\nabla^{\mathcal A})^2- \sum [\kappa_j, \kappa_{\bar k}]  dt^j \wedge d\bar t^k.
\end{equation}
One may get a curvature formula of the higher direct image bundles using the above formula and the sub-bundle-quotient-bundle curvature formula, see \cite{BMW} for the details.

\begin{definition} We call $u\in \Gamma^{p,q}$ a holomorphic section of $\mathcal A^{p,q}$ if each
$$
[\dbar, \delta_{\bar V_j}] \mathbf u =0,
$$
on fibers, i.e. the $(0,1)$-part of $D^{\mathcal A}$ vanishes on $u$.
\end{definition}

Theorem \ref{th:main-C} and \eqref{eq:curvature-C} give:

\begin{proposition} If $d\omega=0$ and $u$ is a holomorphic section of $\mathcal A^{p,q}$ then 
$$
i\partial\dbar ||u||^2 \geq -i ((\nabla^{\mathcal A})^2 u, u)+ i\sum ((\kappa_k^*u, \kappa_j^*u)-(\kappa_j u, \kappa_k u)) dt^j  \wedge d\bar t^k.
$$
\end{proposition}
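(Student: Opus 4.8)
The plan is to combine the classical curvature identity for the squared $L^2$-norm of a holomorphic section of a Hermitian (quasi) vector bundle with the curvature formula \eqref{eq:curvature-C} and the adjointness statement of Theorem \ref{th:main-C}.

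First, since $d\omega\equiv 0$, Theorem \ref{th:main-C} identifies $D^{\mathcal A}$ with the Chern connection of the $L^2$-metric on $(\mathcal A^{p,q},\Gamma^{p,q})$, so its $(0,1)$-part is the Cauchy--Riemann operator of the quasi vector bundle. By hypothesis $u$ is holomorphic, i.e. this $(0,1)$-part annihilates $u$, hence $D^{\mathcal A}u=D'^{\mathcal A}u$ with $D'^{\mathcal A}u:=\sum dt^j\otimes[\partial^E,\delta_{V_j}]\mathbf u$ of pure type $(1,0)$. I would then invoke the standard identity, valid for any Chern connection and any holomorphic section,
$$
i\partial\dbar\,||u||^2=-i\,((D^{\mathcal A})^2u,u)+i\,(D'^{\mathcal A}u,D'^{\mathcal A}u),
$$
which is obtained by differentiating $(u,u)=\int_{X_t}\{u,*u\}$: metric compatibility together with $D''^{\mathcal A}u=0$ gives $\partial\,||u||^2=(D'^{\mathcal A}u,u)$, and applying $\dbar$ produces the curvature term plus $i\,(D'^{\mathcal A}u,D'^{\mathcal A}u)=i\sum([\partial^E,\delta_{V_j}]\mathbf u,[\partial^E,\delta_{V_k}]\mathbf u)\,dt^j\wedge d\bar t^k$, a nonnegative $(1,1)$-form on $B$ since its coefficient matrix is a Gram matrix. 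Discarding this nonnegative term yields $i\partial\dbar\,||u||^2\geq -i\,((D^{\mathcal A})^2u,u)$.

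Next I would substitute \eqref{eq:curvature-C}, namely $(D^{\mathcal A})^2=(\nabla^{\mathcal A})^2-\sum[\kappa_j,\kappa_{\bar k}]\,dt^j\wedge d\bar t^k$, to obtain
$$
-i\,((D^{\mathcal A})^2u,u)=-i\,((\nabla^{\mathcal A})^2u,u)+i\sum([\kappa_j,\kappa_{\bar k}]u,u)\,dt^j\wedge d\bar t^k .
$$
By Theorem \ref{th:main-C} each $\kappa_{\bar k}$ is the adjoint of $\kappa_k$, so
$$
([\kappa_j,\kappa_{\bar k}]u,u)=(\kappa_j\kappa_k^*u,u)-(\kappa_k^*\kappa_ju,u)=(\kappa_k^*u,\kappa_j^*u)-(\kappa_ju,\kappa_ku),
$$
and plugging this in gives exactly the asserted inequality.

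The algebra in the last two paragraphs is harmless bookkeeping; the one point that needs care is the first identity, i.e. justifying differentiation under the integral sign and the metric-compatibility of $D^{\mathcal A}$ in the quasi vector bundle framework of \cite{BMW} (this is precisely where $d\omega\equiv0$ enters, through Theorem \ref{th:main-C}). Once that identity is established, the rest is a direct rewriting using \eqref{eq:curvature-C} and the adjointness $\kappa_{\bar k}=\kappa_k^*$.
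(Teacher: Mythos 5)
Your proposal is correct and is essentially the argument the paper intends: the paper simply asserts that the proposition follows from Theorem \ref{th:main-C} and \eqref{eq:curvature-C}, and you have filled in exactly that derivation — the Chern-connection identity $i\partial\dbar\|u\|^2=-i((D^{\mathcal A})^2u,u)+i(D'^{\mathcal A}u,D'^{\mathcal A}u)$ for a holomorphic section, dropping the nonnegative Gram term, substituting the curvature decomposition, and using $\kappa_{\bar k}=\kappa_k^*$ to rewrite the commutator. The sign and adjointness bookkeeping all checks out.
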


We shall show how to use the above proposition in a future publication \cite{Wang-notes}.

\end{document}